\documentclass[11pt,letterpaper]{article}

\usepackage[utf8]{inputenc}
\usepackage{comment}
\usepackage{microtype}
\usepackage{graphicx}
\usepackage{subfig}
\usepackage{booktabs} 
\usepackage{scalerel}
\usepackage[margin=1in]{geometry}
\usepackage[style=alphabetic, maxbibnames=15, maxcitenames=15, natbib=true,
  maxalphanames=10, backend=biber, sorting=nty, backref=true]{biblatex}
\addbibresource{bilevel_condensed.bib}
\DefineBibliographyStrings{english}{backrefpage = {page},backrefpages = {pages},}

\usepackage{amssymb}
\usepackage{amsthm}
\usepackage{amsmath}
\usepackage{mathtools}

\usepackage{nicefrac}

\usepackage{algorithmic}
\usepackage{algorithm}
\usepackage[shortlabels]{enumitem}

\usepackage{url}
\usepackage{float}

\setlength{\parskip}{0.5em}
\setlength{\parindent}{0em}

\usepackage{pifont}
\usepackage{hhline}
\usepackage{multirow}

\usepackage{graphicx,wrapfig}

\def\grad{\nabla}

\def\bb{\mathbf{b}}

\def\bd{\mathbf{d}}

\def\bs{\mathbf{s}}

\def\bv{\mathbf{v}}
\def\bw{\mathbf{w}}
\def\bx{\mathbf{x}}  
\def\by{\mathbf{y}}

\def\bA{\mathbf{A}}

\def\bC{\mathbf{C}}

\def\bH{\mathbf{H}}

\def\bJ{\mathbf{J}}

\def\bL{\mathbf{L}}
\def\bM{\mathbf{M}}

\def\bY{\mathbf{Y}}
\def\bX{\mathbf{X}}
\def\bV{\mathbf{V}}

\def\cD{\mathcal{D}}

\def\cG{\mathcal{G}}

\def\cO{\mathcal{O}}
\def\cP{\mathcal{P}}

\def\cR{\mathcal{R}}

\def\cX{\mathcal{X}}

\def\mE{\mathbb{E}}

\def\smskip{\smallskip}

\def\texitem#1{\par\smskip\noindent\hangindent 25pt
               \hbox to 25pt {\hss #1 ~}\ignorespaces}

\def\abs#1{\left|#1\right|}
\def\norm#1{\left\|#1\right\|}

\newcommand{\BEAS}{\begin{eqnarray*}}
\newcommand{\EEAS}{\end{eqnarray*}}
\newcommand{\BEA}{\begin{eqnarray}}
\newcommand{\EEA}{\end{eqnarray}}
\newcommand{\BEQ}{\begin{eqnarray}}
\newcommand{\EEQ}{\end{eqnarray}}
\newcommand{\BIT}{\begin{itemize}}
\newcommand{\EIT}{\end{itemize}}
\newcommand{\BNUM}{\begin{enumerate}}
\newcommand{\ENUM}{\end{enumerate}}

\newcommand{\BA}{\begin{array}}
\newcommand{\EA}{\end{array}}


\newcommand{\reals}{\mathbb{R}}






\DeclareMathOperator*{\argmin}{\arg\!\min}
\DeclareMathOperator*{\argmax}{\arg\!\max}





\def\blue#1{\textcolor{blue}{#1}}


\newif\ifpagenumbering
\pagenumberingtrue

\pagenumberingfalse

\newcommand{\na}[1]{\textcolor{black}{#1}}
\newcommand{\ey}[1]{\textcolor{black}{#1}}

\newtheorem{assumption}{Assumption}
\newtheorem{definition}{Definition}
\newtheorem{theorem}{Theorem}
\newtheorem{lemma}[theorem]{Lemma}

\newtheorem{corollary}[theorem]{Corollary}
\theoremstyle{remark}
\newtheorem{remark}{Remark}

\numberwithin{assumption}{section}
\numberwithin{definition}{section}
\numberwithin{theorem}{section}
\numberwithin{remark}{section}

\usepackage{hyperref}
\hypersetup{colorlinks,citecolor=blue,linktocpage,breaklinks=true}

\begin{document}

\title{An Inexact Conditional Gradient Method for
Constrained Bilevel Optimization
\vspace{5mm}
}

\author{Nazanin Abolfazli\thanks{Department of Systems and Industrial Engineering, The University of Arizona, Tucson, AZ, USA \qquad\{nazaninabolfazli@arizona.edu, erfany@arizona.edu\}}
\quad Ruichen Jiang\thanks{Department of Electrical and Computer Engineering, The University of Texas at Austin, Austin, TX, USA \qquad\{rjiang@utexas.edu, mokhtari@austin.utexas.edu\}} 
\quad Aryan Mokhtari$^\dagger$
\quad Erfan Yazdandoost Hamedani$^*$
\vspace{5mm}
}


\maketitle
\begin{abstract}

Bilevel optimization is an important class of optimization problems where one optimization problem is nested within another. 
While various methods have emerged to address unconstrained general bilevel optimization problems, there has been a noticeable gap in research when it comes to methods tailored for the constrained scenario. The few methods that do accommodate constrained problems, often exhibit slow convergence rates or demand a high computational cost per iteration.
To tackle this issue, our paper introduces a novel single-loop projection-free method employing a nested approximation technique. This innovative approach not only boasts an improved per-iteration complexity compared to existing methods but also achieves optimal convergence rate guarantees that match the best-known complexity of projection-free algorithms for solving convex constrained single-level optimization problems. In particular, when the hyper-objective function corresponding to the bilevel problem is convex, our method requires $\tilde{\cO}(\epsilon^{-1})$ iterations to find an $\epsilon$-optimal solution.  
Moreover,  when the hyper-objective function is non-convex, our method's complexity for finding an $\epsilon$-stationary point is  $\cO(\epsilon^{-2})$.  
To showcase the effectiveness of our approach, we present a series of numerical experiments that highlight its superior performance relative to state-of-the-art methods.
\end{abstract}

\section{Introduction}\label{sec:intro}

Many learning and inference problems take a \textit{hierarchical} form, where one
optimization problem is nested within another. Bilevel optimization is often used to model problems of this kind with two levels of hierarchy. In this paper, we consider the bilevel optimization problem of the form
\begin{equation}\label{eq:bilevel}
  \min_{\bx \in \mathcal{X}}~ \ell(\bx) := f(\bx,\by^*(\bx))\quad \hbox{s.t.}~  \by^*(\bx) \in \argmin_{\by\in \reals^{m}}~g(\bx,\by),
\end{equation}
where $n, m \geq 1$ are integers; $\mathcal{X}\subset \reals^n$ is a compact and convex set with diameter $D_{\cX}$, i.e., $\|\bx-\by\| \leq D_{\cX}$ for all $\bx,\by \in \cX$. Further,  $f : \mathcal{X} \times \reals^{m} \to \reals$ and
$g : \mathcal{X} \times \reals^{m} \to \reals$ are continuously differentiable functions with respect to $\bx$ and $\by$ on an open set containing $\cX$ and $\reals^m$, respectively.
Problem \eqref{eq:bilevel} involves two optimization problems following a two-level structure. The outer objective $f(\bx,\by^*(\bx))$ depends on $\bx$ both directly and also indirectly through $\by^*(\bx)$, which is a solution of the lower-level problem of minimizing another function $g$ parameterized by $\bx$. Throughout the paper, we assume that $g(\bx, \by)$ is strongly convex in $\by$, and hence $\by^*(\bx)$ is uniquely well-defined for any $\bx \in \cX$.
The application of \eqref{eq:bilevel} arises in a number of machine learning problems, such as meta-learning \citet{rajeswaran2019meta}, continual learning \citet{borsos2020coresets}, hyper-parameter optimization~\cite{franceschi2018bilevel,pedregosa2016hyperparameter}, and data hyper-cleaning \citet{shaban2019truncated}.

Numerous methods have emerged to tackle the general bilevel optimization problems described in \eqref{eq:bilevel}. For instance, previous works like \cite{hansen1992new,shi2005extended,moore2010bilevel} utilized the optimality conditions of the lower-level problem to transform the bilevel problem into a single-level constrained problem. However, this approach faces two significant challenges: $(i)$ The reduced problem becomes excessively constrained when dealing with large-scale inner problems; $(ii)$ Unless the lower-level function $g$ possesses a specific structure, such as a quadratic form, the optimality condition of the lower-level problem introduces nonconvexity into the feasible set of the reduced problem.

Recently, more efficient gradient-based methods for bilevel optimization have emerged, broadly categorized as the approximate implicit differentiation (AID) approach \cite{pedregosa2016hyperparameter,gould2016differentiating,domke2012generic,liao2018reviving,ghadimi2018approximation,lorraine2020optimizing} and the iterative differentiation (ITD) approach \cite{shaban2019truncated,maclaurin2015gradient,Franceschi_ICML18,grazzi2020iteration}. However, except for a few recent efforts {{that we discuss}}, most studies have primarily focused on analyzing asymptotic convergence, leaving room for the development of novel algorithms with guaranteed convergence rates.

Moreover, in most previous studies, it is assumed that $\cX = \reals^n$, simplifying the optimization problem to an unconstrained one. However, several practical applications, such as meta-learning \citet{franceschi2018bilevel}, personalized federated learning \citet{fallah2020personalized}, and coreset selection \citet{borsos2020coresets}, require $\cX$ to be a strict subset of $\reals^n$. 
When dealing with such constraint sets, a common approach is to employ projection-based methods like projected gradient methods. These techniques, while widely used, entail solving nonlinear projection problems on the constraint set, which may not always be computationally feasible.
The limitations of projection-based techniques has led to the development of projection-free algorithms, such as Frank Wolfe-based methods \citet{frank1956algorithm}. Unlike projection methods, which deal with non-linear projections like $\ell_1$-norm or nuclear norm ball constraints, Frank Wolfe-based techniques involve solving linear minimization problems over $\cX$, offering lower computational costs.

In the context of bilevel optimization problems, many studies have delved into constrained scenarios. However, most existing methods primarily rely on projection-based algorithms, with limited exploration of projection-free alternatives. Unfortunately, these methods often exhibit slow convergence rates or impose high computational burdens per iteration. Notably, the rapid convergence rates observed in methods like \citet{ghadimi2018approximation} are achieved by utilizing the Hessian inverse of the lower-level function, which comes at a steep price, imposing a worst-case computational cost of $\cO(m^3)$ and limiting its applicability. To address this issue, an approximation technique for the Hessian inverse was introduced in \citet{ghadimi2018approximation} and subsequently used in studies such as \cite{hong2020two,akhtar2022projection}. This approximation technique introduces a vanishing bias as the number of inner steps (matrix-vector products) increases, and its computational cost scales with the condition number ($\kappa_g$) of the lower-level problem, leading to a per-iteration complexity of $\cO(\kappa_g m^2\log(K))$.

\textbf{Contributions}.
To address these challenges, we introduce a novel inexact projection-free method that attains optimal convergence rates in the studied scenarios. Remarkably, our approach demands just two matrix-vector products per iteration, resulting in a per-iteration complexity of $\cO(m^2)$. 
Our main idea is to simultaneously track the trajectories of the lower-level optimal solution as well as
{the solution to a time-varying quadratic optimization problem}.
These estimators are calculated using one step of a gradient-type update and are used to estimate the hyper-gradient for a Frank Wolfe-type update. 
Furthermore, existing methods work under the assumption that $\grad_y f(x,\cdot)$ is uniformly bounded which may not hold in many applications. To address this limitation, we also analyze our proposed method without the gradient boundedness assumption.
Our theoretical guarantees for the proposed Inexact Bilevel Conditional Gradient (IBCG) method are as follows:
\begin{itemize}
  \item When the {hyper}-objective function $\ell(x)$ is convex, and $\grad_yf(\bx,\cdot)$ is uniformly bounded for any $\bx\in\cX$, our IBCG method finds an $\epsilon$-optimal solution after $\tilde{\cO}(\kappa_g^4\epsilon^{-1})$ iterations.  If we relax the assumption of gradient boundedness to assume Lipschitz continuity, its complexity becomes $\tilde{\cO}(\kappa_g^5\epsilon^{-1})$.
  \item When $\ell(x)$ is non-convex and $\grad_yf(\bx,\cdot)$ is bounded, IBCG requires $\cO(\kappa_g^4\epsilon^{-2})$ iterations to find an $\epsilon$-stationary point. This result changes to ${\cO}(\kappa_g^5\epsilon^{-2})$ when the gradient boundedness assumption is replaced by gradient Lipschitzness.
\end{itemize}

These results match the best-known complexity of projection-free algorithms for solving convex constrained single-level optimization problems \cite{lacoste2016convergence,mokhtari2018escape}. 



\begin{table*}[t!]\scriptsize
\renewcommand{\arraystretch}{1.3}
\centering
    \caption{Summary of results for bilevel optimization with a strongly convex lower-level function. The abbreviations ``C'', ``NC'', ``PO", ``LMO" stand for ``convex'', ``non-convex'', ``projection oracle", and ``linear minimization oracle" respectively and $\kappa_g\triangleq L_g/\mu_g$.  
$^{\dagger}$ We use $\emph{poly}(\kappa_g)$ as explicit dependency on $\kappa_g$ is not reported.
$^*$These works primarily examined convergence rates in the stochastic setting (no deterministic result), so the results presented here are for stochastic cases.
}
\label{tab:bilevel}
     \resizebox{\textwidth}{!}{%
\begin{tabular}{ccccccc}
\hline
 & \multirow{2}{*}{\textbf{Reference}} & \multirow{2}{*}{\textbf{Oracle}} & \textbf{Function $\blue{\ell}$} & {\textbf{Assumption on}} & \multirow{2}{*}{\textbf{Overall Complexity}} &\multirow{2}{*}{\textbf{Convergence metric}} \\  
 \cline{4-4} &    &    & NC/C  & $\grad_y f(\bx,\cdot)$  &  &  \\ \hline
\multicolumn{1}{c|}{\multirow{6}{*}{\textbf{Unconstrained}}}
& SUSTAIN$^*$ \cite{khanduri2021near}                            & -----                                    & NC      &bounded       & $\tilde{\cO}( \emph{poly}(\kappa_g) m^2 \epsilon^{-1.5})^{\dagger}$                          & $\mE\| \nabla \ell(\bx_{k^*})\|^2$                         \\ \cline{2-7} 
\multicolumn{1}{c|}{}                                                    & FSLA$^*$ \cite{li2022fully}                               & -----                                    & NC      &bounded       & $ \cO( \emph{poly}(\kappa_g) m^2 \epsilon^{-2})$                                             & $\mE\| \nabla \ell(\bx_{k^*})\|^2$                          \\ \cline{2-7} 
\multicolumn{1}{c|}{}    & AID-BiO \cite{ji2021bilevel}                     & -----                                    & NC      &bounded       & $\cO((\kappa_g^{3.5} m^2+m\kappa_g^4) \epsilon^{-1})$       & $\| \nabla \ell(\bx_{k^*})\|^2$                    \\
\cline{2-7} 
\multicolumn{1}{c|}{}    & F$^{3}$SA \cite{kwon2023fully}                               & -----                                    & NC    &bounded         & $\tilde{\cO}(\emph{poly}(\kappa_g) m \epsilon^{-1.5})$                                              & $\| \nabla \ell(\bx_{k^*})\|^2$                        \\
\cline{2-7} 
\multicolumn{1}{c|}{}                                                    & AmIGO \cite{arbel2021amortized}                     & -----                                    & NC      &bounded      & $\cO((\kappa_g^{4} m^2+m\kappa_g^4)\epsilon^{-1})$ & $\| \nabla \ell(\bx_{k^*})\|^2$                                    \\  \cline{2-7} 
\multicolumn{1}{c|}{}                                                    & RAHGD \cite{yang2023accelerating}                     & -----                                    & NC      &bounded      & $\cO(\kappa_g^{3.25}m^2\epsilon^{-0.875})$ & $\| \nabla \ell(\bx_{k^*})\|^2$                                      \\ \hline
\multicolumn{1}{c|}{\multirow{8}{*}{\textbf{Constrained}}}   &\multirow{2}{*}{ABA \cite{ghadimi2018approximation}}                & \multirow{2}{*}{PO}                     & NC      & \multirow{2}{*}{bounded}       & $\cO(\kappa_g^{4.5} m^3 \epsilon^{-1}+\kappa_g^5m\epsilon^{-1.25})$                           &  $\| \nabla \ell(\bx_{k^*})\|^2$                    \\
\multicolumn{1}{c|}{}                                                      &                                  &                                          & C              &        & $\cO(\kappa_g^{4.5} m^3 \epsilon^{-0.5}+\kappa_g^5m\epsilon^{-0.75})$ &
                  $\ell(\bx_K)-\ell(\bx^*)$              \\ \cline{2-7}
\multicolumn{1}{c|}{} 
& \multirow{2}{*}{TTSA$^*$ \cite{hong2020two}}                                & \multirow{2}{*}{PO}                                      & NC      & \multirow{2}{*}{bounded}       & $\tilde{\cO}(\emph{poly}(\kappa_g) m^2 \epsilon^{-2.5})$                                            &$\mathbb{E}{\| \bx_{k^*}-\hbox{prox}_{\rho\ell}(\bx_{k^*}) \|^2 }$                         \\
\multicolumn{1}{c|}{}                                                    &                                     &                                          & C              &                                         & $\tilde{\cO}(\emph{poly}(\kappa_g) m^2 \epsilon^{-4})$       &$\mathbb{E}{[\ell(\bx_K)- \ell(\bx^*)]}$  \\  \cline{2-7} 
\multicolumn{1}{c|}{}                                                     & \multirow{2}{*}{SBFW$^*$ \cite{akhtar2022projection}}                & \multirow{2}{*}{LMO}                     & NC      & \multirow{2}{*}{bounded}       & $\tilde{\cO}(\emph{poly}(\kappa_g) m^2 \epsilon^{-4})$  & $\mathbb{E}{[\cG(\bx_{k^*})]}$                 \\
\multicolumn{1}{c|}{}                                                      &                                  &                                          & C              &        & $\tilde{\cO}(\emph{poly}(\kappa_g) m^2 \epsilon^{-3})$       &$\mathbb{E}{[\ell(\bx_K)- \ell(\bx^*)]}$    \\ \cline{2-7} 
\multicolumn{1}{c|}{}                                                   & \multicolumn{1}{c|}{\multirow{4}{*}{\textbf{Ours}}}      & \multirow{2}{*}{LMO}                                      & NC      & \multirow{2}{*}{Lip. cont.}       & {{$\cO(\kappa_g^5 m^2 \epsilon^{-2})$}}                                            & $\mathcal{G}(\bx_{k^*})$                         \\
\multicolumn{1}{c|}{}                                                    &    \multicolumn{1}{c|}{}        &                                          & C                    &                                          & $ \tilde{\cO}(
\kappa_g^5 m^2 \epsilon^{-1})$                        & $\ell(\bx_K)- \ell(\bx^*)$ \\ 
\cline{3-7} 
\multicolumn{1}{c|}{}                                                   &   \multicolumn{1}{c|}{}  & \multirow{2}{*}{LMO}                                      & NC      & \multirow{2}{*}{bounded}       & {{$\cO(\kappa_g^4 m^2 \epsilon^{-2})$}}                                            & $\mathcal{G}(\bx_{k^*})$                         \\
\multicolumn{1}{c|}{}                                                    &    \multicolumn{1}{c|}{}        &                                          & C                    &                                          & $ \tilde{\cO}(
\kappa_g^4 m^2 \epsilon^{-1})$                        & $\ell(\bx_K)- \ell(\bx^*)$ \\ \hline
\end{tabular}
}
\end{table*}

\textbf{Related work.} 
In this section, we review related work on bilevel optimization with non-asymptotic guarantees; see Table~\ref{tab:bilevel} for a summary of these results. As mentioned earlier, the majority of the existing works consider unconstrained bilevel problems, i.e., $\mathcal{X} = \mathbb{R}^n$.  
In particular, \cite{khanduri2021near,li2022fully}
focused on the stochastic setting and proved an overall complexity of $\tilde{\mathcal{O}}(m^2 \epsilon^{-1.5})$ and ${\mathcal{O}}(m^2\epsilon^{-2})$, respectively. In the deterministic setting, \citet{yang2021provably} analyzed the convergence of bilevel algorithms via AID and proved a complexity of $\cO((\kappa_g^{3.5} m^2+m\kappa_g^4) \epsilon^{-1})$, where $\kappa_g$ denotes the condition number of the lower-level objective. By incorporating acceleration techniques, a recent work by \citet{yang2023accelerating} further improved the complexity to $\cO(\kappa_g^{3.25}m^2\epsilon^{-0.875})$. Moreover, to avoid expensive Hessian computation, 
\citet{kwon2023fully}  proposed a fully first-order method with a complexity of $\tilde{\cO}(m \epsilon^{-1.5})$. 

In comparison, there are relatively few works on constrained bilevel optimization problems, which is the considered setting of this paper. 
\citet{ghadimi2018approximation} presented an Accelerated Bilevel Approximation ({ABA}) method consisting of two iterative loops. In the nonconvex setting, they demonstrated that the method achieves an overall complexity of $\cO(\kappa_g^{4.5} m^3 \epsilon^{-1})$ and $\cO(\kappa_g^5 m \epsilon^{-1.25})$ in terms of the upper-level and lower-level objective values, respectively. In convex setting, they further shaved a factor of $\cO(\epsilon^{-0.5})$ from the complexities. 
However, their computational complexity is high since they must compute the Hessian inverse matrix at each iteration, incurring a per-iteration cost of~$\cO(m^3)$. 

Efforts have been made to design efficient single-loop methods aimed at lowering the per-iteration expenses. Notably, similar to our proposed IBCG method, approaches in \cite{dagreou2022framework, li2022fully} demand only two matrix-vector products per iteration. However, these works have exclusively addressed unconstrained bilevel problems.
Built upon the work of \citet{ghadimi2018approximation}, a Two-Timescale Stochastic Approximation (TTSA) algorithm has been proposed by \citet{hong2020two} for constrained bilevel problems in the stochastic setting, which is shown to achieve a complexity of $\tilde{\cO}(m^2\epsilon^{-2.5})$ and $\tilde{\cO}(m^2\epsilon^{-4})$ in nonconvex and convex settings, respectively. 
Concurrently, \citet{shen2023penalty} introduced a penalty-based bilevel gradient descent (PBGD) algorithm, categorizing it as a projection-based method. In their work, they focused on scenarios where the lower-level function satisfies the Polyak-Lojasiewicz condition.

It should be noted that the above methods require a projection onto the set $\cX$ at every iteration.  In contrast, our 
proposed method is projection-free and only requires access to a linear solver, which
is suitable for settings where projection is computationally costly; e.g., when $\cX$ is a nuclear-norm ball. 
A closely related study is the work by \citet{akhtar2022projection}, where they presenrted a projection-free algorithm named {SBFW} for \textit{stochastic} bilevel optimization problems. Their findings demonstrate that their method attains a complexity of $\cO(m^2\epsilon^{-4})$ for nonconvex scenarios and $\cO(m^2\epsilon^{-3})$ for convex settings, respectively.

Lastly, concurrent papers like \cite{liu2020generic,sow2022constrained,chen2023bilevel} explored scenarios where the lower-level problem can possess multiple minima. Given the increased complexity of this generalized setting, their theoretical results are comparatively weaker, offering either asymptotic  guarantees or slower convergence rates.

\section{Preliminaries}\label{sec:pre}
\subsection{Motivating Examples}\label{subsec:examples}
The bilevel optimization formulation in \eqref{eq:bilevel} finds applications in various ML problems, including matrix completion \citet{yokota2017simultaneous}, meta-learning \citet{rajeswaran2019meta}, data hyper-cleaning \citet{shaban2019truncated}, hyper-parameter optimization \citet{franceschi2018bilevel}, and more. Next, we delve into two specific examples.


\textbf{Matrix Completion with Denoising}: Consider the matrix completion problem, where the objective is to recover missing items from noisy observations of a subset of the matrix's entries. Typically, in noiseless scenarios, the data matrix can be represented as a low-rank matrix, justifying the use of the nuclear norm constraint. However, in applications such as image processing and collaborative filtering, noisy observations are common, and relying solely on the nuclear norm constraint can lead to suboptimal results \cite{mcrae2021low,yokota2017simultaneous}.
To address this issue, one approach to incorporate denoising into the matrix completion problem is by formulating it as a bilevel optimization problem \citet{akhtar2022projection}, expressed as:
\begin{align}\label{ex:Matrix_comp}
    &\min_{\|\bX \|_{*}\leq \alpha}~\frac{1}{\abs{\Omega_1}}\sum_{(i,j) \in \Omega_{1}}(\bX_{i,j} - \bY_{i,j})^2\\
    &\;\, \hbox{s.t.}\quad  \bY\in\argmin_{\bV}~\Big\{ \frac{1}{\abs{\Omega_2}}\sum_{(i,j) \in \Omega_{2}}(\bV_{i,j} - \bM_{i,j})^2  + \lambda_1 \cR(\bV) +\lambda_2\|\bX - \bV\|_F^2 \Big\} \nonumber,
\end{align}
where $\bM \in \reals^{n\times m}$ is the given incomplete noisy matrix. Moreover, $\Omega_1\subseteq\Omega$ and $\Omega_2\subseteq \Omega$ where $\Omega$ is the set of observable entries, and
$\cR(\bV)$ is a regularization term to induce sparsity, e.g., $\ell_1$-norm or pseudo-Huber loss, 
$\lambda_1$ and $\lambda_2$ are regularization parameters. 
The presence of the nuclear norm constraint poses a significant challenge in \eqref{ex:Matrix_comp}. This constraint renders the problem computationally demanding, often making projection-based algorithms impractical. Consequently, there is a compelling need to develop and employ projection-free methods to overcome these computational limitations. 

\textbf{Multi-Task Learning}:
Multi-Task Learning (MTL) is a machine learning paradigm that leverages insights from multiple related tasks to enhance overall performance and generalization. In essence, when faced with $T$ learning tasks, whether all or a subset are related, MTL seeks to jointly learn these tasks. This joint learning process aims to improve each task's model by tapping into the knowledge contained within all or some of the tasks. In the supervised setting, research has explored various approaches, including the Multi-task Feature Learning (MTFL) method \citet{zhang2014regularization}.
In MTFL, the goal is to minimize the overall loss function while learning the feature covariance matrix $\Omega\in\reals^{T\times T}$ for all the tasks. Considering a model parameter $W=[w_1,\hdots,w_T]\in\reals^{d\times T}$ where $w_i\in\reals^d$ denotes the model parameter for task $i$, \citet{zhang2014regularization} formulated this problem as $\min_{W,\Omega}\{\sum_{i=1}^T L_i(w_i)+\lambda\text{Tr}(W\Omega^{-1}W) \mid \Omega \succeq 0, \text{Tr}(\Omega) \leq c\}$ for some $c>0$ where $L_i(\cdot;\cD_i)$ denotes the loss function of task $i$ given dataset $\cD_i$ and $\text{Tr}(\cdot)$ denotes the trace of a square matrix. This problem can also be written as a bilevel optimization problem. To do so, consider $\cD_i^{tr}$ and $\cD_i^{val}$ as training and validation datasets for task $i$, respectively. In this problem, the upper-level problem seeks to identify the best covariance matrix by minimizing the loss function over the validation dataset, while the lower-level problem aims to find the best model for the training set given a covariance matrix $\Omega$. Specifically, for $\lambda_1,\lambda_2>0$, the MTLF problem can be written as 
\begin{align}\label{ex:MTL_bi}
  &\min_{\substack{\Omega \succeq 0 \\ \text{Tr}(\Omega) \leq c}} \: \sum_{i=1}^{T} L_i(w_i^*(\Omega);\cD_i^{val})  \\
  &\;\, \hbox{s.t.}\quad W^*(\Omega) \in \argmin_{W} \bigg\{\sum_{i=1}^{T} L_i(w_i;\cD_i^{tr}) + \lambda_1 \left\| W \right\|^2_F + \lambda_2 \text{Tr}(W \Omega^{-1} W^T)\bigg\},\nonumber
\end{align}
which is an instance of the constrained bilevel problem~\eqref{eq:bilevel}.

\subsection{Assumptions and Definitions}

In this subsection, we discuss the definitions and assumptions required throughout the paper. We begin by discussing the assumptions on the upper-level and lower-level objective functions, respectively.
\begin{assumption}\label{assum:upper}
    $\nabla_x f(\bx,\by)$ and $\nabla_y f(\bx,\by)$ are Lipschitz continuous w.r.t $(\bx,\by) \in \mathcal{X}\times \reals^{m}$ such that for any $ \bx,\bar\bx\in\cX$ and $\by,\bar\by\in\reals^m$ we have:
    \begin{enumerate}[(i)]
        \item $\!\!\!\|\nabla_{\!x} f( \bx, \by) - \nabla_{\!x} f(\Bar{\bx}, \bar\by) \| \leq  L_{xx}^f\| \bx - \Bar{\bx}\| + L_{xy}^f\|\bar\by-\by\|$, 
        \item $\!\!\!\|\nabla_{\!y} f( \bx, \by) - \nabla_{\!}y f(\Bar{\bx}, \bar\by) \| \leq  L_{yx}^f\| \bx - \Bar{\bx}\| + L_{yy}^f\|\by-\bar\by\|$.
    \end{enumerate}
\end{assumption}

\begin{assumption}\label{assum:lower} 
$g(\bx,\by)$ satisfies the following conditions:
    \begin{enumerate}[(i)]
        \item $\forall\bx \in \mathcal{X}$, $g(\bx,\cdot)$ is twice continuously differentiable. Moreover, $\grad_y g(\cdot,\cdot)$ is continuously differentiable.
        \item $\forall\bx\in \cX$, $\nabla_y g(\bx,\cdot)$ is Lipschitz continuous with constant $L_g \geq 0 $. Moreover, $\forall \by\in \reals^m$, $\nabla_y g(\cdot,\by)$ is Lipschitz continuous with constant $C^g_{yx} \geq 0 $.
        \item $\forall\bx \in \mathcal{X}$, $g(\bx,\cdot)$ is $\mu_g$-strongly convex.
        \item $\forall\bx \in \mathcal{X}$, $\nabla^2_{yx} g(\bx,\by)\in\reals^{n\times m}$ and $\nabla^2_{yy} g(\bx,\by)$ are Lipschitz continuous with respect to $(\bx,\by) \in \mathcal{X} \times \reals^{m}$, and with constant $L_{yx}^{g} \geq 0 $ and  $L_{yy}^{g} \geq 0 $, respectively.
    \end{enumerate}
\end{assumption}
\begin{remark}\label{re:bounded-norm}
Considering Assumption~\ref{assum:lower}-(ii), 
we can conclude that $\| \nabla^2_{yx} g(\bx,\by)\|$ is bounded above by constant $C_{yx}^{g}\geq 0$ for any $(\bx,\by) \in \mathcal{X}\times \reals^{m}$.
\end{remark}

Next, we present essential properties of the bilevel problem in \eqref{eq:bilevel} based on the aforementioned assumptions. Firstly, through standard analysis under Assumption \ref{assum:lower}, it is evident that the optimal solution trajectory of the lower-level problem denoted as $\by^*(\bx)$, is Lipschitz continuous, as previously discussed in \cite{ghadimi2018approximation}.

Secondly, when aiming to develop a method with a guaranteed convergence, one key requirement is demonstrating the Lipschitz continuity of the gradient of the single-level objective function. In bilevel optimization literature, this often necessitates assuming the boundedness of $\grad_y f(\bx,\by)$ for any $\bx\in \cX$ and $\by\in\reals^m$, as seen in \cite{ghadimi2018approximation,yang2021provably,hong2020two,akhtar2022projection}. However, in contrast, our paper establishes that this condition is solely required for the gradient map $\grad_y f(\bx,\by)$ when restricted to the optimal trajectory of the lower-level problem. Specifically, we demonstrate that it suffices to prove the boundedness of $\grad_y f(\bx,\by^*(\bx))$ for any $\bx\in \cX$, a result derivable from the boundedness of constraint set $\cX$.

Lastly, employing the above findings, we can assert that the gradient of the single-level objective function denoted as $\grad\ell(\bx)$, is also Lipschitz continuous. This outcome constitutes a fundamental element in the subsequent convergence analysis of our proposed method in the following section. The detailed formal statements of these results are presented in the subsequent lemma.
 

\begin{lemma}\label{lem:v_b}
Suppose Assumptions \ref{assum:upper} and \ref{assum:lower} hold. Then for any $\bx,\Bar{\bx} \in \mathcal{X}$,  the following results hold.\\
(I) $\|\by^*(\bx) - \by^*(\Bar{\bx})\| \leq \bL_{\by}\| \bx - \Bar{\bx}\|$,  where $\bL_{\by} \triangleq \frac{C_{yx}^g}{\mu_g}$.\\
(II) $\norm{\grad_y f(\bx,\by^*(\bx))}\leq C_y^f$, 
where $C_y^f \triangleq \Bigl(L_{yx}^f+\frac{L_{yy}^f {C_{yx}^{g}}}{\mu_g}\Bigr)D_\cX+ \|\grad_y f({\bx^*},\by^*({\bx^*}))\|$. 
\\
(III) $\|\nabla \ell(\bx ) - \nabla \ell(\Bar{\bx})\| \leq \bL_{\ell}\|\bx -\Bar{\bx}\|$, where $\bL_{\ell} \triangleq L_{xx}^f +{L_{xy}^f} \bL_{\by} +C_{yx}^g \bC_{\bv} +\frac{C_{y}^{f}}{\mu_g} L_{yx}^g(1+\bL_{\by})$. 
\end{lemma}

To measure the quality of the solution at each iteration in the nonconvex setting, we use the standard Frank-Wolfe gap function associated with the hyper-function defined below.
 \begin{definition}\label{def:FW_gap} 
 The Frank-Wolfe gap of  $\ell$ over the set $\mathcal{X}$ is 
  \begin{equation}\label{eq:FW_gap}
    \mathcal{G}(\bx)\triangleq \max_{\bs\in \mathcal{X}}\{\langle \grad \ell(\bx),\bx-\bs\rangle \}.
  \end{equation}
 \end{definition} 
 When the hyper-function $\ell$ is non-convex, we use its Frank-Wolfe gap as our criteria for convergence which is a standard performance metric for 
constrained non-convex settings; see, e.g., \cite{zhang2020one, reddi2016stochastic}.
 In the convex setting, we simply use the suboptimality gap function, i.e., $\ell(\bx)-\ell(\bx^*)$, for capturing the suboptimality of the solution.

\section{Proposed Method}\label{sec:proposed-method}

As previously discussed in Section~\ref{sec:intro}, the bilevel problem presented in \eqref{eq:bilevel} can be conceptualized as a single-level minimization problem $\min_{\bx\in\cX}\ell(\bx)$. 
Given Assumptions~\ref{assum:upper}~and~\ref{assum:lower}, it can be shown that the gradient of the hyper-objective $\ell(\cdot)$ can be expressed as follows: 
\begin{align}\label{eq:v_def}
    \nabla \ell(\bx)&= \nabla_{x}f(\bx,\by^*(\bx))+ \bJ \by^*(\bx)\nabla_{y}f(\bx,\by^*(\bx)),
\end{align}
where $\bJ\by^*(\bx)=- \nabla^2_{yx}g(\bx,\by^*(\bx)) [\nabla^2_{yy}g(\bx,\by^*(\bx))]^{-1}$; see \cite{ghadimi2018approximation}. Implementing first-order methods for solving the single-level minimization problem poses a significant challenge due to the costly operation of matrix inversion and the requirement of finding the exact optimal solution for the lower-level problem. We further elaborate on these issues in the upcoming section and will discuss how our proposed method overcomes them.
\subsection{Main Algorithm}
As discussed above, there are major limitations in a naive implementation of a first-order method for solving \eqref{eq:bilevel}. 
To propose a practical conditional gradient-based method, we revisit the problem's structure. In particular, the gradient of the single-level problem in \eqref{eq:v_def} can be rewritten as follows 
\begin{subequations}
\begin{align}
&\grad\ell(\bx) =\nabla_{x}f(\bx,\by^*(\bx)) - \nabla^2_{yx}g(\bx,\by^*(\bx)) \bv(\bx),\label{eq:ell-v}\\
& \text{where}\ \ \bv (\bx) \triangleq [\nabla^2_{\!yy}g(\bx,\by^*(\bx))]^{-1} \nabla_{\!y}f(\bx,\by^*(\bx)).
\end{align}
\end{subequations}
In this formulation, the effect of Hessian inversion is presented in a separate term $\bv(\bx)$ which can be viewed as the solution of the following \emph{parametric} quadratic problem
\begin{align}\label{eq:obj-v}
    &\bv(\bx) =  \\
    &\argmin_{\bv}\  \frac{1}{2}\bv^\top \nabla^2_{yy}g(\bx,\by^*(\bx))\bv - \nabla_{y}f(\bx,\by^*(\bx))^\top \bv\nonumber.
\end{align}
Our main idea is to provide \emph{nested} approximations for the true gradient in \eqref{eq:v_def} by estimating trajectories of $\by^*(\bx)$ and $\bv(\bx)$. To ensure convergence, we carefully control the algorithm's progress in terms of variable $\bx$ and limit the error introduced by these approximations. 
More specifically, at each iteration $k\geq 0$, given an iterate $\bx_k$ and an approximated solution of the lower-level problem $\by_k$ we first consider an approximated solution $\tilde \bv(\bx_k)$ of \eqref{eq:obj-v} by replacing $\by^*(\bx_k)$ with its currently available approximation, i.e., $\by_k$, which leads to the following quadratic programming
\begin{equation*}
    \tilde \bv(\bx_k) \triangleq \argmin_{\bv}\  \frac{1}{2}\bv^\top \nabla^2_{yy}g(\bx_k,\by_k)\bv - \nabla_{y}f(\bx_k,\by_k)^\top \bv.
\end{equation*}
Then $\tilde \bv(\bx_k)$ is approximated with an iterate $\bw_{k+1}$ obtained by taking one step of gradient descent with respect to the above objective function as follows,
\begin{equation*}
    \bw_{k+1}\gets \bw_k-\eta_k\left(\grad^2_{yy}g(\bx_k,\by_k)\bw_k-\grad_y f(\bx_k,\by_k)\right),
\end{equation*}
for some step-size $\eta_k\geq 0$. 
This generates an increasingly accurate sequence  $\{\bw_k\}_{k\geq 0}$ that tracks the sequence $\{\bv(\bx_k)\}_{k\geq 0}$. 
Next, given approximated solutions $\by_k$ and $\bw_{k+1}$ for $\by^*(\bx_k)$ and $\bv(\bx_k)$, respectively, we can construct a direction to estimate the hyper-gradient $\grad\ell(\bx_k)$ in \eqref{eq:ell-v}. To this end, we construct a direction $F_k=\nabla_{x}f(\bx_k, \by_k)- \nabla^2_{yx}g(\bx_k, \by_k)\bw_{k+1}$, which determines the next iteration $\bx_{k+1}$ using a Frank-Wolfe type update, i.e., 
\begin{align*}
    &\bs_k\gets \argmin_{\bs\in\cX}\langle{F_k,\bs}\rangle,\quad \bx_{k+1}\gets (1-\gamma_k)\bx_k+\gamma_k\bs_k.
\end{align*}
for some step-size $\gamma_k\in [0,1]$. 
Finally, having an updated decision variable $\bx_{k+1}$ we estimate the lower-level optimal solution $\by^*(\bx_{k+1})$ by performing another gradient descent step with respect to the lower-level function $g(\bx_k,\by_k)$ with step-size $\alpha>0$ to generate a new iterate $\by_{k+1}$ as follows:
\begin{equation*}
    \by_{k+1}\gets \by_k-\alpha\grad_y g(\bx_{k+1},\by_k). 
\end{equation*}
Our proposed inexact bilevel conditional gradient (IBCG) method is summarized in Algorithm~\ref{alg:In-BiCoG}.

\begin{algorithm}[t!]
\caption{Inexact Bilevel Conditional Gradient (IBCG) Method}\label{alg:In-BiCoG}
\begin{algorithmic}[1]
\STATE \textbf{Input}: $\{\gamma_k, \eta_k\}_k\subseteq\reals_+$, $\alpha>0$, $\bx_0\in \mathcal{X}$, $\by_0 \in \reals^{m}$
\STATE \textbf{Initialization}: ${\bw^0\gets \by^0}$
\FOR{$k = 0,\dots,K-1$}
\STATE $\bw_{k+1} {\gets}(I - \eta_k \nabla^2_{yy}g(\bx_k, \by_k) ) \bw_{k} + \eta_k \nabla_{y}f(\bx_k, \by_k)$
\STATE {$F_k\gets \nabla_{x}f(\bx_k, \by_k)- \nabla^2_{yx}g(\bx_k, \by_k)\bw_{k+1}$}
\STATE Compute $\bs_k \gets \argmin_{\bs \in \mathcal{X}}~\langle{F_k,\bs}\rangle $ 
\STATE $ \bx_{k+1} \gets (1-\gamma_k)\bx_k+\gamma_k \bs_k$\\
\STATE $\by_{k+1}\gets\by_k-\alpha\grad_y g(\bx_{k+1},\by_k)$
\ENDFOR
\end{algorithmic}
\end{algorithm}

 \section{Convergence Analysis}

To establish the convergence guarantee of the proposed IBCG method, we first present the following lemma that quantifies the error between the approximated direction $F_k$ and the exact hyper-gradient $\grad \ell(\bx_k)$. This involves providing upper bounds on the errors induced by our nested approximation technique discussed above, i.e., $\norm{\bw_{k+1}-\bv(\bx_k)}$ and $\norm{\by_{k+1}-\by^*(\bx_{k+1})}$, as well as Lemma \ref{lem:v_b}.

 \begin{lemma}\label{lem:ell-es}
 Suppose Assumptions~\ref{assum:upper}-\ref{assum:lower} hold and let $\beta\triangleq (L_g-\mu_g)/(L_g+\mu_g)$ and $\bC_\bv\triangleq \frac{L_{yx}^{f} + L_{yy}^{f}\bL_{\by}}{\mu_g} + \frac{C_{y}^{f}L_{yy}^{g}}{\mu_g^2}(1+\bL_{\by})$.
 Moreover, let $\{\bx_k,\by_k,\bw_k\}_{k\geq 0}$ be the sequence generated by Algorithm \ref{alg:In-BiCoG} with step-sizes $\gamma_k=\gamma \in (0,1]$, $\eta_k=\eta< \frac{1-\beta}{\mu_g}$, and $\alpha=2/(\mu_g+L_g)$. Then, for any $k\geq 0$
     \begin{align}
         \| \nabla \ell (\bx_k) - F_k \| &\leq \bC_2 \big( \beta^k D_0^y  + \frac{\gamma \beta \bL_{\by}}{1-\beta}D_{\cX} \big)+ C_{yx}^{g} \Big(\rho^{k+1} \|{\bw_0}  - \bv(\bx_0) \|  + \frac{\gamma \rho  \bC_{\bv}}{1-\rho} D_{\cX}  \nonumber \\
         & \quad + \frac{\eta \bC_1 }{\rho - \beta} \rho^{k+2}D_0^y  +\frac{\gamma \beta \bC_1 \bL_{\by}}{(1-\rho) (1- \beta)}D_{\cX}\Big),
     \end{align}
     where $\rho\triangleq 1-\eta\mu_g$, $\bC_1 \triangleq L_{yy}^{g} \frac{ C_{y}^{f}}{\mu_g} +  L_{yy}^{f}$,  $\bC_2 \triangleq  \na{L_{xy}^{f}} + L_{yx}^{g} \frac{C_{y}^{f}}{\mu_g}$, and $D_0^y\triangleq \norm{\by_0-\by^*(\bx_0)}$. 
 \end{lemma}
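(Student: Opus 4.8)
The plan is to reduce the whole bound to controlling two tracking errors and then to handle each by a geometric recursion. Writing out $\nabla\ell(\bx_k) - F_k$ from \eqref{eq:ell-v} and the definition of $F_k$, I would insert $\pm\,\nabla_{yx}g(\bx_k,\by_k)\bv(\bx_k)$ and split the difference into three pieces: $\nabla_x f(\bx_k,\by^*(\bx_k)) - \nabla_x f(\bx_k,\by_k)$, then $[\nabla_{yx}g(\bx_k,\by^*(\bx_k)) - \nabla_{yx}g(\bx_k,\by_k)]\bv(\bx_k)$, and finally $\nabla_{yx}g(\bx_k,\by_k)[\bv(\bx_k) - \bw_{k+1}]$. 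Bounding the first two via Assumptions~\ref{assum:upper}--\ref{assum:lower} (using $\|\bv(\bx_k)\| \le C_y^f/\mu_g$, which follows from $\nabla_{yy}g \succeq \mu_g I$ and Lemma~\ref{lem:v_b}(II)) collapses them into $\bC_2\,\|\by_k - \by^*(\bx_k)\|$, while the third is at most $C_{yx}^g\|\bw_{k+1}-\bv(\bx_k)\|$ by Remark~\ref{re:bounded-norm}. This gives the master inequality $\|\nabla\ell(\bx_k)-F_k\| \le \bC_2 D_k^y + C_{yx}^g\,\|\bw_{k+1}-\bv(\bx_k)\|$, where $D_k^y \triangleq \|\by_k - \by^*(\bx_k)\|$, and it remains to bound the two errors.

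For the lower-level error $D_k^y$, since $\by_{k+1}$ is one gradient step on the $\mu_g$-strongly convex, $L_g$-smooth map $g(\bx_{k+1},\cdot)$ with the optimal step $\alpha = 2/(\mu_g+L_g)$, the standard contraction gives $\|\by_{k+1}-\by^*(\bx_{k+1})\| \le \beta\,\|\by_k - \by^*(\bx_{k+1})\|$. Inserting $\by^*(\bx_k)$ and using Lemma~\ref{lem:v_b}(I) together with $\|\bx_{k+1}-\bx_k\| = \gamma\|\bs_k-\bx_k\| \le \gamma D_\cX$ yields $D_{k+1}^y \le \beta(D_k^y + \bL_\by\gamma D_\cX)$; unrolling this geometric recursion produces $D_k^y \le \beta^k D_0^y + \frac{\gamma\beta\bL_\by}{1-\beta}D_\cX$, which is exactly the factor multiplying $\bC_2$ in the claim.

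The auxiliary error $\|\bw_{k+1}-\bv(\bx_k)\|$ is the crux. I would first record that $\bv(\cdot)$ is $\bC_\bv$-Lipschitz: writing $\bv(\bx) = H^*(\bx)^{-1}b^*(\bx)$ with $H^*(\bx) = \nabla_{yy}g(\bx,\by^*(\bx))$ and $b^*(\bx) = \nabla_y f(\bx,\by^*(\bx))$, the identity $H_1^{-1}b_1 - H_2^{-1}b_2 = H_1^{-1}(b_1-b_2) + H_1^{-1}(H_2-H_1)H_2^{-1}b_2$, combined with the Lipschitz bounds on $\nabla_y f$ and $\nabla_{yy}g$ (which acquire the extra $\bL_\by$ factor through $\by^*$) and $\|\bv\|\le C_y^f/\mu_g$, reproduces the stated $\bC_\bv$. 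Next, since $\bw_{k+1} = (I-\eta H_k)\bw_k + \eta b_k$ with $H_k = \nabla_{yy}g(\bx_k,\by_k)$ and $b_k = \nabla_y f(\bx_k,\by_k)$, I would subtract the exact relation $H^*(\bx_k)\bv(\bx_k) = b^*(\bx_k)$ to get $\bw_{k+1}-\bv(\bx_k) = (I-\eta H_k)(\bw_k - \bv(\bx_k)) + \eta\,(b_k - H_k\bv(\bx_k))$. The hypothesis $\eta < (1-\beta)/\mu_g = 2/(\mu_g+L_g)$, with $\mu_g I \preceq H_k \preceq L_g I$, forces $\|I-\eta H_k\| \le 1-\eta\mu_g = \rho$, and inserting $b^*(\bx_k), H^*(\bx_k)$ bounds the residual by $\|b_k - H_k\bv(\bx_k)\| \le \bC_1 D_k^y$. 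This gives $\|\bw_{k+1}-\bv(\bx_k)\| \le \rho\|\bw_k - \bv(\bx_k)\| + \eta\bC_1 D_k^y$, and one more triangle step $\|\bw_k-\bv(\bx_k)\| \le \|\bw_k-\bv(\bx_{k-1})\| + \bC_\bv\gamma D_\cX$ closes a recursion in $\phi_k \triangleq \|\bw_{k+1}-\bv(\bx_k)\|$, namely $\phi_k \le \rho\phi_{k-1} + \rho\bC_\bv\gamma D_\cX + \eta\bC_1 D_k^y$.

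Finally I would unroll this recursion: the constant drift sums to $\frac{\gamma\rho\bC_\bv}{1-\rho}D_\cX$, the initial error contributes $\rho^{k+1}\|\bw_0-\bv(\bx_0)\|$, and substituting the two-part bound for $D_j^y$ produces a convolution $\eta\bC_1\sum_{j}\rho^{k-j}D_j^y$ whose transient part involves $\sum_{j}\rho^{k-j}\beta^j = \frac{\rho^{k+1}-\beta^{k+1}}{\rho-\beta}$ — matching the $\rho$-geometric, $D_0^y$-proportional third term of the claim — and whose steady part, after using $\eta/(1-\beta)\le 1/\mu_g$, gives the $\frac{\gamma\beta\bC_1\bL_\by}{(1-\rho)\mu_g}D_\cX$ term; multiplying by $C_{yx}^g$ and adding $\bC_2 D_k^y$ yields the stated bound. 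The main obstacle is this nested recursion: it is a contraction of rate $\rho$ driven by a source that itself decays at rate $\beta$ plus an $\bx$-drift, so the double geometric sum, the sign condition $\rho>\beta$ (again guaranteed by $\eta<(1-\beta)/\mu_g$, since then $\rho = 1-\eta\mu_g > \beta$), and the spectral contraction of $I-\eta H_k$ must be reconciled simultaneously; establishing the $\bC_\bv$-Lipschitz continuity of $\bv(\cdot)$ through matrix perturbation is the other delicate ingredient.
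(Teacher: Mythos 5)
Your proposal is correct and follows essentially the same route as the paper: the same three-way splitting of $\nabla\ell(\bx_k)-F_k$ into $\bC_2\|\by_k-\by^*(\bx_k)\| + C_{yx}^g\|\bw_{k+1}-\bv(\bx_k)\|$, the same $\beta$-contraction recursion for the lower-level tracking error, the same matrix-inverse perturbation argument for the $\bC_\bv$-Lipschitz continuity of $\bv(\cdot)$, and the same $\rho$-contraction driven by a $\beta$-decaying source, unrolled via the double geometric sum with $\rho>\beta$. You even make explicit two steps the paper leaves implicit, namely that $\eta<(1-\beta)/\mu_g=2/(\mu_g+L_g)$ is exactly what guarantees $\|I-\eta H_k\|\le 1-\eta\mu_g$, and that the substitution $\eta/(1-\beta)\le 1/\mu_g$ converts the unrolled drift term into the constant $\frac{\gamma\beta\bC_1\bL_\by}{(1-\rho)\mu_g}D_\cX$ appearing in the final statement.
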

 


Lemma \ref{lem:ell-es} provides an upper bound on the error of the approximated gradient direction $F_k$. This bound encompasses two types of terms: those that decrease exponentially fast and others that are influenced by the parameter $\gamma$. 
The selection of the appropriate value for $\gamma$ is crucial because if it is too large, it can introduce significant errors in the algorithm's direction, and if it is too small, it can impede progress in the iterations. Therefore, it is essential to choose an appropriate $\gamma$ based on the algorithm's overall progress. By utilizing the insights from Lemma \ref{lem:ell-es}, we can set a limit on the gap function and ensure a guaranteed rate of convergence through the appropriate choice of $\gamma$.

Next, we present our convergence guarantees for the IBCG method. We start by considering the scenario where the hyper-objective function  $\ell(\cdot)$ exhibits convexity.
Before presenting our results, we highlight two generic examples that yield a convex hyper-objective function $\ell$: (i) When $f$ is jointly convex, and $y^*(\cdot)$ is an affine map. (ii) In cases where we encounter a min-max optimization problem of the form $\min_{\mathbf{x} \in \mathcal{X}} \max_{\mathbf{y} \in \mathbb{R}^m} f(\mathbf{x},\mathbf{y})$, and $f$ demonstrates convexity in $\bf x$ and strong concavity in $\bf y$. This problem can be reformulated into a bilevel optimization problem by defining $g({\bf x},{\bf y})=-f({\bf x},{\bf y})$, which results in a convex hyper-objective function $\ell(\mathbf{x}) = \max_{\mathbf{y}} f(\mathbf{x},\mathbf{y})$.

\begin{theorem}[Convex bilevel]\label{thm:convex-upper-bound}
Suppose Assumptions~\ref{assum:upper} and \ref{assum:lower} hold. Let $\{\bx_k\}_{k=0}^{K-1}$ be the sequence generated by Algorithm~\ref{alg:In-BiCoG} with step-sizes specified as in Lemma \ref{lem:ell-es}. If $\ell(\bx)$ is convex, then for all $K\geq 1$, 
\begin{align}\label{eq:subopt-convex}
   \ell(\bx_K) - \ell(\bx^*) &\leq (1- \gamma )^K( \ell(\bx_0) - \ell(\bx^* )) + \sum_{k=0}^{K-1}(1-\gamma)^{K-k}\cR_k(\gamma),
\end{align}
where
$
     \cR_k(\gamma)\triangleq 
     \gamma \bC_2 \beta^k D_0^y D_{\cX} + \frac{\gamma^2 \bC_2 D_{\cX}^2       \bL_{\by} \beta}{1-\beta} +   C_{yx}^{g}[ \gamma D_{\cX}  \rho^{k+1}
         \|{\bw_0} - \bv(\bx_0) \|  + \frac{\gamma^2 D_{\cX}^2      \rho \bC_{\bv}}{1-\rho} 
    + \frac{\gamma D_{\cX} D_0^y \bC_1\eta \rho^{k+2}}{\rho - \beta} 
         + \frac{\gamma^2 D_{\cX}^2 \bL_{\by} \bC_1 \beta \eta}{(1-\beta)(1-\rho)}] + \frac{1}{2} \bL_{\ell} \gamma^2 D_{\cX}^2$.
\end{theorem}

Theorem~\ref{thm:convex-upper-bound} establishes that in the convex setting, the suboptimality of the function $\ell$ is bounded above by an upper bound composed of two components. The first component exhibits a linear convergence rate and decreases exponentially, while the second component stems from errors in nested approximations and can be alleviated by reducing the step-size $\gamma$. Therefore, by properly choosing the step-size $\gamma$, we can attain a guaranteed convergence rate, as outlined in the following Corollary.

\begin{corollary}\label{cr:convex_upper-bound}
Let $\{\bx_k\}_{k=0}^{K-1}$ be the sequence generated by Algorithm \ref{alg:In-BiCoG} with step-size $\gamma_k = \gamma =  \frac{\log( K)}{K}$. Under the premises of Theorem~\ref{thm:convex-upper-bound} we have that  $\ell(\bx_K) - \ell(\bx^*) \leq \epsilon$ after $\cO(\kappa_g^5 \epsilon^{-1}\log(\epsilon^{-1}))$ iterations. Furthermore, assuming that $\grad_y f(\bx,\cdot)$ is uniformly bounded for any $\bx\in\cX$, we have that $\ell(\bx_k)-\ell(\bx^*) \leq \epsilon$ after $\cO(\kappa_g^4 \epsilon^{-1}\log(\epsilon^{-1}))$ iterations.
\end{corollary}

By leveraging the result of Theorem~\ref{thm:convex-upper-bound}, the above Corollary characterizes the complexity of IBCG in the convex setting. Specifically, when we set $\gamma=\log(K)/K$, the total iteration complexity of IBCG to find an $\epsilon$-optimal solution is $\cO(\kappa_g^5 \epsilon^{-1}\log(\epsilon^{-1}))$, where $\kappa_g$ represents the condition number of the lower-level problem. Additionally, if the gradient of the upper-level is bounded, the required complexity is further reduced to $\cO(\kappa_g^4 \epsilon^{-1}\log(\epsilon^{-1}))$.
Now we turn to the case where $\ell(\cdot)$ is nonconvex.
\begin{theorem}[Non-convex bilevel]\label{thm:nonconvex-upper-bound}
Suppose Assumptions~\ref{assum:upper} and \ref{assum:lower} hold. Let $\{\bx_k\}_{k=0}^{K-1}$ be the iterates generated by Algorithm \ref{alg:In-BiCoG} with step-sizes specified as in Lemma \ref{lem:ell-es}.
Then, 
\begin{align}
    \mathcal{G}_{k^*} &\leq  \frac{\ell(\bx_0) -\ell(\bx^*)}{K \gamma}+ \frac{\gamma \bC_2 D_{\cX} \bL_{\by} \beta}{1-\beta}+ \frac{\gamma D_{\cX}^2 \rho \bC_{\bv} C_{yx}^{g} \rho}{1-\rho} +\frac{\gamma D_{\cX}^2 C_{yx}^{g} \bL_{\by} \bC_1 \beta \eta}{(1-\beta)(1-\rho)} + \frac{1}{2} \bL_{\ell}\gamma D_{\cX}^2  + \frac{\bC_2  D_0^y D_{\cX} \beta}{K(1- \beta)} \nonumber \\
    & \quad + \frac{D_{\cX}  C_{yx}^{g} \rho \|{\bw_0} - \bv(\bx_0) \|}{K(1- \rho)}+ \frac{ D_{\cX} D_0^y C_{yx}^{g} \bC_1 \eta \rho^2}{K(\rho-\beta)(1-\rho)} \nonumber,
\end{align}
where $\mathcal{G}_{k^*}$ is defined as $\mathcal{G}_{k^*} \triangleq  \min_ {0\leq k \leq K-1}~\mathcal{G}(\bx_k)$.
\end{theorem}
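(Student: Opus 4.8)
The plan is to run the standard Frank--Wolfe descent argument for smooth nonconvex objectives, but with the true hyper-gradient $\nabla\ell(\bx_k)$ replaced by the inexact direction $F_k$, and then to absorb the resulting mismatch using the per-iteration error bound already established in Lemma~\ref{lem:ell-es}. First I would invoke the $\bL_\ell$-smoothness of $\ell$ from Lemma~\ref{lem:v_b}(III). Since $\bx_{k+1}-\bx_k=\gamma(\bs_k-\bx_k)$ and $\|\bs_k-\bx_k\|\leq D_{\cX}$, the descent lemma gives
\[
\ell(\bx_{k+1})\leq \ell(\bx_k)+\gamma\langle\nabla\ell(\bx_k),\bs_k-\bx_k\rangle+\tfrac{1}{2}\bL_\ell\gamma^2 D_{\cX}^2.
\]

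Next I would convert the inner product into a negative Frank--Wolfe gap plus a controllable error. Let $\bs_k^\star\in\argmin_{\bs\in\cX}\langle\nabla\ell(\bx_k),\bs\rangle$, so that $\mathcal{G}(\bx_k)=\langle\nabla\ell(\bx_k),\bx_k-\bs_k^\star\rangle$. Splitting $\langle\nabla\ell(\bx_k),\bs_k-\bx_k\rangle=\langle F_k,\bs_k-\bx_k\rangle+\langle\nabla\ell(\bx_k)-F_k,\bs_k-\bx_k\rangle$ and using optimality of $\bs_k$ for the linear minimization oracle, $\langle F_k,\bs_k\rangle\leq\langle F_k,\bs_k^\star\rangle$, one obtains
\[
\langle\nabla\ell(\bx_k),\bs_k-\bx_k\rangle\leq -\mathcal{G}(\bx_k)+\langle\nabla\ell(\bx_k)-F_k,\bs_k-\bs_k^\star\rangle\leq -\mathcal{G}(\bx_k)+\|\nabla\ell(\bx_k)-F_k\|\,D_{\cX}.
\]
Substituting into the descent inequality and rearranging yields the one-step estimate $\gamma\mathcal{G}(\bx_k)\leq \ell(\bx_k)-\ell(\bx_{k+1})+\gamma D_{\cX}\|\nabla\ell(\bx_k)-F_k\|+\tfrac{1}{2}\bL_\ell\gamma^2 D_{\cX}^2$.

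I would then sum over $k=0,\dots,K-1$: the difference $\ell(\bx_k)-\ell(\bx_{k+1})$ telescopes to $\ell(\bx_0)-\ell(\bx_K)\leq\ell(\bx_0)-\ell(\bx^*)$, and lower-bounding the left-hand side by $K\gamma\,\mathcal{G}_{k^*}$ with $\mathcal{G}_{k^*}\triangleq\min_{0\leq k<K}\mathcal{G}(\bx_k)$ and dividing by $K\gamma$ gives
\[
\mathcal{G}_{k^*}\leq\frac{\ell(\bx_0)-\ell(\bx^*)}{K\gamma}+\frac{D_{\cX}}{K}\sum_{k=0}^{K-1}\|\nabla\ell(\bx_k)-F_k\|+\tfrac{1}{2}\bL_\ell\gamma D_{\cX}^2.
\]

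The final and most delicate step is to insert the bound of Lemma~\ref{lem:ell-es} into the error sum and separate it into two regimes. The geometrically decaying contributions $\beta^k$, $\rho^{k+1}$, $\rho^{k+2}$ sum to constants via $\sum_k\beta^k\leq(1-\beta)^{-1}$ and $\sum_k\rho^{k+j}\leq\rho^j(1-\rho)^{-1}$; once multiplied by $D_{\cX}/K$ these yield the three $\cO(1/K)$ terms carrying $D_0^y$ and $\|\bw_0-\bv(\bx_0)\|$. By contrast, the terms in Lemma~\ref{lem:ell-es} that are proportional to $\gamma$ are constant in $k$, so their sum contributes a factor $K$ that cancels the $1/K$, leaving the $\cO(\gamma)$ terms. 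I expect the main obstacle to be purely the bookkeeping in this last step: matching each geometric sum and each $\gamma$-proportional term to the stated constants $\bC_{\bv},\bC_1,\bC_2$ while tracking the exact powers of $\beta$, $\rho$, and the factors $D_{\cX}$, and exploiting $\eta<(1-\beta)/\mu_g$ (hence $\rho=1-\eta\mu_g>\beta$) to keep the $\rho-\beta$ denominators positive and finite.
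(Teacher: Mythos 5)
Your proposal is correct and follows essentially the same route as the paper: the paper's proof likewise applies the $\bL_\ell$-descent lemma, splits $\langle\nabla\ell(\bx_k),\bs_k-\bx_k\rangle$ via the LMO optimality of $\bs_k$ to get $-\mathcal{G}(\bx_k)$ plus a $D_{\cX}\|\nabla\ell(\bx_k)-F_k\|$ error (packaged as Lemma~\ref{lem:one-step}), then telescopes, bounds the geometric sums by $1/(1-\beta)$ and $\rho/(1-\rho)$, and divides by $K\gamma$ using $\ell(\bx^*)\leq\ell(\bx_K)$. The only difference is organizational: the paper folds Lemma~\ref{lem:ell-es} into the one-step lemma before summing, whereas you keep the error sum abstract and substitute at the end, which changes nothing.
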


Theorem~\ref{thm:nonconvex-upper-bound} establishes
an upper bound on the  Frank-Wolfe gap for the iterates
generated by IBCG. It shows that the Frank-Wolfe gap vanishes when the step-size $\gamma$ is properly selected. Specifically, setting $\gamma = \cO(1/\sqrt{K})$  as outlined in the next corollary leads to a convergence rate of $\cO(1/\sqrt{K})$. 
\begin{corollary}\label{cr:nonconvex_upper-bound}
Let $\{\bx_k\}_{k=0}^{K-1}$ be the sequence generated by Algorithm \ref{alg:In-BiCoG} with step-size $\gamma_k = \gamma =  \kappa_g^{-2.5}K^{-0.5}$, then there exists $k^*\in\{0,1,\dots,K-1\}$ such that $\mathcal{G}_{k^*} \leq \epsilon$ after $\cO(\kappa_g^5 \epsilon^{-2})$ iterations. Furthermore, if $\grad_y f(\bx,\cdot)$ is uniformly bounded for any $\bx\in\cX$, selecting $\gamma_k = \gamma =  \kappa_g^{-2}K^{-0.5}$ implies that $\mathcal{G}_{k^*} \leq \epsilon$ after $\cO(\kappa_g^4 \epsilon^{-2})$ iterations.
\end{corollary}

\begin{remark}
Our proposed method's efficiency stands out due to its requirement of just two matrix-vector multiplications. Furthermore, our results establish the state-of-the-art bounds for the considered setting.
In the convex setting, our complexity result is near-optimal among projection-free methods for single-level optimization problems, as it is known that the worst-case complexity of such methods is $\cO(1/\epsilon)$ \cite{jaggi2013revisiting, lan2013complexity}.
In the nonconvex setting, our complexity result matches the best-known bound of $\cO(1/\epsilon^2)$ within the family of projection-free methods for single-level optimization problems \cite{jaggi2013revisiting, lan2013complexity}. This underscores the efficiency and effectiveness of our approach in this specific context.
\end{remark}

\begin{remark}
One of the key distinctions between our IBCG method and SBFW lies in how we approximate the Hessian matrix inversion within the hyper-gradient equation in \eqref{eq:v_def}. In our approach, we compute $\tilde{\mathbf{\bv}}(\bx_k)$ by computing a single step of gradient descent on the quadratic function presented in \eqref{eq:obj-v}. This leads to a single-loop algorithm that demands only two matrix-vector products per iteration. Considering the general scenario where $\grad_y f(\bx,\cdot)$ may not be bounded, the error of approximating $\bv(\bx_k)$ can be bounded as $\norm{\bw_k-\bv(\bx_k)}\leq \cO((\frac{\kappa_g-1}{\kappa_g+1})^k+{\kappa_g^{2.5}}/{\sqrt{k}})$ by selecting $\gamma$ as in Corollary \ref{cr:nonconvex_upper-bound} -- see Lemma~\ref{lem:nu_est_K} in Appendix for details. In contrast, SBFW constructs a biased stochastic estimate of the Hessian inverse matrix, namely $\bH_k$, such that $\norm{\bH_k\grad_yf(\bx_k,\by_k)-\bv(\bx_k)}\leq \cO(\kappa_g^2/\sqrt{k})$ at the cost of computing $\cO(\kappa_g\log(k))$ matrix-vector products per iteration. Consequently, our analysis is fundamentally different from those in prior work and presents a novel contribution within the realm of projection-free algorithms.
\end{remark}

\section{Numerical Experiments}\label{sec:numeric}

\begin{figure*}[ht!]
     \centering
     \includegraphics[scale=0.32]{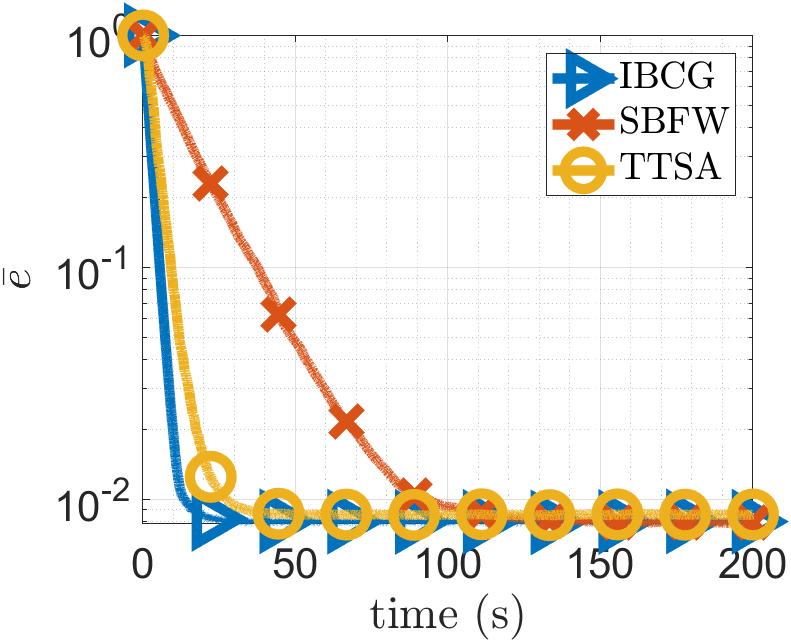}\qquad 
     \includegraphics[scale=0.32]{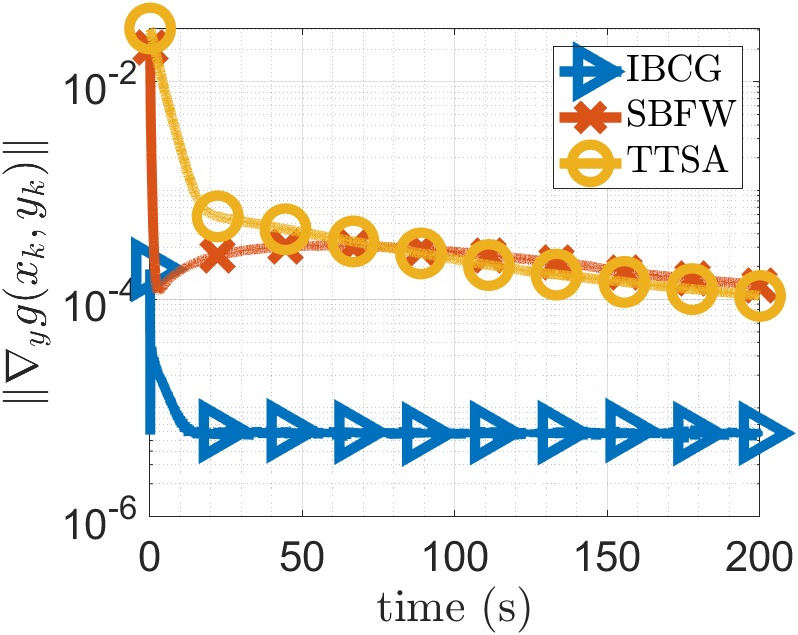}\qquad
     \includegraphics[scale=0.32]{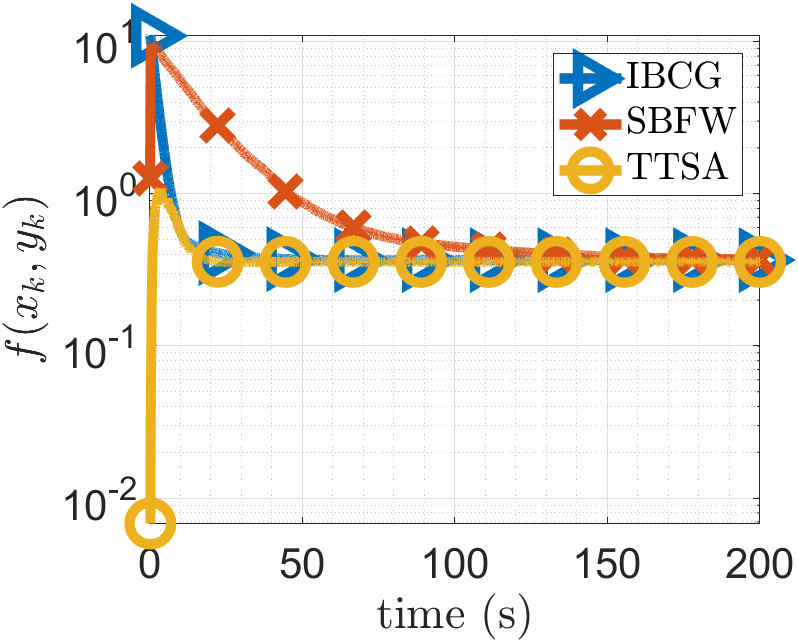}
     \vspace{-2mm}
     \caption{Performance of IBCG  vs SBFW and  TTSA on problem \eqref{ex:Matrix_comp} for synthetic dataset. Plots from left to right:  normalized error $(\Bar{e})$, 
      $\|\nabla_y g(\bx_k,\by_k)\|$, 
     and $f(\bx_k,\by_k)$ over time.}
     \label{fig:matrixcom250}
 \end{figure*}

 \begin{figure*}[ht!]
     \centering
     \includegraphics[scale=0.32]{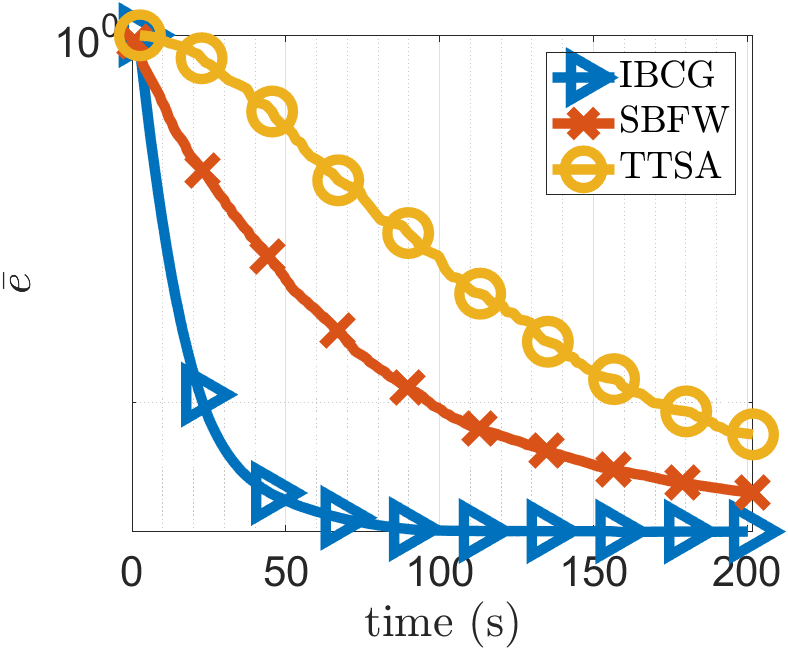}
     \qquad
     \includegraphics[scale=0.32]{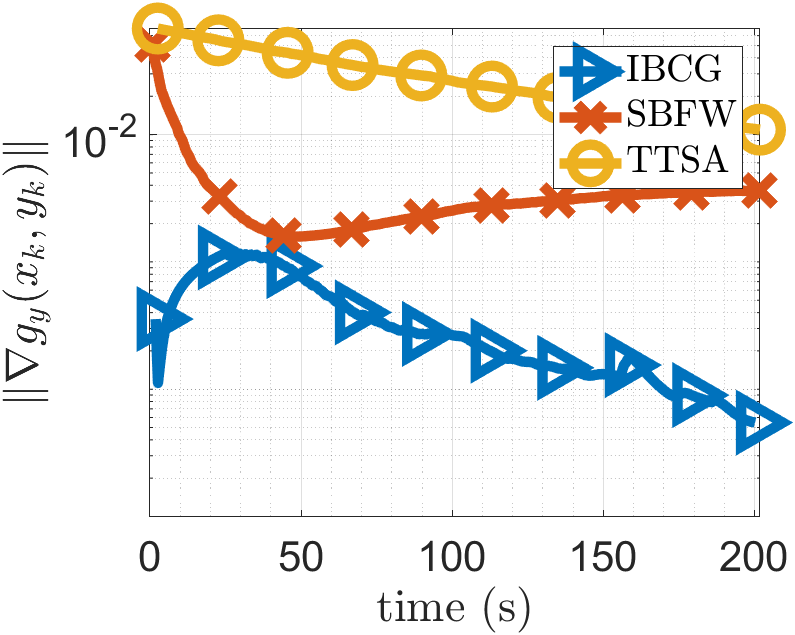}
     \qquad
     \includegraphics[scale=0.32]{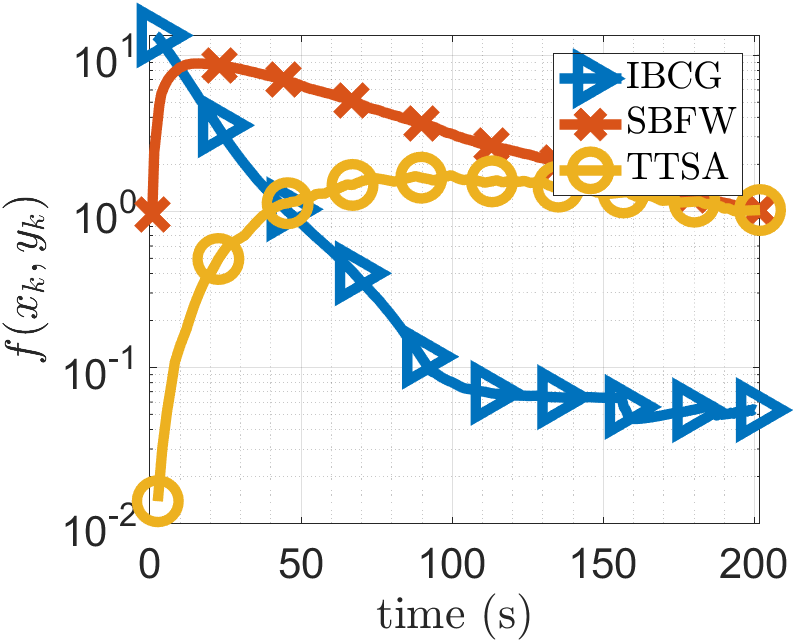}
     \vspace{-2mm}
     \caption{Performance of IBCG (blue) vs SBFW (red) and  TTSA (yellow) on problem \eqref{ex:Matrix_comp} for the MovieLens dataset. Plots from left to right: normalized error $(\Bar{e})$, 
      $\|\nabla_y g(\bx_k,\by_k)\|$, 
     and $f(\bx_k,\by_k)$ over time.}
     \label{fig:matrixcom_real}
     \vspace{-3mm}
 \end{figure*}
 
 In this section, we test our method for solving different bilevel optimization problems and compare it with methods proposed in \cite{hong2020two,akhtar2022projection}. All the experiments are performed with Intel(R) Core(TM) i5-10210U CPU @ 1.60GHz. Additional numerical experiments are presented in Appendix \ref{sec:additional_example}.

\subsection{Matrix Completion with Denoising}\label{num: matrix_comp}

In this section, we evaluate the performance of our IBCG algorithm in solving the matrix completion with denoising  problem outlined in \eqref{ex:Matrix_comp}. We conduct experiments using both synthetic and real datasets to assess its effectiveness.

\noindent \textbf{Synthetic dataset.}
The experimental setup we adopt is aligned with the methodology used in \citet{mokhtari2020stochastic}. In particular, we create an observation matrix $M = \hat{X}+E$. In this setting $\hat{X} = WW^{T}$  where $W \in \reals^{n \times r}$ containing normally distributed independent entries, and $E = \hat{n}(L+L^{T})$ is a noise matrix where $L \in \reals^{n \times n}$ containing normally distributed independent entries and $\hat{n} \in (0,1)$ is the noise factor. During the simulation process, we set $n= 250$, $r = 10$, and $\alpha = \| \hat{X}\|_{*}$. Additionally, we establish the set of observed entries $\Omega$ by randomly sampling entries of  $M$ with a probability of $0.8$. Initially, we set $\hat{n}=0.5$ and employ the IBCG algorithm to solve the problem described in~\eqref{ex:Matrix_comp}. We compare our method with 
TTSA~\citet{hong2020two} and SBFW~\citet{akhtar2022projection}. 
We set $ \lambda_1 = \lambda_2 = 0.05$, and set the maximum number of iteration as $10^4$. It should be noted that we consider pseudo-Huber loss defined by $\cR_{\delta}(\bV) = \sum_{i,j}\delta^2 (\sqrt{1+(\bV_{ij}/ \delta)^2}-1)$ as a regularization term to induce sparsity and set $\delta = 0.9$. The performance is analyzed based on the normalized
error, defined as $ \Bar{e} := \sum _{(i,j) \in \Omega} (X_{i,j} - \hat{X}_{i,j})^2 / \sum _{(i,j) \in \Omega}  (\hat{X}_{i,j})^2$, 
where $X$ is the matrix generated by the algorithm.
Note that, despite being a projection-free method, the SBFW algorithm exhibits a slower theoretical convergence rate. In comparison, our proposed method surpasses TTSA by attaining lower values of $\|\grad_y g(\bx_k,\by_k)\|$ and demonstrating slightly better performance in terms of the normalized error values, as shown in Fig.~\ref{fig:matrixcom250}.
This gain is due to the projection-free nature of our method, allowing for fast convergence without the need for complex projections at each iteration.

\noindent \textbf{Real dataset.}
To assess IBCG's scalability, we conducted experiments using the MovieLens datasets, containing large matrices of user-generated movie ratings (ranging from 1 to 5). We employed the MovieLens 100k dataset, encompassing $10^5$ ratings from 1000 individuals and 1700 movies. This dataset is represented by the observation matrix $M\in\reals^{1000 \times 1700}$.
Fig.~\ref{fig:matrixcom_real} illustrates the performance of considered methods. Our proposed method exhibits faster convergence in terms of normalized error ($\bar{e}$), lower-level optimality $\|\grad_y g(\bx_k,\by_k)\|$, and upper-level objective function value $f(\bx_k,\by_k)$ compared to other methods.
To underscore the practical significance of the projection-free bilevel approach, we further conducted experiments on a more extensive dataset, with the results available in Appendix~\ref{appen: matrix_comp}.

{\subsection{Multi-Task Learning}}

In this section, we compare our IBCG method with other benchmarks for solving the Multi-Task Learning problem defined in \eqref{ex:MTL_bi}. We use various datasets, including \texttt{bodyfat}, \texttt{housing}, \texttt{mg}, \texttt{mpg}, and \texttt{space-ga} from the LibSVM library \citet{CC01a}, treating each as the dataset corresponding to a distinct task.
For each dataset $\cD_i=(\bA_i,\bb_i)$, where $\bA_i\in \reals^{n_i\times d}$ and $\bb_i\in\reals^{n_i}$, we define its corresponding loss function as $L_i(w_i;\cD_i)=\frac{1}{n_i}\norm{\bA_iw_i-\bb_i}^2$. We assign $60\%$ of the data points of each task as the training set $(\bA_i^{\mathrm{tr}},\bb_i^{\mathrm{tr}})$, $20\%$ as the validation set $(\bA_i^{\mathrm{val}},\bb_i^{\mathrm{val}})$ and the rest as the test set $(\bA_i^{\mathrm{test}},\bb_i^{\mathrm{test}})$.
To evaluate our method's efficacy, we compare it with a state-of-the-art projection-free method for constrained bilevel optimization problems namely SBFW, and excluding TTSA due to its frequent projections onto the constraint set, incurring substantial computational overhead -- for the details of the experiment see Appendix~\ref{appen:MTL}. Fig.~\ref{fig:MTL_itr} illustrates the performance of the two methods, showing our proposed method achieving faster convergence compared to the slower convergence rate of SBFW.
To emphasize our method's practical significance over SBFW, we also conducted this experiment by fixing the running time for both methods and observe that IBCG algorithm demonstrates rapid initial improvement as shown in Fig.~\ref{fig:MTL_time}. 
Furthermore, the comparison of test accuracy for the trained model with the algorithms and training datasets individually can be found in Appendix~\ref{appen:MTL}.

\section{Conclusion}
In this paper, we focused on the constrained general bilevel optimization problem that appears in a wide range of applications. We introduced a novel single-loop projection-free method based on nested approximation techniques, which provides optimal convergence rate guarantees, matching the best-known complexity of projection-free algorithms for solving convex constrained single-level optimization problems.
Specifically, we demonstrated that our method requires approximately $\tilde{\cO}(\epsilon^{-1})$ iterations to find an $\epsilon$-optimal solution when the hyper-objective function $\ell(x)$ is convex. For nonconvex $\ell(x)$, it takes approximately $\cO(\epsilon^{-2})$ iterations to find an $\epsilon$-stationary point. Our numerical results further underscored the superior performance of our IBCG algorithm compared to existing methods.

\begin{figure}
     \centering
     \includegraphics[scale=0.3]{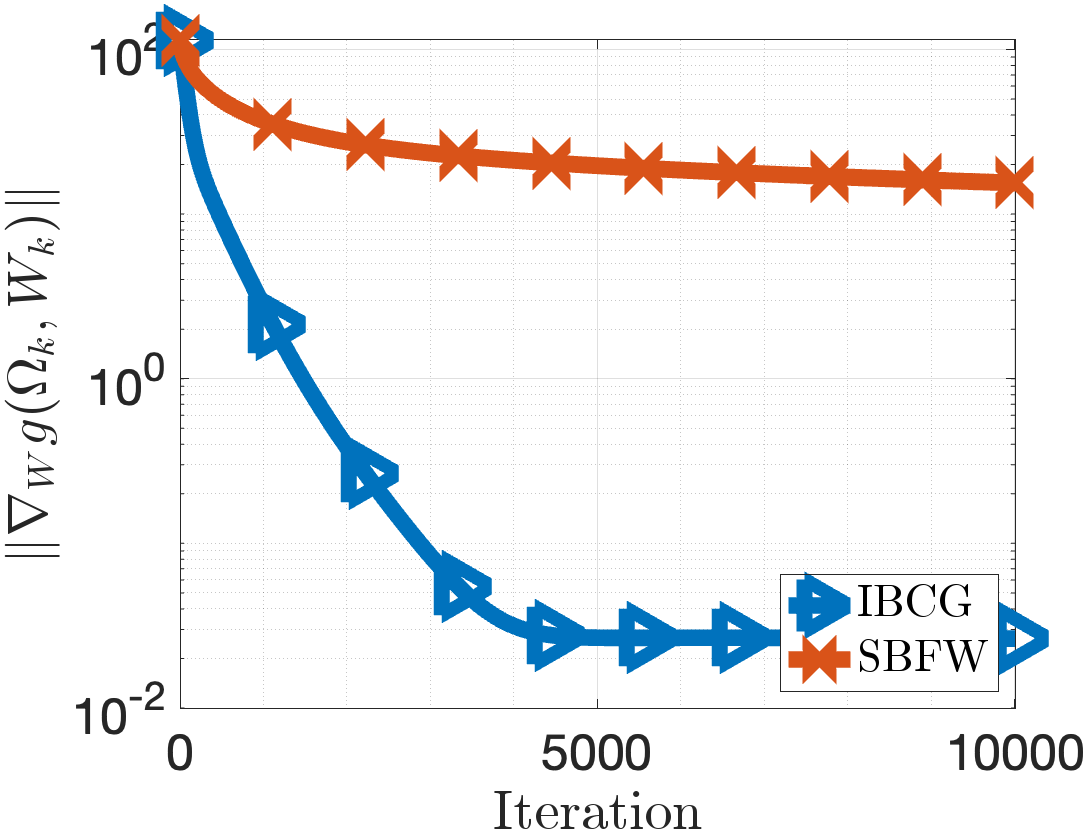}
     \includegraphics[scale=0.3]{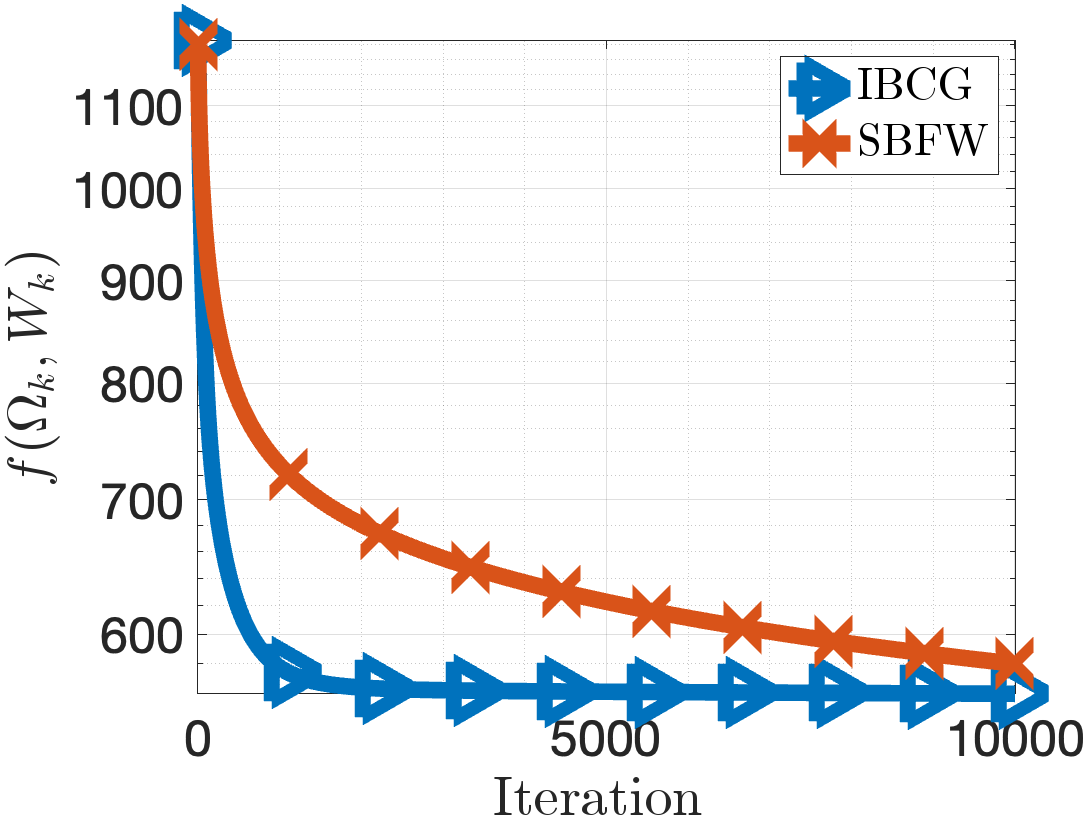}
     \vspace{-2mm}
     \caption{Performance of IBCG  vs SBFW on problem \eqref{ex:MTL_bi} for real dataset1 . Plots from left to right:
      $\|\nabla_W g(\Omega_k,W_k)\|$, 
     and $f(\Omega_k,W_k)$ in terms of number of iterations.}
     \label{fig:MTL_itr}
 \end{figure}
\begin{figure}
    \vspace{-4mm}
     \centering
     \includegraphics[scale=0.3]{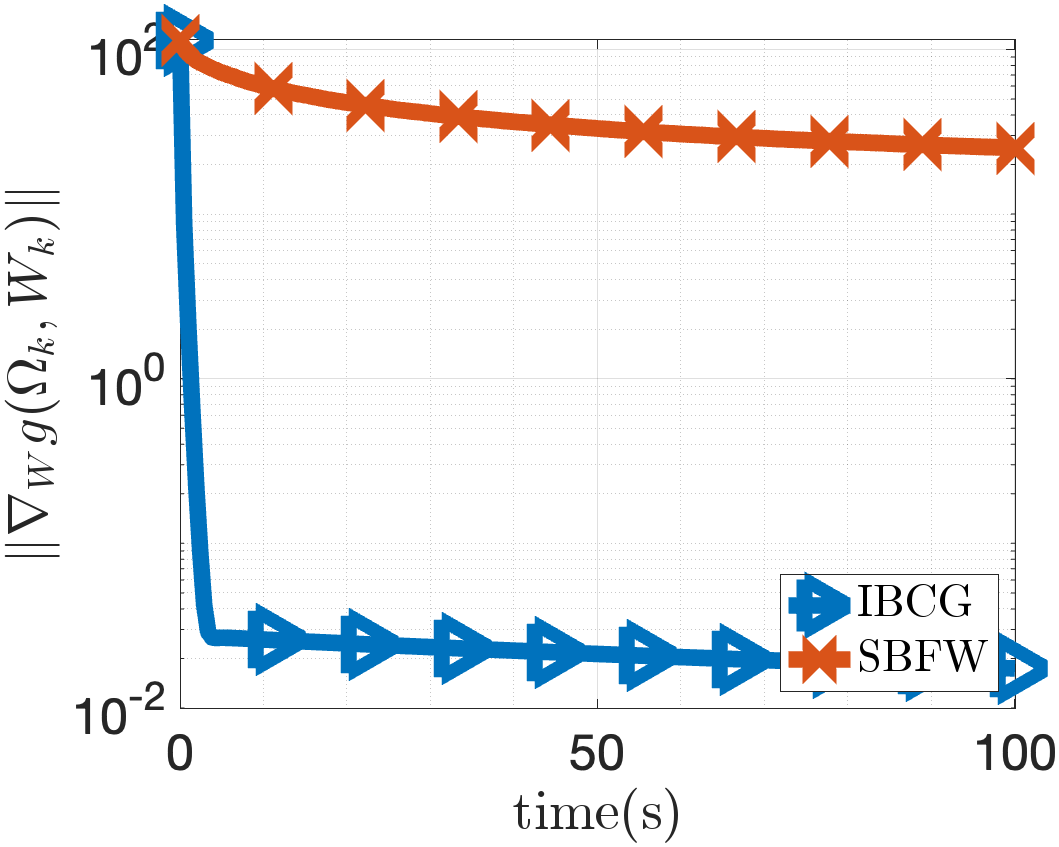}
     \includegraphics[scale=0.3]{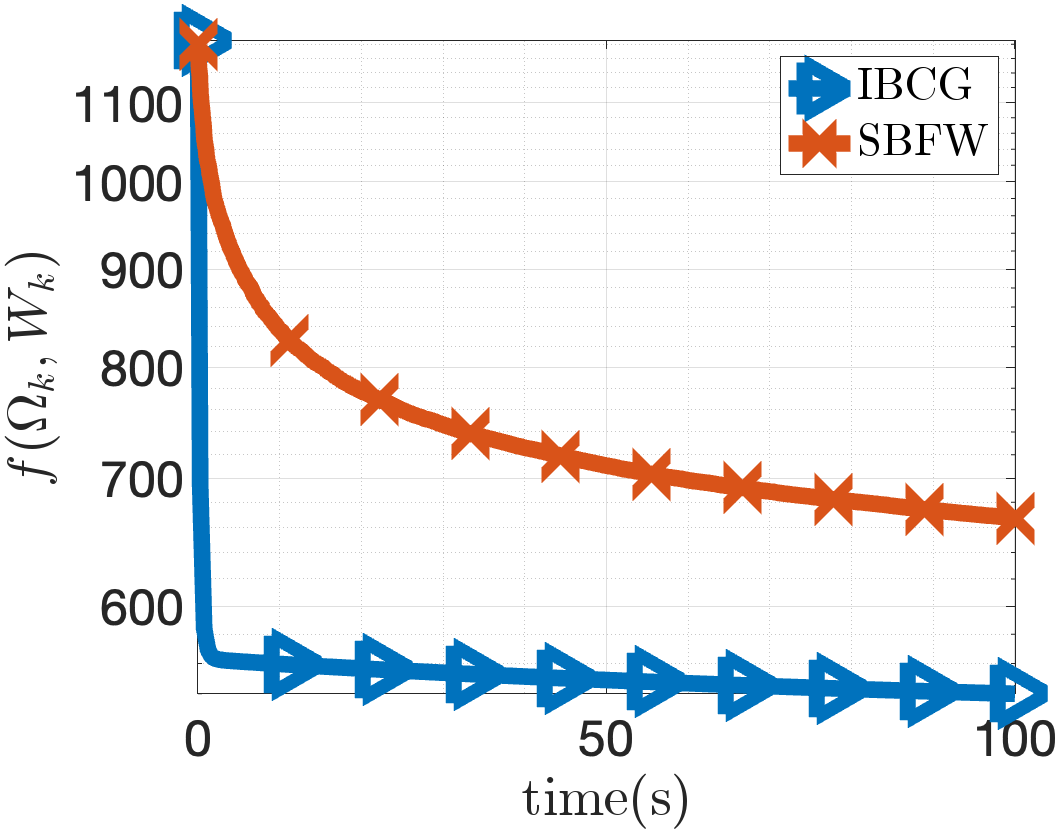}
     \vspace{-2mm}
     \caption{Performance of IBCG  vs SBFW  on problem \eqref{ex:MTL_bi} for real dataset1. Plots from left to right:
      $\|\nabla_W g(\Omega_k,W_k)\|$, 
     and $f(\Omega_k,W_k)$ in terms of running time.}
     \label{fig:MTL_time}
 \end{figure}

\section*{Acknowledgements}
The research of N. Abolfazli and E. Yazdandoost Hamedani is supported by NSF Grant 2127696. The research of R. Jiang and A. Mokhtari is supported in part by NSF Grants 2127697, 2019844, and  2112471,  ARO  Grant  W911NF2110226,  the  Machine  Learning  Lab  (MLL)  at  UT  Austin, and the Wireless Networking and Communications Group (WNCG) Industrial Affiliates Program.

\printbibliography
\newpage
\appendix
\onecolumn

\section*{Appendix}

In section \ref{sec:supporting-lemmas}, we establish technical lemmas based on the assumptions considered in the paper. These lemmas characterize important properties of problem \ref{eq:bilevel}. Notably, Lemma \ref{lem:v_b} is instrumental in understanding the properties associated with problem \ref{eq:bilevel}.
Moving on to section \ref{sec:required-lemma}, we present a series of lemmas essential for deriving the rate results of the proposed algorithm. Among them, Lemma \ref{lem:ell-es} quantifies the error between the approximated direction $F_k$ and $\grad \ell(\bx_k)$. This quantification plays a crucial role in establishing the one-step improvement lemma (see Lemma \ref{lem:one-step}).
Next, we provide the proofs of Theorem \ref{thm:convex-upper-bound} and Corollary \ref{cr:convex_upper-bound} in sections \ref{appen:convex} and \ref{appen:convex_coro}, respectively, that support the results presented in the paper for the convex scenario. Finally, in sections \ref{appen: nonconvex} and \ref{appen:nonconvex_coro} we provide the proofs for Theorem \ref{thm:nonconvex-upper-bound} along with Corollary \ref{cr:nonconvex_upper-bound} for the nonconvex scenario.

\section{Supporting Lemmas}\label{sec:supporting-lemmas}
In this section, we provide detailed explanations and proofs for the lemmas supporting the main results of the paper. 
\subsection{Proof of Lemma~\ref{lem:v_b}}
\textbf{(I)} Recall that $\by^*(\bx)$ is the minimizer of the lower-level problem whose objective function is strongly convex, therefore, 
    \begin{align*}
     \mu_g\norm{\by^*(\bx)-\by^*(\bar\bx)}^2&\leq \langle{\grad_y g(\bx,\by^*(\bx))-\grad_y g(\bx,\by^*(\bar\bx)),\by^*(\bx)-\by^*(\bar\bx)} \rangle \\
     &=\langle{\grad_y g(\bar\bx,\by^*(\bar\bx))-\grad_y g(\bx,\by^*(\bar\bx)),\by^*(\bx)-\by^*(\bar\bx)} \rangle
    \end{align*}
Note that $\grad_y g(\bx,\by^*(\bx))= \grad_y g(\bar\bx,\by^*(\bar\bx))=0$. Using the Cauchy-Schwartz inequality we have
\begin{align*}
    \mu_g\norm{\by^*(\bx)-\by^*(\bar\bx)}^2&\leq\| \grad_y g(\bar\bx,\by^*(\bar\bx))-\grad_y g(\bx,\by^*(\bar\bx))\| \|\by^*(\bx)-\by^*(\bar\bx)\| \\
    & \leq \na{C_{yx}^{g}} 
    \| \bx - \bar{\bx}\| \|\by^*(\bx)-\by^*(\bar\bx)\| 
\end{align*}
where the \ey{last} inequality is obtained by using the \ey{Assumption}~\ref{assum:lower}. 
\ey{Therefore, we conclude that $ \mu_g\norm{\by^*(\bx)-\by^*(\bar\bx)} \leq C_{yx}^{g} \| \bx - \bar{\bx}\| $  which leads to the desired result in part (I).} 

\textbf{(II)} 
We first show that the function $\bx \mapsto \grad_y f(\bx,\by^*(\bx))$ is Lipschitz continuous. To see this, note that for any $\bx,\bar\bx \in \cX$, we have 
\begin{align*}
    \|\grad_y f(\bx,\by^*(\bx))-\grad_y f(\bar\bx,\by^*(\bar\bx))\| &\leq L_{yx}^f\|\bx-\bar\bx\|+L_{yy}^f\|\by^*(\bx)-\by^*(\bar\bx)\| \\
    &\leq \Bigl(L_{yx}^f+\frac{L_{yy}^f \na{C_{yx}^{g}}}{\mu_g}\Bigr)\|\bx-\bar\bx\|,
\end{align*}
where in the last inequality we used Lemma~\ref{lem:v_b}-(I). Since $\cX$ is bounded, we also have $\|\bx-\bar\bx\|\leq D_\cX$. Therefore, letting $\bar \bx = \bx^*$ in the above inequality and using the triangle inequality, we have 
\begin{equation*}
    \|\grad_y f(\bx,\by^*(\bx))\| \leq \Bigl(L_{yx}^f+\frac{L_{yy}^f \na{C_{yx}^{g}}}{\mu_g}\Bigr)D_\cX+ \|\grad_y f(\na{\bx^*},\by^*(\na{\bx^*}))\|.
\end{equation*}
Thus, we complete the proof by letting $C_y^f=\Bigl(L_{yx}^f+\frac{L_{yy}^f \na{C_{yx}^{g}}}{\mu_g}\Bigr)D_\cX+ \|\grad_y f(\na{\bx^*},\by^*(\na{\bx^*}))\|$. 

Before proceeding to show the result of part (III) of Lemma~\ref{lem:v_b}, we first establish an auxiliary lemma stated next.
\begin{lemma}\label{lem:v(x)}
Under the premises of Lemma \ref{lem:v_b}, we have that for any $\bx,\bar\bx\in\cX$, $\norm{\bv(\bx)-\bv(\bar\bx)}\leq \bC_\bv\norm{\bx-\bar \bx}$ for some $\bC_\bv\geq 0$.
\end{lemma}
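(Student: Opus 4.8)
The plan is to bound $\|\bv(\bx) - \bv(\bar\bx)\|$ by exploiting the closed-form expression $\bv(\bx) = [\nabla_{yy}g(\bx,\by^*(\bx))]^{-1}\nabla_y f(\bx,\by^*(\bx))$ and a standard ``add-and-subtract'' decomposition that separates the error in the Hessian-inverse factor from the error in the gradient factor. Writing $\bH(\bx) \triangleq \nabla_{yy}g(\bx,\by^*(\bx))$ and $\bp(\bx)\triangleq \nabla_y f(\bx,\by^*(\bx))$, I would start from
\begin{equation*}
\bv(\bx)-\bv(\bar\bx) = \bH(\bx)^{-1}\bp(\bx) - \bH(\bar\bx)^{-1}\bp(\bar\bx),
\end{equation*}
and insert the cross term $\bH(\bx)^{-1}\bp(\bar\bx)$ to split this into $\bH(\bx)^{-1}\bigl(\bp(\bx)-\bp(\bar\bx)\bigr)$ plus $\bigl(\bH(\bx)^{-1}-\bH(\bar\bx)^{-1}\bigr)\bp(\bar\bx)$.

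For the first term, I would use $\|\bH(\bx)^{-1}\|\leq 1/\mu_g$ from the $\mu_g$-strong convexity in Assumption~\ref{assum:lower}-(iii) together with the Lipschitz continuity of $\bx\mapsto \nabla_y f(\bx,\by^*(\bx))$ that was just established in the proof of part (II), whose modulus is $L_{yx}^f + L_{yy}^f\bL_{\by}$ (here I use part (I) to replace $C_{yx}^g/\mu_g$ by $\bL_{\by}$, matching the constant $\bC_\bv$ in the statement). This yields a contribution of $\tfrac{1}{\mu_g}\bigl(L_{yx}^f + L_{yy}^f\bL_{\by}\bigr)\|\bx-\bar\bx\|$. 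For the second term, I would apply the resolvent identity $\bH(\bx)^{-1}-\bH(\bar\bx)^{-1} = \bH(\bx)^{-1}\bigl(\bH(\bar\bx)-\bH(\bx)\bigr)\bH(\bar\bx)^{-1}$, bound both inverses by $1/\mu_g$, bound $\|\bp(\bar\bx)\|\leq C_y^f$ via part (II), and bound $\|\bH(\bx)-\bH(\bar\bx)\|$ using the Lipschitz continuity of $\nabla_{yy}g$ from Assumption~\ref{assum:lower}-(iv). The subtlety here is that $\nabla_{yy}g$ is Lipschitz jointly in $(\bx,\by)$, so evaluating along the optimal trajectory gives $\|\bH(\bx)-\bH(\bar\bx)\| \leq L_{yy}^g\bigl(\|\bx-\bar\bx\| + \|\by^*(\bx)-\by^*(\bar\bx)\|\bigr) \leq L_{yy}^g(1+\bL_{\by})\|\bx-\bar\bx\|$, again invoking part (I). This produces the contribution $\tfrac{C_y^f L_{yy}^g}{\mu_g^2}(1+\bL_{\by})\|\bx-\bar\bx\|$.

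Summing the two contributions gives exactly $\bC_\bv = \tfrac{L_{yx}^f + L_{yy}^f\bL_{\by}}{\mu_g} + \tfrac{C_y^f L_{yy}^g}{\mu_g^2}(1+\bL_{\by})$, as claimed. I expect the main obstacle to be purely bookkeeping rather than conceptual: one must be careful to invoke part (I) in two distinct places (once inside the Lipschitz modulus of $\bp$ and once inside the modulus of $\bH$) and to correctly chain the joint Lipschitz bound on $\nabla_{yy}g$ through the optimal trajectory, since a sloppy treatment would either double-count the $\bL_{\by}$ factor or omit the ``$1+$'' in $(1+\bL_{\by})$. Everything else is a routine application of the triangle inequality, submultiplicativity of the operator norm, and the uniform lower bound $\mu_g$ on the Hessian's spectrum.
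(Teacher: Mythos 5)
Your proposal is correct and follows essentially the same route as the paper: the same add-and-subtract decomposition with cross term $[\nabla_{yy}g(\bx,\by^*(\bx))]^{-1}\nabla_y f(\bar\bx,\by^*(\bar\bx))$, the same resolvent identity for the difference of Hessian inverses, and the same invocations of parts (I) and (II) of Lemma~\ref{lem:v_b}, yielding the identical constant $\bC_\bv$.
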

\begin{proof}
We start the proof by recalling that $\bv(\bx)=\nabla^2_{yy}g(\bx,\by^*(\bx))^{-1} \nabla_{y}f(\bx,\by^*(\bx))$. Next, adding and subtracting $\nabla^2_{yy} g(\bx, \by^*(\bx))\nabla_y f(\Bar{\bx},\by^*(\bar{\bx}))$ followed by a triangle inequality leads to, 
    \begin{align}\label{eq:bound-v}
        &\| \bv(\bx) - \bv(\Bar{\bx})\| \nonumber
        \\ 
        &= \|[\nabla^2_{yy}g(\bx,\by^*(\bx))]^{-1} \nabla_{y}f(\bx,\by^*(\bx)) - [\nabla^2_{yy}g(\Bar{\bx},\by^*(\Bar{\bx}))]^{-1} \nabla_{y}f(\Bar{\bx},\by^*(\Bar{\bx}))\| \nonumber \\
        &\leq \|[\nabla^2_{yy}g(\bx,\by^*(\bx))]^{-1}\big( \nabla_{y}f(\bx,\by^*(\bx)) - \nabla_{y}f(\Bar{\bx},\by^*(\Bar{\bx})) \big)\| + \|\big([\nabla^2_{yy}g(\bx,\by^*(\bx))]^{-1} \nonumber \\
        &- [\nabla^2_{yy}g(\Bar{\bx},\by^*(\Bar{\bx}))]^{-1} \big)\nabla_{y}f(\Bar{\bx},\by^*(\Bar{\bx}))\| \nonumber \\
        & \leq \frac{1}{\mu_g}\big( {L_{yx}^f}\| \bx - \Bar{\bx}\| + {L_{yy}^f}\|\by^*(\bx) - \by^*(\Bar{\bx})\|\big) + C_{y}^{f}\| [\nabla^2_{yy}g(\bx,\by^*(\bx))]^{-1} - [\nabla^2_{yy}g(\Bar{\bx},\by^*(\Bar{\bx}))]^{-1} \|,
    \end{align}
    where in the last inequality we used Assumptions~\ref{assum:upper} and \ref{assum:lower}-(iii) \na{along with the premises of Lemma~\ref{lem:v_b}-(II)}. 
   Moreover, for any invertible matrices $H_1$ and $H_2$, we have that 
   \begin{equation}\label{eq:invert}
        \| H_2^{-1} -H_1^{-1}\| = \|H_1^{-1} \big( H_1 - H_2\big) H_2^{-1}\| \leq \| H_1^{-1}\| \|H_2^{-1}\| \|H_1 -H_2\|.   
        \end{equation}
   Therefore, using the result of Lemma~\ref{lem:v_b}-(I) and \eqref{eq:invert} we can further bound inequality \eqref{eq:bound-v} as follows,
   \begin{align*}
    &\norm{\bv(\bx)-\bv(\bar\bx)} \nonumber\\
    &\leq \frac{1}{\mu_g}\big(  {L_{yx}^f} \| \bx - \Bar{\bx}\| + {L_{yy}^f} \bL_{\by}\| \bx - \Bar{\bx}\|\big) + C_{y}^{f}\| [\nabla^2_{yy}g(\bx,\by^*(\bx))]^{-1} - [\nabla^2_{yy}g(\Bar{\bx},\by^*(\Bar{\bx}))]^{-1} \| \\
        &\leq \frac{1}{\mu_g} \big( {L_{yx}^{f}} + {L_{yy}^{f}}\bL_{\by} \big)\|\bx -\Bar{\bx}\| + 
        {\frac{C_y^f}{\mu_g^2} L_{yy}^g\bigl( \|\bx-\bar{\bx}\|+\|\by^*(\bx)-\by^*(\bar{\bx})\|\bigr)} \\
        &= \bigl( {\frac{L_{yx}^{f} + L_{yy}^{f}\bL_{\by}}{\mu_g} + \frac{C_{y}^{f}L_{yy}^{g}}{\mu_g^2}(1+\bL_{\by})}\bigr) \|\bx -\Bar{\bx}\|. 
   \end{align*}
The result follows by letting $\bC_\bv=\frac{L_{yx}^{f} + L_{yy}^{f}\bL_{\by}}{\mu_g} + \frac{C_{y}^{f}L_{yy}^{g}}{\mu_g^2}(1+\bL_{\by})$.
\end{proof}
\textbf{(III)} We start proving this part using the definition of $\nabla \ell (\bx)$ stated in \eqref{eq:ell-v}. 
Utilizing the triangle inequality we obtain
     \begin{align}
         &\|\nabla \ell (\bx) - \nabla \ell(\Bar{\bx})\| \nonumber\\
         &\quad = \| \nabla_{x}f(\bx,\by^*(\bx)) - \nabla^2_{yx}g({\bx},\by^*({\bx})) \bv({\bx}) - \big( \nabla_{x}f(\Bar{\bx},\by^*(\Bar{\bx})) - \nabla^2_{yx}g(\Bar{\bx},\by^*(\Bar{\bx})) \bv(\Bar{\bx})\big)\| \nonumber \\
         & \quad \leq \| \nabla_{x}f(\bx,\by^*(\bx)) - \nabla_{x}f(\Bar{\bx},\by^*(\Bar{\bx}))\| + \| \big[\nabla^2_{yx}g(\Bar{\bx},\by^*(\Bar{\bx})) \bv(\Bar{\bx})  - \nabla^2_{yx}g(\Bar{\bx},\by^*(\Bar{\bx})) \bv(\bx)\big] \nonumber \\
         & \qquad + \big[\nabla^2_{yx}g(\Bar{\bx},\by^*(\Bar{\bx})) \bv(\bx)-\nabla^2_{yx}g({\bx},\by^*({\bx})) \bv({\bx})\big]\|
     \end{align}
\na{where the second term of the RHS follows from adding and subtracting the term $\nabla^2_{yx}g(\Bar{\bx}, \by^*(\Bar{\bx}))\bv(\bx)$}. Next, from Assumptions~\ref{assum:upper}-(i) and \ref{assum:lower}-(v) together with the triangle inequality application we conclude that
      \begin{align}
         \|\nabla \ell (\bx) - \nabla \ell(\Bar{\bx})\| &\leq L_{xx}^f \| \bx - \Bar{\bx}\| + L_{xy}^f \| \by^*(\bx) - \by^*(\Bar{\bx})\|\  + C_{yx}^g \|\bv(\Bar{\bx}) - \bv(\bx)\| \nonumber \\
         & \quad + \frac{C_{y}^{f}}{\mu_g} \|\nabla^2_{yx}g(\Bar{\bx},\by^*(\Bar{\bx})) -\nabla^2_{yx}g({\bx},\by^*({\bx}))\| 
         \end{align}
    It should be  that in the last inequality, we use the fact that $\|\bv(\bx)\| = \| [\nabla^2_{yy}g(\bx,\by^*(\bx))]^{-1} \nabla_{y}f(\bx,\by^*(\bx))\| \leq \frac{ C_{y}^{f}}{\mu_g}$. Combining the result of Lemma~\ref{lem:v_b} part (I) and (II) with the Assumption~\ref{assum:lower}-(iv) leads to 
    \begin{align}
         \|\nabla \ell (\bx) - \nabla \ell(\Bar{\bx})\| &\leq L_{xx}^f \| \bx - \Bar{\bx}\| + L_{xy}^f \bL_{\by}\|\bx - \Bar{\bx}\| + C_{yx}^g \bC_{\bv}\|\bx - \Bar{\bx}\| \nonumber\\
         &\quad + \frac{C_{y}^{f}}{\mu_g} L_{yx}^g \bigl( \|\bx-\bar{\bx}\|+\|\by^*(\bx)-\by^*(\bar{\bx})\|\bigr)\nonumber \\
         &\leq L_{xx}^f \| \bx - \Bar{\bx}\| + L_{xy}^f \bL_{\by}\|\bx - \Bar{\bx}\| + C_{yx}^g \bC_{\bv}\|\bx - \Bar{\bx}\| \nonumber\\
         &\quad + \frac{C_{y}^{f}}{\mu_g} L_{yx}^g \bigl( \|\bx-\bar{\bx}\|+\bL_{\by}\|\bx-\bar{\bx}\| \bigr) \nonumber \\
          &\leq \big(L_{xx}^f +{L_{xy}^f} \bL_{\by} +C_{yx}^g \bC_{\bv} +\frac{C_{y}^{f}}{\mu_g} L_{yx}^g(1+\bL_{\by})\big) \|\bx-\bar{\bx}\|
         \end{align}
The desired result can be obtained by letting $\bL_{\ell} = L_{xx}^f +{L_{xy}^f} \bL_{\by} +C_{yx}^g \bC_{\bv} +\frac{C_{y}^{f}}{\mu_g} L_{yx}^g(1+\bL_{\by})$. \qed 

\section{Required Lemmas for Theorems \ref{thm:convex-upper-bound} and \ref{thm:nonconvex-upper-bound}}\label{sec:required-lemma}
 \na{Before we proceed to the proofs of Theorems~\ref{thm:convex-upper-bound} and \ref{thm:nonconvex-upper-bound}, we present the following technical lemmas which quantify the error between the approximated solution $\by_k$ and $\by^*(\bx_k)$, as well as between $\bw_{k+1}$  and $\bv(\bx_{k})$.}
\begin{lemma}\label{lem:lower_GD_C}
    \ey{Suppose Assumption \ref{assum:lower} holds. Let $\{(\bx_k,\by_k)\}_{k\geq 0}$ be the sequence generated by Algorithm \ref{alg:In-BiCoG}, such that 
    $\alpha=2/(\mu_g+L_g)$. Then, for any $k\geq 0$
    \begin{equation}\label{eq:lower_GD}
     \|\by_{k}- \by^*(\bx_{k})\|\leq \beta^k \|\by_0 - \by^*(\bx_0)\| + \bL_{\by}D_\cX\sum_{i=0}^{k-1}  \gamma_i \beta^{k-i}, 
\end{equation}}%
where $\beta\triangleq (L_g-\mu_g)/(L_g+\mu_g)$.
\end{lemma}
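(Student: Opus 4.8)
The plan is to reduce the claim to the textbook contraction estimate for a single gradient-descent step on a strongly convex, smooth function, and then to unroll a scalar recursion. Writing $e_k \triangleq \|\by_k - \by^*(\bx_k)\|$, I would track how this quantity evolves under the coupled updates $\bx_{k+1} = (1-\gamma_k)\bx_k + \gamma_k \bs_k$ and $\by_{k+1} = \by_k - \alpha\, \grad_y g(\bx_{k+1}, \by_k)$, and show it obeys an affine recursion whose solution is exactly \eqref{eq:lower_GD}.

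First I would record the one-step contraction. Since $g(\bx_{k+1}, \cdot)$ is $\mu_g$-strongly convex with $L_g$-Lipschitz gradient by Assumption~\ref{assum:lower}, and its unique minimizer $\by^*(\bx_{k+1})$ satisfies $\grad_y g(\bx_{k+1}, \by^*(\bx_{k+1})) = 0$, the update is precisely one gradient step toward $\by^*(\bx_{k+1})$. Expanding $\|\by_{k+1} - \by^*(\bx_{k+1})\|^2 = \|\by_k - \by^*(\bx_{k+1})\|^2 - 2\alpha \langle \grad_y g(\bx_{k+1},\by_k), \by_k - \by^*(\bx_{k+1})\rangle + \alpha^2\|\grad_y g(\bx_{k+1},\by_k)\|^2$ and invoking the co-coercivity inequality $\langle \grad_y g(\bx_{k+1}, \by_k),\, \by_k - \by^*(\bx_{k+1}) \rangle \geq \frac{\mu_g L_g}{\mu_g + L_g}\|\by_k - \by^*(\bx_{k+1})\|^2 + \frac{1}{\mu_g + L_g}\|\grad_y g(\bx_{k+1}, \by_k)\|^2$, the choice $\alpha = 2/(\mu_g + L_g)$ makes the gradient-norm terms cancel and collapses the coefficient of $\|\by_k - \by^*(\bx_{k+1})\|^2$ to $\beta^2 = ((L_g-\mu_g)/(L_g+\mu_g))^2$. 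Taking square roots yields $\|\by_{k+1} - \by^*(\bx_{k+1})\| \leq \beta\,\|\by_k - \by^*(\bx_{k+1})\|$.

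The second step reconciles the fact that this contraction is centered at $\by^*(\bx_{k+1})$, whereas $e_k$ is measured against $\by^*(\bx_k)$. I would insert $\by^*(\bx_k)$ by the triangle inequality, $\|\by_k - \by^*(\bx_{k+1})\| \leq e_k + \|\by^*(\bx_k) - \by^*(\bx_{k+1})\|$, bound the second term by $\bL_{\by}\|\bx_k - \bx_{k+1}\|$ using the Lipschitz continuity of the solution map from Lemma~\ref{lem:v_b}-(I), and then note $\bx_{k+1} - \bx_k = \gamma_k(\bs_k - \bx_k)$ with $\bs_k, \bx_k \in \cX$, so that $\|\bx_{k+1} - \bx_k\| \leq \gamma_k D_\cX$. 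Combining these gives the affine recursion $e_{k+1} \leq \beta\, e_k + \beta\, \bL_{\by} D_\cX\, \gamma_k$.

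Finally I would unroll this recursion from $e_0 = \|\by_0 - \by^*(\bx_0)\|$, obtaining $e_k \leq \beta^k e_0 + \bL_{\by} D_\cX \sum_{i=0}^{k-1} \beta^{k-i}\gamma_i$, which is exactly \eqref{eq:lower_GD}. I expect the only genuinely delicate point to be the contraction estimate, and in particular the bookkeeping needed because the step contracts toward $\by^*(\bx_{k+1})$ rather than $\by^*(\bx_k)$; the interpolation via the triangle inequality and the solution-map Lipschitz bound is what absorbs the displacement of $\bx$, after which the remaining geometric-sum unrolling is routine.
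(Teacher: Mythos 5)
Your proposal is correct and follows essentially the same route as the paper's own proof: the squared-norm expansion with the co-coercivity inequality to get the $\beta$-contraction toward $\by^*(\bx_{k+1})$, the triangle inequality combined with Lemma~\ref{lem:v_b}-(I) and $\|\bx_{k+1}-\bx_k\|\leq\gamma_k D_\cX$ to recenter at $\by^*(\bx_k)$, and the unrolling of the resulting affine recursion. No substantive differences.
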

\begin{proof}
\ey{We begin the proof by characterizing the one-step progress of the lower-level iterate sequence $\{\by_{k}\}_k$. Indeed, at iteration $k$ we aim to approximate $\by^*(\bx_{k+1})=\argmin_{\by} g(\bx_{k+1},\by)$. According to the update of $\by_{k+1}$ we observe that}
\begin{align}\label{eq:lower_iter}
    \| \by_{k+1}- \by^*(\bx_{k+1}) \|^2 &=  \| \by_k- \by^*(\bx_{k+1}) - \alpha\grad_y g(\bx_{k+1},\by_k) \|^2 \nonumber \\
    &= \| \by_k- \by^*(\bx_{k+1})\|^2 - 2\alpha \langle \grad_y g(\bx_{k+1},\by_k),  \by_k - \by^*(\bx_{k+1}) \rangle \nonumber\\
    &\quad + \alpha^2 \|\grad_y g(\bx_{k+1},\by_k)\|^2.
\end{align}
Moreover, from Assumption \ref{assum:lower} {and following Theorem 2.1.12 in \citet{nesterov2018lectures}}, we have that
\begin{align}\label{eq:g-sc-lip}
    \langle \grad_y g(\bx_{k+1},\by_k), \by_k- \by^*(\bx_{k+1})\rangle \geq \frac{\mu_g L_g}{\mu_g + L_g}\|\by_k- \by^*(\bx_{k+1})\|^2 + \frac{1}{\mu_g + L_g}\|\grad_y g(\bx_{k+1},\by_k)\|^2
\end{align}
The inequality in \eqref{eq:lower_iter} together with \eqref{eq:g-sc-lip} imply that
\begin{align}\label{eq:lower_convrg}
    \| \by_{k+1}- \by^*(\bx_{k+1}) \|^2 
    &\leq  \| \by_k- \by^*(\bx_{k+1})\|^2  - \frac{2\alpha \mu_g L_g}{\mu_g + L_g}\| \by_k- \by^*(\bx_{k+1})\|^2 \nonumber\\
    &\quad +\Big ( \alpha^2 - \frac{2\alpha}{\mu_g + L_g} \Big ) \|\grad_y g(\bx_{k+1},\by_k)\|^2.
\end{align}
Setting the step-size $\alpha = \frac{2}{\mu_g + L_g}$ in \eqref{eq:lower_convrg} leads to
\begin{align}\label{eq:lower_convrg2}
    \|\by_{k+1}- \by^*(\bx_{k+1})\|^2 &\leq \Big(\frac{\mu_g - L_g}{\mu_g + L_g}\Big )^2\|\by_{k}- \by^*(\bx_{k+1})\|^2
\end{align}

\ey{Next, recall that $\beta=(L_g-\mu_g)/(L_g+\mu_g)$. Using the triangle inequality and Part (I) of Lemma \ref{lem:v_b} we conclude that } 
    \begin{align}\label{eq:one-step-triangle}
    \|\by_{k+1}- \by^*(\bx_{k+1})\|&\leq \beta\|\by_{k}- \by^*(\bx_{k+1})\| \nonumber \\
    &\leq \beta\Big [ \|\by_{k}- \by^*(\bx_{k})\| +\|\by^*({\bx_k})- \by^*(\bx_{k+1})\|\Big ] \nonumber \\
    &\leq \beta\Big [ \|\by_{k}- \by^*(\bx_{k})\| + \bL_\by\|\bx_k - \bx_{k+1}\|\Big ] .
\end{align}
\ey{Moreover, from the update of $\bx_{k+1}$ in Algorithm \ref{alg:In-BiCoG}} and boundedness of $\cX$ we have that $\norm{\bx_{k+1}-\bx_k}\leq \gamma_k \ey{D_\cX}$. \ey{Therefore, using this inequality within \eqref{eq:one-step-triangle} leads to 
\begin{equation*}
\norm{\by_{k+1}-\by^*(\bx_{k+1})}\leq \beta\norm{\by_k-\by^*(\bx_k)}+\beta\gamma_k\bL_\by D_\cX.
\end{equation*}
Finally, the desired result can be deduced from the above inequality recursively.}
\end{proof}

Previously, in Lemma~\ref{lem:lower_GD_C} \ey{we quantified} how close the approximation $\by_k$ is from the optimal solution $\by^*(\bx_k)$ of the inner problem. Now, in the following Lemma, we will find an upper bound for the error of approximating $\bv(\bx_k)$ via $\bw_{k+1}$.

\begin{lemma}\label{lem:nu_est}
Let $\{(\bx_k,\bw_k)\}_{k\geq 0}$ be the sequence generated by Algorithm \ref{alg:In-BiCoG}, such that $\gamma_k = \gamma$.
    Define $ \rho_k \triangleq (1- \eta_k \mu_g)$ and $\bC_1 \triangleq L_{yy}^{g} \frac{ C_{y}^{f}}{\mu_g} +  L_{yy}^{f}$. \ey{Under Assumptions \ref{assum:upper} and \ref{assum:lower} 
    we have that for any $k\geq 0$,}
    \begin{equation}
          \| {\bw_{k+1}} - \bv (\bx_{k})\| \leq \rho_k \|{\bw_{k}} - \bv (\bx_{k-1})\| + \rho_k \bC_{\bv}\gamma D_{\cX} + \eta_k \bC_1\big( \beta^k D_0 ^y + \bL_{\by} \gamma  \frac{\beta}{1-\beta} D_{\cX}\big).
    \end{equation}
\end{lemma}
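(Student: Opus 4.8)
The plan is to analyze the one-step gradient-descent update for $\bw$ directly against the \emph{moving} target $\bv(\bx_k)$, isolating a contraction factor together with a residual that measures the inner approximation error. Writing $H_k \triangleq \grad_{yy}g(\bx_k,\by_k)$ and $b_k \triangleq \grad_y f(\bx_k,\by_k)$, the update of Algorithm~\ref{alg:In-BiCoG} reads $\bw_{k+1}=(I-\eta_k H_k)\bw_k+\eta_k b_k$. First I would insert $\bv(\bx_k)$ into the update via the identity $\bv(\bx_k)=(I-\eta_k H_k)\bv(\bx_k)+\eta_k H_k\bv(\bx_k)$, yielding the exact decomposition
\[
\bw_{k+1}-\bv(\bx_k)=(I-\eta_k H_k)\big(\bw_k-\bv(\bx_k)\big)+\eta_k\big(b_k-H_k\bv(\bx_k)\big).
\]
For the first term, since $g(\bx_k,\cdot)$ is $\mu_g$-strongly convex with $L_g$-Lipschitz gradient (Assumption~\ref{assum:lower}), we have $\mu_g I\preceq H_k\preceq L_g I$, so over the admissible step-size range the spectral bound $\norm{I-\eta_k H_k}\le 1-\eta_k\mu_g=\rho_k$ holds.

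The residual $b_k-H_k\bv(\bx_k)$ is where the inner error enters, and here I would exploit that $\bv(\bx_k)$ solves the \emph{starred} system, i.e. $\grad_{yy}g(\bx_k,\by^*(\bx_k))\,\bv(\bx_k)=\grad_y f(\bx_k,\by^*(\bx_k))$. Adding and subtracting $\grad_y f(\bx_k,\by^*(\bx_k))$ gives
\[
b_k-H_k\bv(\bx_k)=\big(\grad_y f(\bx_k,\by_k)-\grad_y f(\bx_k,\by^*(\bx_k))\big)+\big(\grad_{yy}g(\bx_k,\by^*(\bx_k))-\grad_{yy}g(\bx_k,\by_k)\big)\bv(\bx_k).
\]
Bounding the two pieces through the Lipschitz constants $L_{yy}^f$ and $L_{yy}^g$ (Assumptions~\ref{assum:upper}-2 and \ref{assum:lower}-(iv)), together with $\norm{\bv(\bx_k)}\le C_y^f/\mu_g$ (from $\grad_{yy}g(\bx_k,\by^*(\bx_k))\succeq\mu_g I$ and Lemma~\ref{lem:v_b}-(II)), produces exactly $\norm{b_k-H_k\bv(\bx_k)}\le \bC_1\norm{\by_k-\by^*(\bx_k)}$ with $\bC_1=L_{yy}^f+L_{yy}^g C_y^f/\mu_g$.

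Combining these two bounds yields $\norm{\bw_{k+1}-\bv(\bx_k)}\le \rho_k\norm{\bw_k-\bv(\bx_k)}+\eta_k\bC_1\norm{\by_k-\by^*(\bx_k)}$. To reach the stated recursion I would shift the target in the contraction term by the triangle inequality, $\norm{\bw_k-\bv(\bx_k)}\le\norm{\bw_k-\bv(\bx_{k-1})}+\norm{\bv(\bx_{k-1})-\bv(\bx_k)}$, controlling the last increment via the Lipschitz continuity of $\bv$ (Lemma~\ref{lem:v(x)}) and the Frank–Wolfe step bound $\norm{\bx_k-\bx_{k-1}}\le\gamma D_{\cX}$, so that $\norm{\bv(\bx_{k-1})-\bv(\bx_k)}\le \bC_\bv\gamma D_{\cX}$. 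Finally I would substitute the inner-error estimate of Lemma~\ref{lem:lower_GD_C}, which under $\gamma_k=\gamma$ telescopes the geometric sum to $\norm{\by_k-\by^*(\bx_k)}\le \beta^k D_0^y+\bL_\by\gamma\frac{\beta}{1-\beta}D_{\cX}$, giving the claim.

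The main obstacle is the residual bound: one must resist contracting toward the plugged-in solution $\tilde\bv(\bx_k)$ and instead keep the \emph{true} target $\bv(\bx_k)$ throughout, so that the mismatch between the plugged-in pair $(H_k,b_k)$ and the starred pair is measured cleanly by $\norm{\by_k-\by^*(\bx_k)}$. The correct add-and-subtract through $\grad_y f(\bx_k,\by^*(\bx_k))$, using $\grad_{yy}g(\bx_k,\by^*(\bx_k))\bv(\bx_k)=\grad_y f(\bx_k,\by^*(\bx_k))$, is precisely what collapses the constant to $\bC_1$ and explains the $\eta_k$ prefactor multiplying the inner-error term.
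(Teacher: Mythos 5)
Your proposal is correct and follows essentially the same route as the paper: the same decomposition of $\bw_{k+1}-\bv(\bx_k)$ into a contraction term $(I-\eta_k\nabla_{yy}g(\bx_k,\by_k))(\bw_k-\bv(\bx_k))$ plus Hessian- and gradient-mismatch residuals measured by $\|\by_k-\by^*(\bx_k)\|$, followed by the same triangle-inequality shift to $\bv(\bx_{k-1})$ via Lemma~\ref{lem:v(x)} and the substitution of Lemma~\ref{lem:lower_GD_C}. The paper phrases the insertion of $\bv(\bx_k)$ as a fixed-point identity for the gradient map of \eqref{eq:obj-v} rather than your $b_k-H_k\bv(\bx_k)$ residual, but the resulting three terms and constants are identical.
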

\begin{proof}
\ey{From the optimality condition of \eqref{eq:obj-v} one can easily verify that} $\bv(\bx_k) = \bv(\bx_k) - \eta_k \big(\nabla^2_{yy}g(\bx_k,\by^*(\bx_k))\bv(\bx_k) - \nabla_{y}f(\bx_k,\by^*(\bx_k))\big)$. \ey{Now using definition of $\bw_{k+1}$} we can write
    \begin{align}
    \| {\bw_{k+1}} - \bv (\bx_{k})\|  &= \Big\| \Big(\bw_k -\eta_k (\nabla^2_{yy}g(\bx_k,\by_k)\bw_{k} - \nabla_{y}f(\bx_k,\by_k))\Big) - \Big(\bv(\bx_k)   \nonumber \\ 
    &\quad - \eta_k \big(\nabla^2_{yy}g(\bx_k,\by^*(\bx_k))\bv(\bx_k)- \nabla_{y}f(\bx_k,\by^*(\bx_k))\big)\Big)\Big\| \nonumber \\
    &= \Big\|  \Big(I - \eta_k \nabla^2_{yy}g(\bx_k,\by_k)\Big)(\bw_{k} - \bv (\bx_{k})) - \eta_k \Big(\nabla^2_{yy}g(\bx_k,\by_k)  \nonumber \\
    &\quad - \nabla^2_{yy}g(\bx_k,\by^*(\bx_k))\Big)\bv (\bx_{k})+ \eta_k \Big( \nabla_{y}f(\bx_k,\by^*(\bx_k)) -  \nabla_{y}f(\bx_k,\by_k)\Big)\Big\|, \label{pr:w-v-1} 
    \end{align}
    where the last equality is obtained by adding and subtracting the term $(I - \eta_k \nabla^2_{yy}g(\bx_k, \by_k)) \bv(\bx_k)$. Next, using Assumptions~\ref{assum:upper} and \ref{assum:lower} along with the application of the triangle inequality we obtain
    \begin{align}
    \| {\bw_{k+1}} - \bv (\bx_{k})\| &\leq (1 - \eta_k \mu_g)\|{\bw_{k}} - \bv (\bx_{k})\| + \eta_k L_{yy}^{g} \| \by_k - \by^*(\bx_k)\|\|\bv(\bx_k)\| \nonumber\\
    &\quad + \eta_k L_{yy}^{f} \| \by_k - \by^*(\bx_k)\|.
    \end{align}
    Note that $\|\bv(\bx_k)\| = \| [\nabla^2_{yy}g(\bx,\by^*(\bx))]^{-1} \nabla_{y}f(\bx,\by^*(\bx))\| \leq \frac{ C_{y}^{f}}{\mu_g}$. Now, by adding and subtracting $\bv(\bx_{k-1})$ to the term $\| {\bw_{k}} - \bv (\bx_{k})\|$  followed by triangle inequality application we can conclude that
    \begin{align}
      \| {\bw_{k+1}} - \bv (\bx_{k})\| &\leq (1 - \eta_k \mu_g)\|{\bw_{k}} - \bv (\bx_{k-1})\|+(1 - \eta_k \mu_g)\|\bv (\bx_{k-1}) - \bv (\bx_{k})\| \nonumber\\
      & \quad+ \eta_k \Big( L_{yy}^{g} \frac{ C_{y}^{f}}{\mu_g} +  L_{yy}^{f}\Big) \| \by_k - \by^*(\bx_k)\| .\label{pr:w-v-3}
    \end{align}
   Therefore, using the result of Lemma~\ref{lem:lower_GD_C},  we can further bound inequality~\eqref{pr:w-v-3} as follows
   \begin{align}
     \| {\bw_{k+1}} - \bv (\bx_{k})\| &\leq (1 - \eta_k \mu_g)\|{\bw_{k}} - \bv (\bx_{k-1})\|+(1 - \eta_k \mu_g)\bC_{\bv}\| \bx_{k-1} - \bx_k\|  \nonumber \\ 
     &\quad +  \eta_k \bC_1\| \by_k - \by^*(\bx_k)\| \nonumber\\
     &\leq \rho_k \|{\bw_{k}} - \bv (\bx_{k-1})\| + \rho_k \bC_{\bv}\gamma D_{\cX} + \eta_k \bC_1\big( \beta^k D_0 ^y + \bL_{\by}   \gamma  \frac{\beta}{1-\beta}D_{\cX}\big)
     \end{align}
     where the last inequality follows from the boundedness assumption of set $\cX$, recalling that  $D_0^y=\| \by_0 - \by^*(\bx_0)\| $, and the fact that $\sum_{i=0}^{k-1} \beta^{k-i}  \leq \frac{\beta}{1-\beta}$.
\end{proof}

\begin{lemma}\label{lem:nu_est_K}
Let $\{(\bx_k,\bw_k)\}_{k\geq 0}$ be the sequence generated by Algorithm \ref{alg:In-BiCoG} with step-size $\eta_k=\eta< \frac{1-\beta}{\mu_g}$ where $\beta$ is defined in Lemma \ref{lem:lower_GD_C}. 
Suppose that Assumption~\ref{assum:lower} holds and  $\bv(\bx_{-1}) = \bv(\bx_0)$, then for any $K\geq 1$, 
    \begin{align}
     \| \bw_{K} - \bv(\bx_{K-1})\| \leq \rho^K \|{\bw_0} - \bv(\bx_0) \| + \frac{\gamma \rho  \bC_{\bv} D_{\cX}}{1-\rho} + \frac{\eta \bC_1 D_0^y \rho^{K+1}}{\rho - \beta} +\frac{\gamma \eta \beta \bC_1 \bL_{\by} D_{\cX} }{(1-\rho) (1-\beta)} ,
    \end{align}
    where $\rho\triangleq 1-\eta\mu_g$. 
\end{lemma}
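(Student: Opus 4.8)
The plan is to read the one-step estimate of Lemma~\ref{lem:nu_est} as a scalar affine recursion in the error sequence and unroll it. First I would introduce the shorthand $e_k \triangleq \|\bw_k - \bv(\bx_{k-1})\|$, so that the convention $\bv(\bx_{-1}) = \bv(\bx_0)$ gives the initial value $e_0 = \|\bw_0 - \bv(\bx_0)\|$. Specializing Lemma~\ref{lem:nu_est} to the constant step-size $\eta_k = \eta$ (hence $\rho_k = \rho = 1-\eta\mu_g$) puts the bound in the form
\begin{equation*}
e_{k+1} \leq \rho\, e_k + a\beta^k + b, \qquad a \triangleq \eta\bC_1 D_0^y, \quad b \triangleq \rho\bC_\bv\gamma D_\cX + \eta\bC_1\bL_\by\gamma\tfrac{\beta}{1-\beta}D_\cX,
\end{equation*}
that is, a one-dimensional recursion with a geometrically decaying forcing term $a\beta^k$ and a constant forcing term $b$.

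Next I would unroll this recursion from $k=0$ up to $k=K$, which gives
\begin{equation*}
e_K \leq \rho^K e_0 + a\sum_{i=0}^{K-1}\rho^{K-1-i}\beta^i + b\sum_{i=0}^{K-1}\rho^{K-1-i}.
\end{equation*}
The term $\rho^K e_0$ reproduces the leading $\rho^K\|\bw_0 - \bv(\bx_0)\|$ summand. The constant-forcing sum is a plain geometric series, $\sum_{i=0}^{K-1}\rho^{K-1-i} = \frac{1-\rho^K}{1-\rho} \leq \frac{1}{1-\rho}$ (using $0<\rho<1$, which holds since $\eta<\frac{1-\beta}{\mu_g}<\frac{1}{\mu_g}$); substituting the two pieces of $b$ then yields exactly the $\frac{\gamma\rho\bC_\bv D_\cX}{1-\rho}$ and $\frac{\gamma\eta\beta\bC_1\bL_\by D_\cX}{(1-\rho)(1-\beta)}$ terms of the claimed bound.

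The crux of the argument, and the step I expect to be the main obstacle, is the mixed sum $\sum_{i=0}^{K-1}\rho^{K-1-i}\beta^i$. A reindexing shows it equals $\frac{\rho^K-\beta^K}{\rho-\beta}$, and this is precisely where the step-size restriction $\eta<\frac{1-\beta}{\mu_g}$ is essential: it forces $\rho = 1-\eta\mu_g > \beta$, so the denominator $\rho-\beta$ is strictly positive and the closed form is both valid and bounded, $\frac{\rho^K-\beta^K}{\rho-\beta}\le \frac{\rho^K}{\rho-\beta}$ (as $\beta^K\ge 0$). Multiplying by $a=\eta\bC_1 D_0^y$ produces the remaining $D_0^y$-dependent term of the desired form. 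The only real subtleties are verifying $\rho>\beta$ from the step-size condition and keeping all bounds uniform in $K$ by discarding the $1-\rho^K$ and $-\beta^K$ corrections; once those are handled, collecting the four contributions and re-substituting the definitions of $a$ and $b$ completes the proof, with the rest being routine bookkeeping of constants.
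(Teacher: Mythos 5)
Your proof is correct and follows essentially the same route as the paper's: specialize Lemma~\ref{lem:nu_est} to constant $\eta$, unroll the resulting affine recursion, bound the constant-forcing geometric series by $1/(1-\rho)$, and use $\rho>\beta$ (guaranteed by $\eta<(1-\beta)/\mu_g$) to control the mixed sum $\sum_i \rho^{K-1-i}\beta^i$. The one discrepancy is that your correctly indexed unrolling yields $\eta\bC_1 D_0^y\rho^{K}/(\rho-\beta)$ for the $D_0^y$ term where the lemma states $\rho^{K+1}$; the paper obtains the extra factor of $\rho$ only because its recursion is unrolled with the sum running to $i=K$ with weight $\rho^{K-i}$, and this factor is immaterial to every downstream use of the lemma.
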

\begin{proof}
Applying the result of Lemma~\ref{lem:nu_est} recursively for $k=0$ to $K-1$, one can conclude that 
\begin{align}
     \| \bw_{K} - \bv(\bx_{K-1})\| &\leq \rho^K \|{\bw_0} - \bv(\bx_0) \| + \bC_{\bv} \gamma D_{\cX} \sum _ {i=1} ^ K \rho^i+ \eta \bC_1 \sum _{i=0}^K \big( \beta^i D_0^y + \gamma \bL_{\by} D_{\cX} \frac{\beta}{1-\beta}\big) \rho^{K-i}  \nonumber \\
     &\leq   \rho^K \|{\bw_0} - \bv(\bx_0) \| + \frac{\rho}{1- \rho}\bC_{\bv} \gamma D_{\cX} + \eta \bC_1  D_0^y \big(\sum _{i=0}^K  \beta^i\rho^{K-i}\big) \nonumber\\
     &\quad + \frac{\gamma \eta \beta \bC_1 \bL_{\by} D_{\cX} }{1-\beta}\sum _{i=0}^K\rho^{K-i} \label{pr:nu-es-K-1},
    \end{align}
    where the last inequality is obtained by noting that $\sum_{i=1}^{K} \rho^{i}  \leq \frac{\rho}{1-\rho}$. Finally, the choice $\eta<\frac{1-\beta}{\mu_g}$ implies that $\beta < \rho$, hence,  $\sum_{i=0}^{K} (\frac{\beta}{\rho})^i  \leq \frac{\rho}{\rho-\beta}$ which leads to the desired result.
\end{proof}
\subsection{Proof of Lemma~\ref{lem:ell-es}}\label{proof:lemma-grad-est}
We begin the proof by considering the definition of $\nabla \ell (\bx_k)$ and $F_k$ followed by a triangle inequality to obtain
    \begin{align}
        \| \nabla \ell (\bx_k) - F_k\|
        & \leq \|\nabla_{x}f(\bx_k,\by^*(\bx_k)) - \nabla_{x}f(\bx_k, \by_k)\| \nonumber\\
        &\quad + \| \nabla^2_{yx}g(\bx_k, \by_k)\bw_{k+1} - \nabla^2_{yx}g(\bx_k,\by^*(\bx_k)) \bv(\bx_k)\| \label{pr:ell_est_1} 
        \end{align}
Combining Assumption~\ref{assum:upper}-(i) together with adding and subtracting  $\nabla^2_{yx}g(\bx_k,\by_k)\bv(\bx_k)$ to the second term of RHS
lead to 
    \begin{align}\label{eq:ell-F-first-bound}
        \| \nabla \ell (\bx_k) - F_k\| &\leq \na{L_{xy}^{f}} \| \by_k - \by^*(\bx_k)\| + \| \nabla^2_{yx}g(\bx_k, \by_k)\big( \bw_{k+1} - \bv(\bx_k)\big) + \big( \nabla^2_{yx}g(\bx_k, \by_k) \nonumber \\
        &\quad - \nabla^2_{yx}g(\bx_k,\by^*(\bx_k)) \big)  \bv(\bx_k)\| \nonumber \\
        &\leq \na{L_{xy}^{f}} \| \by_k - \by^*(\bx_k)\| + C_{yx}^{g}\|\bw_{k+1} - \bv(\bx_k)\| +   L_{yx}^{g} \frac{C_{y}^{f}}{\mu_g} \| \by_k - \by^*(\bx_k)\| 
        \end{align}
where the last inequality is obtained using Assumption~\ref{assum:lower} and the triangle inequality. Next, utilizing Lemma~\ref{lem:lower_GD_C} and \ref{lem:nu_est_K} we can further provide upper-bounds for the term in RHS of \eqref{eq:ell-F-first-bound} as follows
    \begin{align*}
       \| \nabla \ell (\bx_k) - F_k\| 
        &\leq \bC_2 \big( \beta^k D_0^y  + \frac{\gamma \beta \bL_{\by} D_{\cX} }{1-\beta} \big)+ C_{yx}^{g} \Big(\rho^{k+1} \|{\bw_0} - \bv(\bx_0) \| + \frac{\gamma \rho  \bC_{\bv} D_{\cX}}{1-\rho} \nonumber \\
         &\quad + \frac{\eta \bC_1 D_0^y \rho^{k+2}}{\rho - \beta}+\frac{\gamma \eta \beta \bC_1 \bL_{\by} D_{\cX} }{(1-\rho)(1-\beta)}\Big) . 
    \end{align*}%
    \qed
\subsection{Improvement in one step} 
In the following, we characterize the improvement of the objective function $\ell(\bx)$ after taking one step of Algorithm~\ref{alg:In-BiCoG}.
\begin{lemma}\label{lem:one-step}
	Let $\{\bx_k\}_{k=0}^{K}$ be the sequence generated by Algorithm~\ref{alg:In-BiCoG}. Suppose Assumptions \ref{assum:upper} and \ref{assum:lower} hold and $\gamma_k = \gamma$, then for any $k\geq 0$ we have 
	\begin{align}\label{eq:one-step-f}
	\ell(\bx_{k+1}) &\leq \ell(\bx_{k}) - \gamma \mathcal{G}          (\bx_k) + \gamma \bC_2 \beta^k D_0^y D_{\cX} + \frac{\gamma^2 \bC_2 D_{\cX}^2       \bL_{\by} \beta}{1-\beta} +   C_{yx}^{g}\Big[ \gamma D_{\cX}            \rho^{k+1} \|{\bw_0} - \bv(\bx_0) \|  \nonumber \\
        &\quad + \frac{\gamma^2 D_{\cX}^2      \rho \bC_{\bv}}{1-\rho}+ \frac{\gamma D_{\cX} D_0^y \bC_1\eta \rho^{k+2}}{\rho - \beta} + \frac{\gamma^2 D_{\cX}^2 \bL_{\by} \bC_1 \beta \eta}{(1-\beta)(1-\rho)}\Big] + \frac{1}{2} \bL_{\ell} \gamma^2 D_{\cX}^2.
	\end{align}
\end{lemma}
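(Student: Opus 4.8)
The plan is to read the update $\bx_{k+1}=(1-\gamma)\bx_k+\gamma\bs_k$ as an \emph{inexact} Frank--Wolfe step whose direction is produced by the surrogate $F_k$ rather than the true gradient $\grad\ell(\bx_k)$, and to run the standard conditional-gradient descent argument while carrying the gradient-approximation error explicitly. First I would invoke the $\bL_{\ell}$-smoothness of $\ell$ from Lemma~\ref{lem:v_b}-(III). Since $\bx_{k+1}-\bx_k=\gamma(\bs_k-\bx_k)$, the smoothness (quadratic upper bound) inequality gives
\begin{equation*}
\ell(\bx_{k+1})\leq \ell(\bx_k)+\gamma\langle\grad\ell(\bx_k),\bs_k-\bx_k\rangle+\frac{1}{2}\bL_{\ell}\gamma^2\|\bs_k-\bx_k\|^2.
\end{equation*}
Boundedness of $\cX$ yields $\|\bs_k-\bx_k\|\leq D_{\cX}$, which already accounts for the final term $\frac{1}{2}\bL_{\ell}\gamma^2 D_{\cX}^2$ in the claimed bound.

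The crux is to replace the linear term $\langle\grad\ell(\bx_k),\bs_k-\bx_k\rangle$ by $-\mathcal{G}(\bx_k)$ plus a controllable residual. Let $\bs_k^\star\in\argmax_{\bs\in\cX}\langle\grad\ell(\bx_k),\bx_k-\bs\rangle$, so that $\langle\grad\ell(\bx_k),\bs_k^\star-\bx_k\rangle=-\mathcal{G}(\bx_k)$ by the definition of the Frank--Wolfe gap. I would then write $\langle\grad\ell(\bx_k),\bs_k\rangle=\langle F_k,\bs_k\rangle+\langle\grad\ell(\bx_k)-F_k,\bs_k\rangle$, use the optimality of $\bs_k$ for the linear minimization oracle, namely $\langle F_k,\bs_k\rangle\leq\langle F_k,\bs_k^\star\rangle$, and add and subtract $\grad\ell(\bx_k)$ inside $\langle F_k,\bs_k^\star\rangle$. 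After cancellation this produces
\begin{equation*}
\langle\grad\ell(\bx_k),\bs_k-\bx_k\rangle\leq -\mathcal{G}(\bx_k)+\langle\grad\ell(\bx_k)-F_k,\bs_k-\bs_k^\star\rangle.
\end{equation*}
Cauchy--Schwarz together with $\|\bs_k-\bs_k^\star\|\leq D_{\cX}$ (both points lie in $\cX$) bounds the residual by $\|\grad\ell(\bx_k)-F_k\|\,D_{\cX}$.

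Combining the two displays gives
\begin{equation*}
\ell(\bx_{k+1})\leq \ell(\bx_k)-\gamma\mathcal{G}(\bx_k)+\gamma D_{\cX}\|\grad\ell(\bx_k)-F_k\|+\frac{1}{2}\bL_{\ell}\gamma^2 D_{\cX}^2,
\end{equation*}
and the last step is simply to substitute the nested-approximation bound on $\|\grad\ell(\bx_k)-F_k\|$ supplied by Lemma~\ref{lem:ell-es}. Multiplying that bound by $\gamma D_{\cX}$ reproduces the remaining terms displayed in the lemma, with the explicit constants ($\bC_1,\bC_2,\bC_{\bv},\bL_{\by}$, and the geometric factors in $\beta$ and $\rho$) inherited verbatim from Lemma~\ref{lem:ell-es}.

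I expect no deep obstacle here; the only place requiring care is the bookkeeping in the inner-product decomposition. One must route the LMO optimality through the comparison point $\bs_k^\star$ so that the error couples to $\bs_k-\bs_k^\star$ rather than to $\bs_k-\bx_k$, which keeps the coefficient of $\|\grad\ell(\bx_k)-F_k\|$ equal to one; a cruder split (bounding $\langle F_k,\bs_k-\bx_k\rangle$ and the error separately) would be harmless but would loosen the constant by a factor of two. The remaining manipulations—the $D_{\cX}$ bounds from compactness of $\cX$ and the direct insertion of Lemma~\ref{lem:ell-es}—are routine.
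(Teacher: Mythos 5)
Your proposal is correct and follows essentially the same route as the paper's proof: the $\bL_{\ell}$-smoothness bound from Lemma~\ref{lem:v_b}-(III), the add-and-subtract of $F_k$ with the LMO optimality of $\bs_k$ routed through the gap's maximizer so the error couples to $\bs_k-\bs_k^\star$, Cauchy--Schwarz with the diameter bound, and finally the substitution of Lemma~\ref{lem:ell-es}. The only difference is cosmetic: the paper leaves the comparison point as an unnamed $\bs\in\cX$ where you name it $\bs_k^\star$ explicitly.
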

\begin{proof}
Note that according to Lemma \ref{lem:v_b}-(III), $\ell(\cdot)$ has a Lipschitz continuous gradient which implies that 
\begin{align}
    \ell(\bx_{k+1}) &\leq 
    \ell(\bx_{k}) + \gamma \langle{\nabla \ell(\bx_{k}), \bs_{k} - \bx_k}\rangle +\frac{1}{2}\bL_{\ell}\gamma^2\|\bs_{k} - \bx_k\|^2\nonumber \\
    &= \ell(\bx_{k}) + \gamma \langle{F_k, \bs_{k} - \bx_k}\rangle + \gamma \langle{\nabla \ell (\bx_k) - F_k, \bs_{k} - \bx_k}\rangle +\frac{1}{2}\bL_{\ell}\gamma^2\|\bs_{k} - \bx_k\|^2 ,\label{pr:step-fw-1} 
 \end{align}
 where the last inequality follows from adding and subtracting the term $\gamma \langle{F_k, \bs_{k} - \bx_k}\rangle$ to the RHS. Define $\bs'_k = \argmax_{\bs\in \cX} \{\langle \grad \ell(\bx_k),\bx_k-\bs\rangle \}$ and observe that $\mathcal{G}(\bx_k) = \langle \grad \ell(\bx_k),\bx_k-\bs'_k\rangle$ by Definition~\ref{def:FW_gap}. Using the definition of $\bs_k$,  
 we can immediately observe that
 \begin{align}
    \langle{F_k, {\bs}_{k} - {\bx}_k}\rangle 
     &= \min_{{\bs} \in \mathcal{X}} \langle{F_k, {\bs} - {\bx}_k}\rangle \nonumber \\ 
     &\leq \langle{F_k, {\bs_{k}^{\prime}}- {\bx_k}}\rangle \nonumber \\ 
     &= \langle{\nabla \ell({\bx_k}), {\bs_{k}^{\prime}}- {\bx_k}}\rangle +\langle{ F_k -\nabla \ell({\bx_k}), {\bs_{k}^{\prime}}- {\bx_k}}\rangle \nonumber \\
     &= - \cG(\bx_k) + \langle{ F_k -\nabla \ell({\bx}_k), {\bs_{k}^{\prime}}- {\bx_k}}\rangle.
     \label{pr:fw_upper_bound}
\end{align}

Next, combining \eqref{pr:step-fw-1} with \eqref{pr:fw_upper_bound} followed by the Cauchy-Schwartz inequality leads to 
    \begin{align}
         \ell({\bx}_{k+1}) \leq \ell({\bx}_{k}) - \gamma \mathcal{G}({\bx}_k)+\gamma  \|{\nabla \ell({\bx}_{k}) -F_k\| \|{\bs}_{k} - {\bs_{k}^{\prime}}}\|+\frac{1}{2} {\bL}_{\ell}\gamma^2\|{\bs}_{k} - {\bx}_k\|^2.\label{pr:step-fw-3} 
     \end{align}     
Finally, using the result of the Lemma~\ref{lem:ell-es}  together with the boundedness assumption of set $\cX$ we conclude the desired result.
\end{proof}

\section{Proof of Theorem~\ref{thm:convex-upper-bound}}\label{appen:convex}
Since $\ell $ is convex, from the definition of $\mathcal{G}(\bx_k)$ in \eqref{eq:FW_gap} we have
\begin{align}\label{eq:conv-G}
    \mathcal{G}(\bx_k) = \max_{\bs\in \cX}\{\langle {\grad \ell(\bx_k),\bx_k-\bs}\}\rangle \geq \langle{\grad \ell(\bx_k),\bx_k-\bx^*}\rangle \geq \ell(\bx_k)- \ell(\bx^*).
\end{align}
We assume a fixed step-size in Theorem~\ref{thm:convex-upper-bound} and we set $\gamma_k=\gamma$. Combining the result of Lemma~\ref{lem:one-step} with \eqref{eq:conv-G} leads to
    \begin{align}\label{eq:convex-one-step1}
	\ell(\bx_{k+1}) &\leq \ell(\bx_{k}) - \gamma (\ell(\bx_k ) - \ell(\bx^*)) + \gamma \bC_2 \beta^k D_0^y D_{\cX} + \frac{\gamma^2 \bC_2 D_{\cX}^2       \bL_{\by} \beta}{1-\beta} +   C_{yx}^{g}\Big[ \gamma D_{\cX}            \rho^{k+1} \|{\bw_0} - \bv(\bx_0) \| \nonumber \\
    &\quad  + \frac{\gamma^2 D_{\cX}^2      \rho \bC_{\bv}}{1-\rho} 
    + \frac{\gamma D_{\cX} D_0^y \bC_1\eta \rho^{k+2}}{\rho - \beta} + \frac{\gamma^2 D_{\cX}^2 \bL_{\by} \bC_1 \beta \eta}{(1-\beta)(1-\rho)}\Big] + \frac{1}{2} \bL_{\ell} \gamma^2 D_{\cX}^2.
	\end{align}
Subtracting $\ell(\bx^*)$ from both sides, we get 
     \begin{align}\label{eq:convex-one-step2}
    \ell(\bx_{k+1}) - \ell(\bx^*) &\leq (1- \gamma )( \ell(\bx_k) - \ell(\bx^* )) + \cR_k(\gamma),
	\end{align}
 where
 \begin{align}\label{eq:R_def}
     \cR_k(\gamma)&\triangleq \gamma \bC_2 \beta^k D_0^y D_{\cX} + \frac{\gamma^2 \bC_2 D_{\cX}^2       \bL_{\by} \beta}{1-\beta} +   C_{yx}^{g}\Big[ \gamma D_{\cX}            \rho^{k+1} \|{\bw_0} - \bv(\bx_0) \| \nonumber \\
    & \quad + \frac{\gamma^2 D_{\cX}^2      \rho \bC_{\bv}}{1-\rho} 
    + \frac{\gamma D_{\cX} D_0^y \bC_1\eta \rho^{k+2}}{\rho - \beta} + \frac{\gamma^2 D_{\cX}^2 \bL_{\by} \bC_1 \beta \eta}{(1-\beta)(1-\rho)}\Big] + \frac{1}{2} \bL_{\ell} \gamma^2 D_{\cX}^2.
 \end{align}
Continuing \eqref{eq:convex-one-step2} recursively leads to the desired result. \qed

\section{Proof of Corollary~\ref{cr:convex_upper-bound}}\label{appen:convex_coro}
We start the proof by using the result of the Theorem~\ref{cr:convex_upper-bound}, i.e., 
\begin{align}\label{eq:thm-conv-result}
    \ell(\bx_K) - \ell(\bx^*) &\leq (1- \gamma )^K( \ell(\bx_0) - \ell(\bx^* )) + \sum_{k=0}^{K-1}(1-\gamma)^{K-k}\cR_k(\gamma).
\end{align}
Note that 
\begin{align*}
     &\sum_{k=0}^{K-1}(1-\gamma)^{K-k}\cR_k(\gamma)\nonumber\\
     &=  \bC_2  D_0^y D_{\cX} \Big[\sum_{k=0}^{K-1}(1-\gamma)^{K-k} \gamma \beta^k \Big] + \frac{\bC_2 D_{\cX}^2 \bL_{\by} \beta}{1-\beta} \Big[\sum_{k=0}^{K-1}(1-\gamma)^{K-k} \gamma^2 \Big]  \nonumber \\
     & \quad +   C_{yx}^{g}\Big(  \rho D_{\cX}             \|{\bw_0} - \bv(\bx_0) \| \Big[\sum_{k=0}^{K-1}(1-\gamma)^{K-k} \gamma \rho^{k} \Big]  + \frac{ D_{\cX}^2      \rho \bC_{\bv}}{1-\rho}\Big[\sum_{k=0}^{K-1}(1-\gamma)^{K-k} \gamma^2 \Big] \nonumber \\ 
    & \quad + \frac{ D_{\cX} D_0^y \bC_1\eta \rho^{2}}{\rho - \beta} \Big[\sum_{k=0}^{K-1}(1-\gamma)^{K-k} \gamma \rho^k \Big]+ \frac{ D_{\cX}^2 \bL_{\by} \bC_1 \beta \eta}{(1-\beta)(1-\rho)} \Big[\sum_{k=0}^{K-1}(1-\gamma)^{K-k} \gamma^2 \Big]\Big)  \nonumber \\
    & \quad + \frac{1}{2} \bL_{\ell} D_{\cX}^2 \Big[\sum_{k=0}^{K-1}(1-\gamma)^{K-k} \gamma^2 \Big].
 \end{align*}

Moreover, one can easily verify that $\sum_{k=0}^{K-1}(1-\gamma)^{K-k}\gamma^2\leq \gamma(1-\gamma)$ and $\sum_{k=0}^{K-1}(1-\gamma)^{K-k}\gamma\rho^k\leq \frac{\gamma(1-\gamma)}{\abs{1-\gamma-\rho}}$ from which together with the above inequality we conclude that 
\begin{align}
     &\sum_{k=0}^{K-1}(1-\gamma)^{K-k}\cR_k(\gamma)\nonumber\\
     &\leq   \frac{ \bC_2  D_0^y D_{\cX}\gamma(1-\gamma)}{\abs{1-\gamma-\beta}} + \frac{ \bC_2 D_{\cX}^2 \bL_{\by} \beta \gamma(1-\gamma)}{1-\beta}  +   C_{yx}^{g}\Big(  \frac{   D_{\cX} \rho \gamma(1-\gamma)}{\abs{1-\gamma-\rho}} \|{\bw_0} - \bv(\bx_0) \|  \nonumber \\ 
     & \quad + \frac{ D_{\cX}^2 \bC_{\bv} \rho \gamma(1-\gamma) }{1-\rho}  + \frac{ D_{\cX} D_0^y \bC_1\eta \rho^{2} \gamma(1-\gamma)}{(\rho - \beta) \abs{1-\gamma-\rho}} + \frac{ D_{\cX}^2 \bL_{\by} \bC_1 \eta \beta \gamma(1-\gamma)}{(1-\beta)(1-\rho)} \Big) + \frac{1}{2} \bL_{\ell} D_{\cX}^2\gamma(1-\gamma) \nonumber \\
     &= \cO\left(\frac{\bC_{\bv} \rho }{1-\rho}\gamma+\frac{\bL_{\by} \bC_1 \beta}{(1-\beta)(1-\rho)}\gamma\right).
 \end{align}
Using the above inequality within \eqref{eq:thm-conv-result} we conclude that $\ell(\bx_K) - \ell(\bx^*) \leq (1- \gamma )^K( \ell(\bx_0) - \ell(\bx^* )) + \cO(\frac{\bC_{\bv} \rho }{1-\rho}\gamma+\frac{\bL_{\by} \bC_1 \beta}{(1-\beta)(1-\rho)}\gamma)$ where $\mathbf{C}_{\mathbf{v}} = \cO(\kappa_g^3)$, $\bC_1=\cO(\kappa_g^2)$, $\bL_\by=\cO(\kappa_g)$ as shown in Lemma~\ref{lem:v(x)} and $\min\{1-\rho,1-\beta\} = \Omega(\frac{1}{\kappa_g})$ as shown in Lemma~\ref{lem:ell-es}. 
Next, we show that by selecting $\gamma=\log(K)/K$ we have that $(1-\gamma)^K\leq 1/K$. In fact, for any $x>0$, $\log(x)\geq 1-\frac{1}{x}$ which implies that $\log(\frac{1}{1-\gamma})\geq \gamma=\log(K)/K$, hence, $(\frac{1}{1-\gamma})^K\geq K$. Putting the pieces together we conclude that $\ell(\bx_K)-\ell(\bx^*) = \cO((1-\gamma)^K (\ell(\bx_0)-\ell(\bx^*)) + \gamma \kappa_g^5) = \tilde{\cO}(\kappa_g^5 / K)$, which leads to an iteration complexity of $\tilde{\cO}(\kappa_g^5\epsilon^{-1})$.

Furthermore, assuming that $\grad_y f(\bx,\cdot)$ is uniformly bounded for any $\bx\in\cX$, we conclude that $C_y^f=\cO(1)$, hence, $\bC_1=\cO(\kappa_g)$ from which we have that $\ell(\bx_K)-\ell(\bx^*)=\cO((1-\gamma)^K(\ell(\bx_0)-\ell(\bx^*))+\gamma\kappa_g^4)$. Therefore, selecting $\gamma=\log(K)/K$ implies that $\ell(\bx_K)-\ell(\bx^*)=\cO(\kappa_g^4/K)$ which leads to an iteration complexity of $\cO(\kappa_g^4\epsilon^{-1})$.
\qed     
 
\section{Proof of Theorem~\ref{thm:nonconvex-upper-bound}}\label{appen: nonconvex}
    Recall that from Lemma~\ref{lem:one-step} we have
    \begin{align*}
        \mathcal{G}(\bx_{k}) &\leq \frac{\ell(\bx_{k}) - \ell(\bx_{k+1})}{\gamma}+\bC_2  \beta^k D_0^y D_{\cX}+ \frac{ \gamma \bC_2 D_{\cX}^2       \bL_{\by} \beta}{1-\beta} + C_{yx}^{g}\Big[ D_{\cX}            \rho^{k+1} \|{\bw_0} - \bv(\bx_0) \|  \nonumber \\
        &\quad + \frac{\gamma D_{\cX}^2    \rho \bC_{\bv}}{1-\rho}+ \frac{ D_{\cX} D_0^y \bC_1\eta \rho^{k+2}}{\rho - \beta} + \frac{\gamma D_{\cX}^2 \bL_{\by} \bC_1 \beta \eta}{(1-\beta)(1-\rho)}\Big] + \frac{1}{2} \bL_{\ell} \gamma D_{\cX}^2. 
    \end{align*}
    Summing both sides of the above inequality from $k=0$ to $K-1$, we get
\begin{align*}
	\sum_{k=0}^{K-1}\mathcal{G}(\bx_k) &\leq \frac{\ell(\bx_0) -      \ell(\bx_K)}{\gamma}+ \frac{\bC_2 D_0^y D_{\cX}}{1- \beta}+       
        K \frac{\gamma \bC_2 D_{\cX}^2\bL_{\by} \beta}{1-\beta} + 
        C_{yx}^{g}\Big[ \frac{\rho D_{\cX} \|{\bw_0} - \bv(\bx_0) \|}{ 1-\rho}   \nonumber \\
        &\quad + K \frac{\gamma D_{\cX}^2    \rho \bC_{\bv}}{1-\rho} + \frac{ D_{\cX} D_0^y \bC_1\eta \rho^{2}}{(1-\rho)(\rho - \beta)} + K \frac{\gamma D_{\cX}^2 \bL_{\by} \bC_1 \beta \eta}{(1-\beta)(1-\rho)}\Big] + \frac{K}{2} \bL_{\ell} \gamma D_{\cX}^2,
\end{align*}
where in the above inequality we use the fact that $\sum_{i=0}^{K} \beta^{i}  \leq \frac{1}{1-\beta}$. Next, dividing both sides of the above inequality by $K$ and denoting the smallest gap function over the iterations from  $k=0$ to $K-1$, i.e., 
\begin{equation*}
	\mathcal{G}_{k^*} \triangleq  \min_{0\leq k \leq K-1}~\mathcal{G}(\bx_k) \leq \frac{1}{K} \sum_{k=0}^{K-1} \mathcal{G}(\bx_k), 
\end{equation*}
imply that
\begin{align}\label{eq:gap-bound-general}
    \mathcal{G}_{k^*} &\leq  \frac{\ell(\bx_0) -\ell(\bx_K)}{K \gamma}+ \frac{\gamma \bC_2 D_{\cX} \bL_{\by} \beta}{1-\beta}+ \frac{\gamma D_{\cX}^2 \rho \bC_{\bv} C_{yx}^{g} \rho}{1-\rho} +\frac{\gamma D_{\cX}^2 C_{yx}^{g} \bL_{\by} \bC_1 \beta \eta}{(1-\beta)(1-\rho)} + \frac{1}{2} \bL_{\ell}\gamma D_{\cX}^2 \nonumber \\
    &+ \frac{\bC_2  D_0^y D_{\cX} \beta}{K(1- \beta)}+ \frac{D_{\cX}  C_{yx}^{g} \rho \|{\bw_0} - \bv(\bx_0) \|}{K(1- \rho)}+ \frac{ D_{\cX} D_0^y C_{yx}^{g} \bC_1 \eta \rho^2}{K(\blue{\rho}-\beta)(1-\rho)}.
\end{align}
\qed 

\section{Proof of Corollary~\ref{cr:nonconvex_upper-bound}}\label{appen:nonconvex_coro}
    We begin the proof by using the result of the  Theorem~\ref{thm:nonconvex-upper-bound}. 
\begin{align*}
    \mathcal{G}_{k^*} &\leq  \frac{\ell(\bx_0) -\ell(\bx_K)}{K \gamma}+ \frac{\gamma \bC_2 D_{\cX} \bL_{\by} \beta}{1-\beta}+ \frac{\gamma D_{\cX}^2 \rho \bC_{\bv} C_{yx}^{g} \rho}{1-\rho} +\frac{\gamma D_{\cX}^2 C_{yx}^{g} \bL_{\by} \bC_1 \beta \eta}{(1-\beta)(1-\rho)} + \frac{1}{2} \bL_{\ell}\gamma D_{\cX}^2 \nonumber \\
    &+ \frac{\bC_2  D_0^y D_{\cX} \beta}{K(1- \beta)}+ \frac{D_{\cX}  C_{yx}^{g} \rho \|{\bw_0} - \bv(\bx_0) \|}{K(1- \rho)}+ \frac{ D_{\cX} D_0^y C_{yx}^{g} \bC_1 \eta \rho^2}{K(1-\beta)(1-\rho)}\\
    &=\cO\left(\frac{1}{K \gamma}+\frac{\gamma \bC_2 \bL_{\by} \beta}{1-\beta}+\frac{\gamma \bL_{\by} \bC_1 \beta}{(1-\beta)(1-\rho)}\right)
\end{align*}
The desired result follows immediately from \eqref{eq:gap-bound-general} and the fact that $\ell(\bx^*)\leq \ell(\bx_K)$. Moreover, similar to the proof of Corollary \ref{cr:convex_upper-bound} we have that $\mathbf{C}_{\mathbf{v}} = \cO(\kappa_g^3)$, $\bC_1=\cO(\kappa_g^2)$, $\bL_\by=\cO(\kappa_g)$, and $\min\{1-\rho,1-\beta\} = \Omega(\frac{1}{\kappa_g})$. 
Hence, by choosing $\gamma = 1/(\kappa_g^{2.5} \sqrt{K})$, we obtain that $\mathcal{G}_k^* = \cO(\frac{1}{{K}\gamma} + \gamma \kappa_g^5) = \cO(\kappa_g^{2.5} / \sqrt{K})$, which leads to an iteration complexity of $\cO(\kappa_g^5  \epsilon^{-2})$. 

Furthermore, assuming that $\grad_y f(x,y)$ is uniformly bounded, we conclude that $C_y^f=\cO(1)$, hence, $\bC_1=\cO(\kappa_g)$ from which we have that $\cG_{k^*}=\cO(\frac{1}{K\gamma}+\gamma\kappa_g^4)$. Therefore, selecting $\gamma=1/(\kappa_g^2\sqrt{K})$ implies that $\cG_{k^*}=\cO(\kappa_g^2/\sqrt{K})$ which leads to an iteration complexity of $\cO(\kappa_g^4\epsilon^{-2})$. \qed

\section{Additional Experiments}\label{sec:additional_example}
In this section, we provide more details about the experiments conducted in section \ref{sec:numeric} as well as some additional experiments. 

\subsection{Experiment Details}\label{appen:ex_detail}
 In this section, we include more details of the numerical experiments in Section~\ref{sec:numeric}. 
 The MATLAB code is also included in the supplementary material.   

For completeness, we briefly review the update rules of SBFW~\citet{akhtar2022projection} and TTSA~\citet{hong2020two} for the setting considered in problem~\eqref{eq:bilevel}. In the following, we use $\cP_{\cX}(\cdot)$ to denote the Euclidean projection onto the set $\cX$.

Each iteration of SBFW has the following updates:
\begin{align*}
    & \by_k = \by_{k-1} - \delta_k \grad_y g (\bx_{k-1}, \by_{k-1}),\\
    &\bd_k = (1- \rho_k)(\bd_{k-1} - h(\bx_{k-1}, \by_{k-1})) + h(\bx_k, \by_k),\\
    &\bs_k = \argmin_{\bs \in \cX} \langle \bs, \bd_k \rangle,\\
    &\bx_{k+1} = (1 - \eta_k) \bx_k + \eta_k \bs_k
\end{align*}
Based on the theoretical analysis in \citet{akhtar2022projection},  $\rho_k = \frac{2}{k^{1/2}}$, $\eta_k = \frac{2}{{(k+1)}^{3/4}}$, and $\delta_k = \frac{a_0}{k^{1/2}}$ where $a_0 = \min \Big\{ \frac{2}{3 \mu_g} , \frac{\mu_g}{2 L_g^2}\Big\}$. 
Moreover, $h(\bx_k, \by_k)$ is a biased estimator of the surrogate $\ell(\bx_k)$ 
which can be computed as follows
\begin{align*}
    h(\bx_k, \by_k) = \nabla_{x}f(\bx_k,\by_k) - M(\bx_k, \by_k) \nabla_{y}f(\bx_k,\by_k),
\end{align*}
where the term $ M(\bx_k, \by_k)$ is a biased estimation of $[\nabla^2_{yy}g(\bx_k,\by_k)]^{-1} $ with bounded variance whose explicit form is 
\begin{align*}
    M(\bx_k, \by_k) = \nabla^2_{yx}g(\bx_k,\by_k) \times \Big[ \frac{k}{L
    _g} \Pi_{i=1}^{l}\Big( I - \frac{1}{L_g}\grad^2_{yy} g(\bx_k, \by_k)\Big) \Big],
\end{align*}
and $l\in \{1,\hdots, k\}$ is an integer selected uniformly at random.  

The steps of TTSA algorithm are given by 
\begin{align*}
    &\by_{k+1} = \by_k - \beta h_k ^ g,\\
    &\bx_{k+1} = \cP_{\cX} ( \bx_k - \alpha h_k ^f),\\
    &h_k^g =\grad_y g (\bx_k, \by_k),\\
    &h_k^f =\nabla_{x}f(\bx_k,\by_k) - \nabla^2_{yx}g(\bx_k,\by_k) \times \Big[ \frac{t_{max}(k) c_h}{L_g} \Pi_{i=1}^{p}\Big( I - \frac{c_h}{L_g}\grad^2_{yy} g (\bx_k, \by_k)\Big) \Big] \nabla_{y}f(\bx_k,\by_k),
\end{align*}
where based on the theory we define  $L = L_x^f + \frac{L_y^f C_{yx}^g}{\mu_g} + C_y^f \Big( \frac{L_{yx}^g}{\mu_g} + \frac{L_{yy}^g C_{yx}^g}{\mu_g^2}\Big)$, and $L_y = \frac{C_{yx}^{g}}{\mu_g}$, then set  $\alpha = \min\Big\{ \frac{\mu_g^2}{8 L_y L L_g^2}, \frac{1}{4 L_y L} K^{-3/5} \Big\}$, $\beta = \min\Big\{ \frac{\mu_g}{ L_g^2}, \frac{2}{\mu_g} K^{-2/5} \Big\}$, $t_{max}(k) = \frac{L_g}{\mu_g}\log(k+1)$, $p \in \{0, \hdots, t_{max}(k)-1\}$, and $c_h \in (0,1]$.

\subsection{Toy Example}\label{app:toy}
  
Here we consider a variation of coreset problem in a two-dimensional space to illustrate the numerical stability of our proposed method.  
Given a point $x_0\in\reals^2$, the goal is to find the closest point to $x_0$ such that under a linear map it lies within the convex hull of given points $\{x_1,x_2,x_3,x_4\}\subset \reals^2$. Let $A\in\reals^{2\times 2}$ represents the linear map, $X \triangleq [x_1, x_2, x_3, x_4] \in \reals^{2 \times 4}$, and $\Delta_{4} \triangleq \{ \lambda \in \reals^4 | \langle \lambda, 1\rangle = 1 , \lambda \geq 0 \} $ be the standard simplex set. 
This problem can be formulated as the following bilevel optimization problem
\begin{align}\label{ex:toy}
    \min_{\lambda \in \Delta_{4}} \frac{1}{2}\| \theta(\lambda) - x_0\|^2 \quad  \hbox{s.t.}\quad  \theta(\lambda) \in \argmin_{\theta \in \reals^{2}}  \frac{1}{2}\| A \theta - X\lambda \|^2. 
\end{align}

We set the target $x_0 = (2,2)$ and 
 choose starting points as $\theta_0 = (0,0)$ and $\lambda_0 = \mathbf{1}_4/4$. 
 We implemented our proposed method and compared it with SBFW \cite{akhtar2022projection}. It should be noted that in the SBFW method, they used a biased estimation for $[\nabla^2_{yy}g(\lambda,\theta)]^{-1}=(A^\top A)^{-1}$ whose bias is upper bounded by $\frac{2}{\mu_g}$ (see Lemma 3.2 in \cite{ghadimi2018approximation}). Figure~\ref{fig:toy1} illustrates the
iteration trajectories of both methods for $\mu_g = 1$ and $K=10^2$. The step-sizes for both methods are selected as suggested by their theoretical analysis. We observe that our method converges to the optimal solution while SBFW fails to converge. 
This situation for SBFW exacerbates for smaller values of $\mu_g$. 

 \begin{figure}
     \centering
     \includegraphics[scale=0.35]{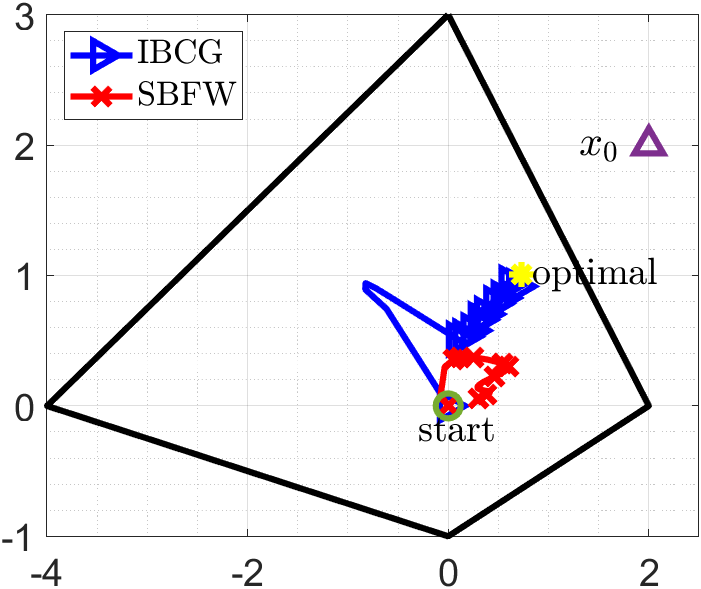}
     \quad
     \includegraphics[scale=0.35]{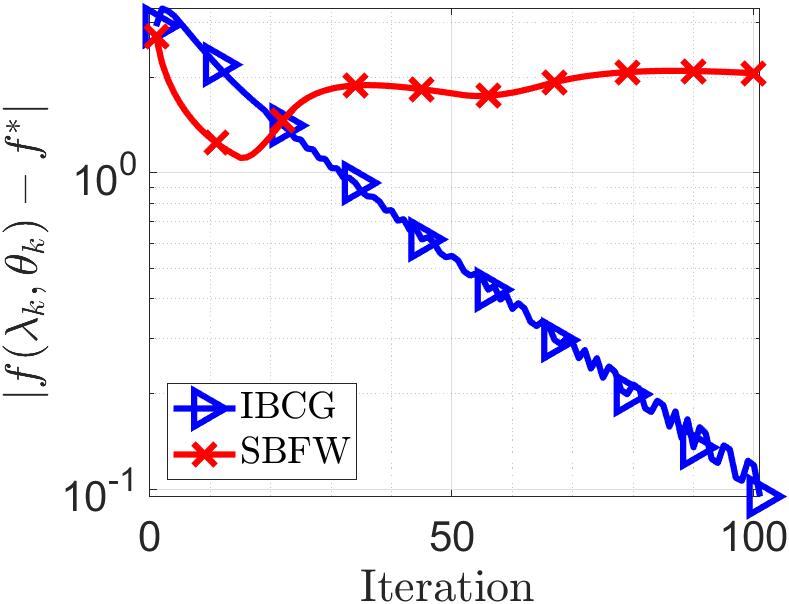}
     \caption{The performance of IBCG (blue) vs SBFW (red) on Problem \eqref{ex:toy} when $\mu_g =1$. Plots from left to right are trajectories of $\theta_k$ 
     and $f(\lambda_k,\theta_k)- f^*$.}
     \label{fig:toy1}
 \end{figure}

Fig.~\ref{fig:toy2} illustrates the
iteration trajectories of both methods for $\mu_g = 0.1$ and $K=10^3$ in which we also included SBFW method whose Hessian inverse matrix is explicitly provided in the algorithm. The step-sizes for both methods are selected as suggested by their theoretical analysis. 
Despite incorporating the Hessian inverse matrix in the SBFW method, the algorithm's effectiveness is compromised by excessively conservative step-sizes, as dictated by the theoretical result. Consequently, the algorithm fails to converge to the optimal point effectively. Regarding this issue, we tune their step-sizes, i.e., scale the parameter $\delta$ and $\eta$ in their method by a factor of 5 and 0.1, respectively. By tuning the parameters we can see in Fig.~\ref{fig:toy3} that the SBFW with Hessian inverse matrix algorithm has a better performance and converges to the optimal solution. In fact, using the Hessian inverse as well as tuning the step-sizes their method converges to the optimal solution while our method always shows a consistent and robust behavior.

 \begin{figure}
     \centering
     \includegraphics[scale=0.35]{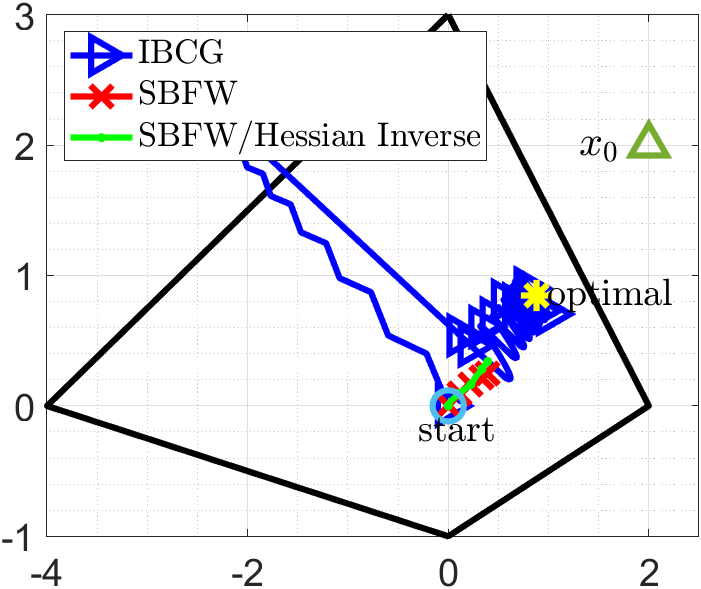}
     \quad
     \includegraphics[scale=0.35]{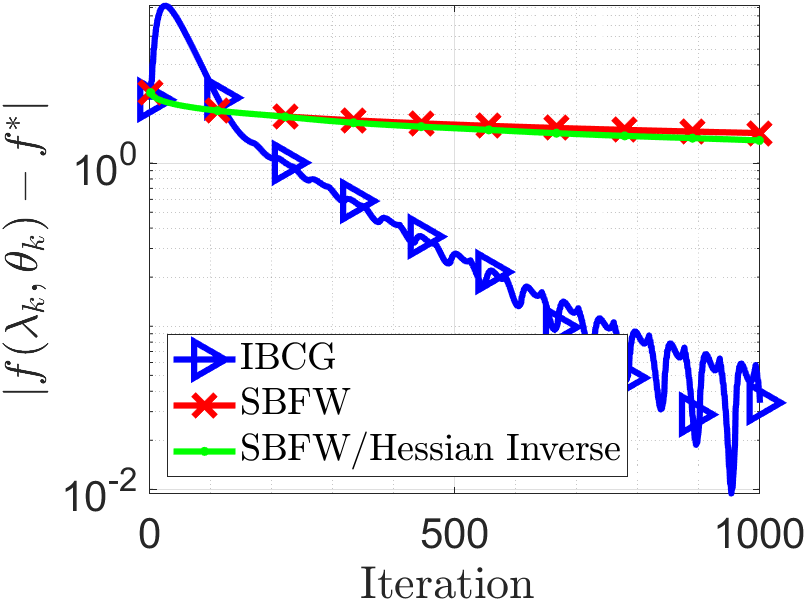}
     \caption{The performance of IBCG (blue) vs SBFW (red) and  SBFW with Hessian inverse (green) on Problem \eqref{ex:toy} when $\mu_g = 0.1$. Plots from left to right are trajectories of $\theta_k$ 
     and $f(\lambda_k,\theta_k)- f^*$.}
     \label{fig:toy2}
 \end{figure}

 \begin{figure}
     \centering
     \includegraphics[scale=0.35]{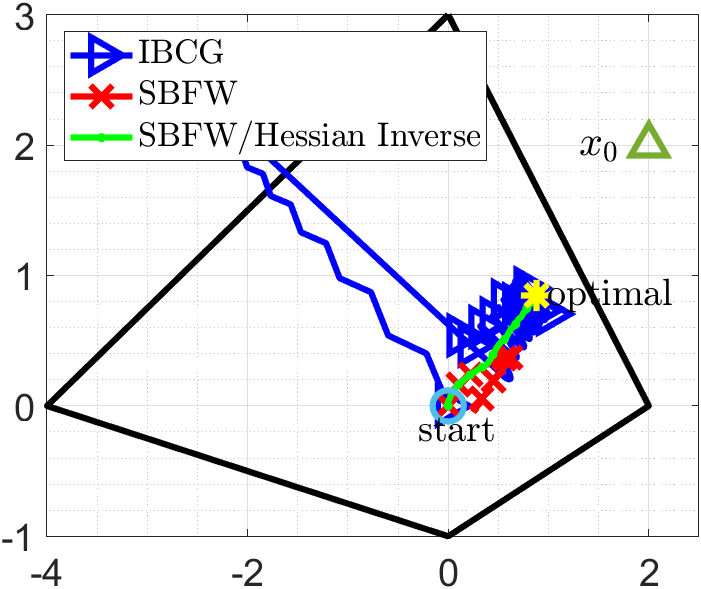}
     \quad
     \includegraphics[scale=0.35]{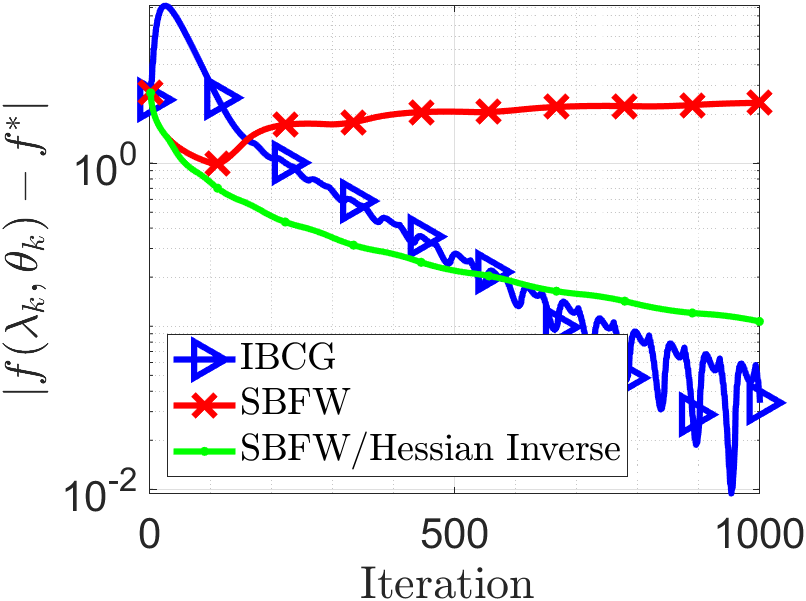}
     \caption{The performance of IBCG (blue) vs SBFW (red) and  SBFW with Hessian inverse (green) on Problem \eqref{ex:toy} when $\mu_g = 0.1$ and the SBFW parameters are tuned. Plots from left to right are trajectories of $\theta_k$ 
     and $f(\lambda_k,\theta_k)- f^*$.}
     \label{fig:toy3}
 \end{figure}

\subsection{Matrix Completion with Denoising}\label{appen: matrix_comp}
\subsubsection{Synthetic dataset}

\noindent\textbf{Dataset Generation.} 
We create an observation matrix $M = \hat{X}+E$. In this setting $\hat{X} = WW^{T}$  where $W \in \reals^{n \times r}$ containing normally distributed independent entries, and $E = \hat{n}(L+L^{T})$ is a noise matrix where $L \in \reals^{n \times n}$ containing normally distributed independent entries and $\hat{n} \in (0,1)$ is the noise factor. During the simulation process, we set $n= 250$, $r = 10$, and $\alpha = \| \hat{X}\|_{*}$. 

\noindent\textbf{Initialization.} All the methods start from the same initial point $\bx_0$ and $\by_0$ which are generated randomly. We terminate the algorithms either when the maximum number of iterations $K_\mathrm{max}=10^4$ or the maximum time limit $T_\mathrm{max} = 2 \times 10^2$ seconds are achieved.  

\noindent\textbf{Implementation Details.} For our method IBCG, we choose the step-sizes as $\gamma = \frac{1}{4\sqrt{K}}$ to avoid instability due to large initial step-sizes, and set $\alpha=2/(\mu_g+L_g)$ and $\eta=0.9\times\frac{1-\beta}{\mu_g}$.  We tuned the step-size $\eta_k$ in the SBFW method by multiplying it by a factor of 0.8, and for the TTSA method, we tuned their step-size $\beta$ by multiplying it by a factor of 0.25.

 \vspace{2mm}
\subsubsection{Real dataset}
In order to emphasize the importance of projection-free bilevel algorithms in practical applications, we conducted further experiments using a larger dataset known as MovieLens 1M. This dataset consists of 1 million ratings provided by 6000 individuals for a total of 4000 movies. 
In Figure~\ref{fig:matrixcom_1M} the inferior performance of TTSA algorithm in actual computation time, especially when dealing with large datasets becomes more evident. The observed difference can be attributed to the utilization of the projection operation in contrast to the projection-free algorithms. 
TTSA requires performing projections over nuclear norm at each iteration which is computationally expensive due to the
computation of full singular value decomposition. In contrast, projection-free algorithms IBCG and SBFW solve a linear minimization at each
iteration, which only requires the computation of singular vectors corresponding to the largest singular value.
On the other hand, considering the slow convergence rate of SBFW, when the size of the dataset increases, the improved performance of our proposed method becomes more evident compared to SBFW.
\begin{figure}
     \centering
     \includegraphics[scale=0.32]{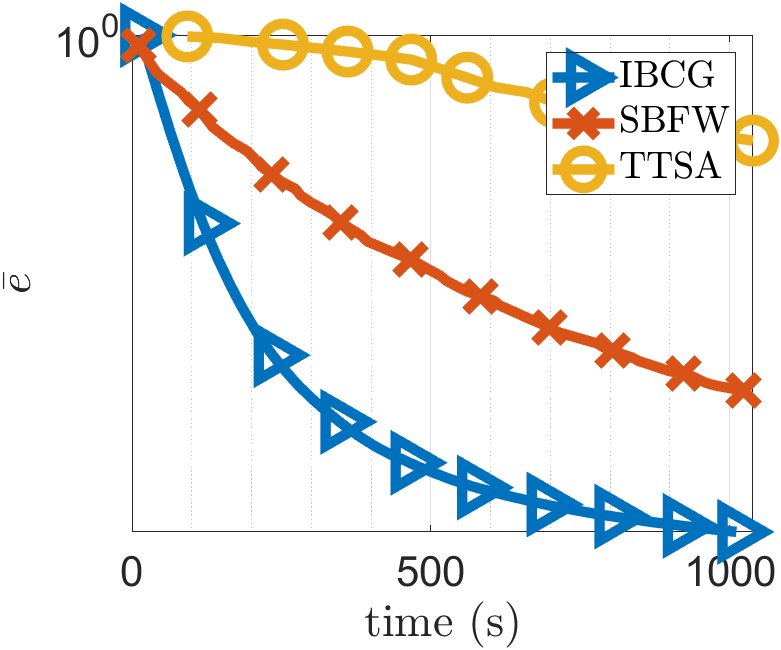}
     \quad
     \includegraphics[scale=0.32]{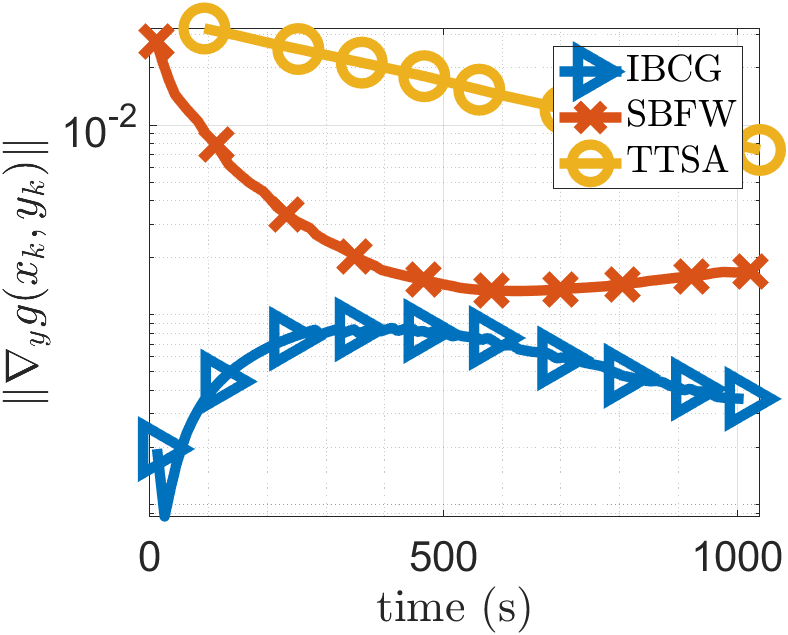}
     \quad
     \includegraphics[scale=0.32]{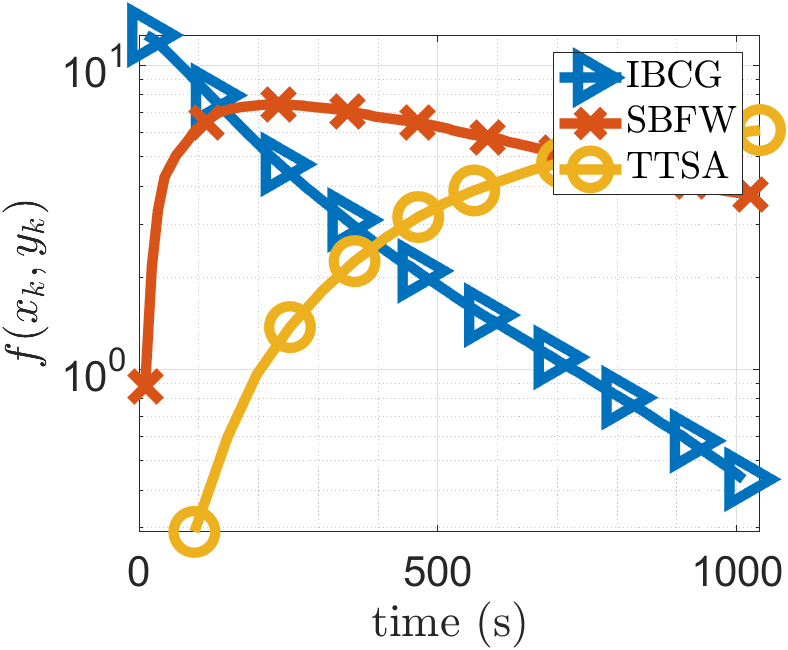}
     \caption{The performance of IBCG (blue) vs SBFW (red) and  TTSA (yellow) on Problem \eqref{ex:Matrix_comp} for real dataset. Plots from left to right are trajectories of normalized error $(\Bar{e})$, 
      $\|\nabla_y g(\bx_k,\by_k)\|$, 
     and $f(\bx_k,\by_k)$ over time.}
     \label{fig:matrixcom_1M}
 \end{figure}

 Moreover, in the following, we utilized the MovieLens 100k dataset to implement matrix completion with denoising examples for different step-sizes. These experiments will be designed to explore how different step-size selections impact the performance of the IBCG algorithm. 
 We fix the step-sizes $\alpha=2/(\mu_g+L_g)$ and $\eta=0.5\times\frac{1-\beta}{\mu_g}$ and systematically alter $\gamma = c_1 \times \frac{1}{\sqrt{K}}$  with constants $c_1 \in \{ 0.75, 0.5, 0.25, 0.1\}$ as depicted in Fig.~\ref{fig:matrixcom_etafixed}. 
 We observe that larger values of $\gamma$ directly affect the performance of the algorithm. 
 This observation matches with our theoretical result as demonstrated in Lemma \ref{lem:nu_est_K}. In particular, the error of approximating the lower-level solution and its Jacobian is directly related to the step-size $\gamma$ and larger values of $\gamma$ contributing to larger errors affecting the upper-level objective value.

\begin{figure}
     \centering
     \includegraphics[scale=0.32]{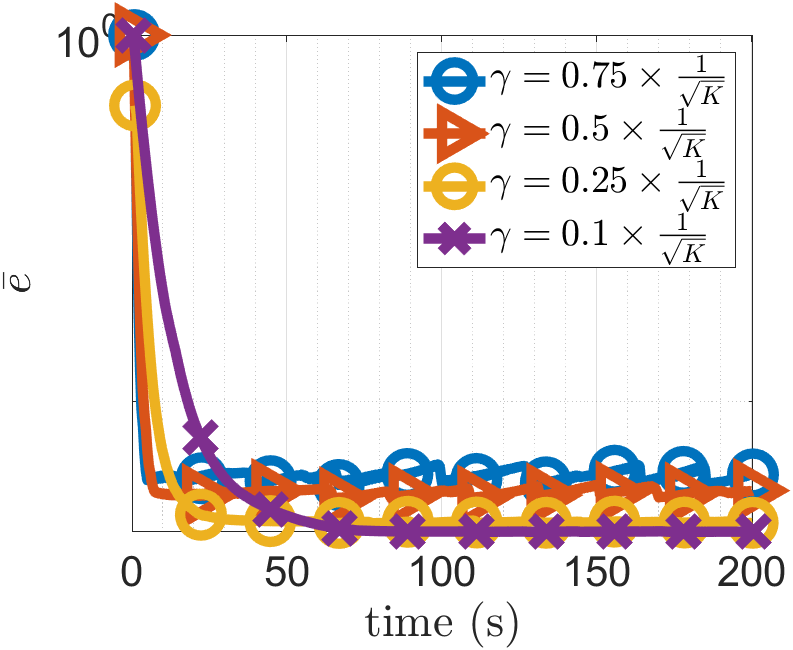}
     \quad
     \includegraphics[scale=0.32]{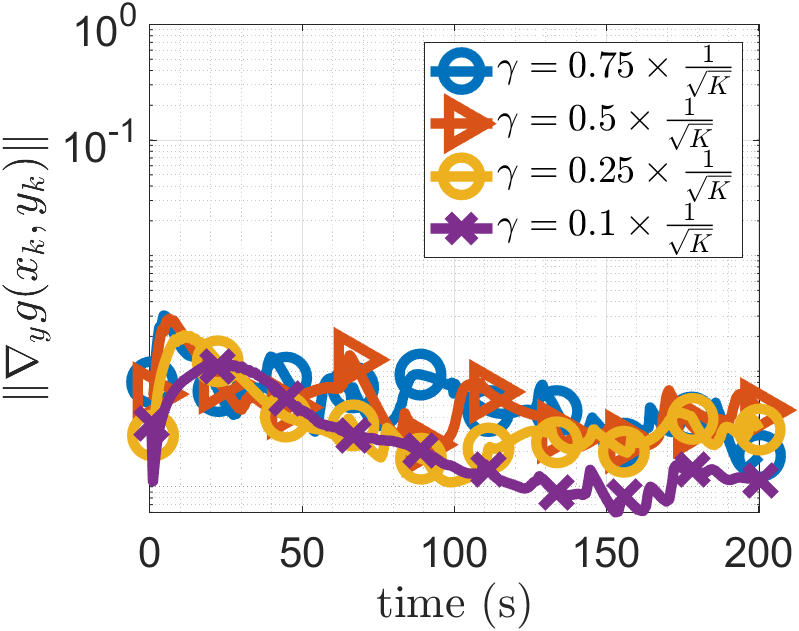}
     \quad
     \includegraphics[scale=0.32]{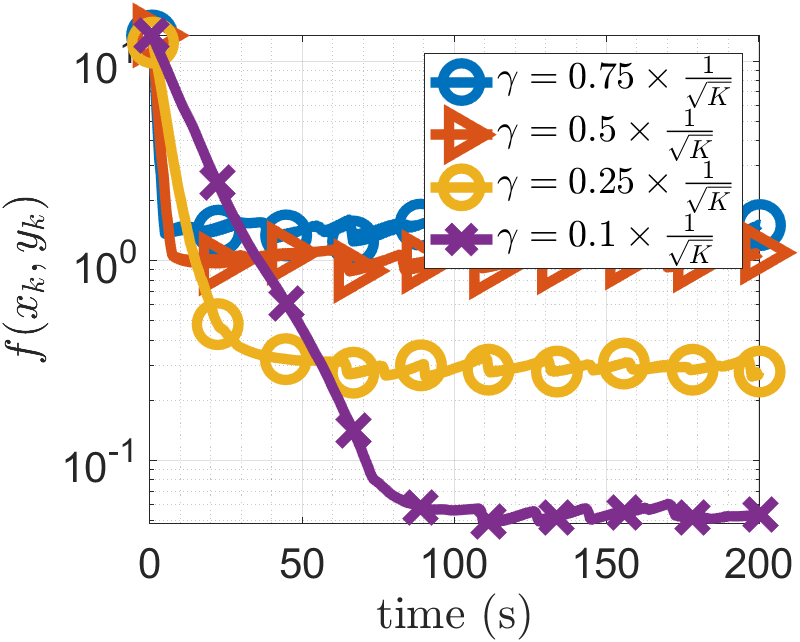}
     \caption{The performance of IBCG on Problem \eqref{ex:Matrix_comp} for fixed step-sizes $\eta$ and $\alpha$.  Plots from left to right are trajectories of normalized error $(\Bar{e})$, 
      $\|\nabla_y g(\bx_k,\by_k)\|$, 
     and $f(\bx_k,\by_k)$ over time.}
     \label{fig:matrixcom_etafixed}
 \end{figure}

\begin{figure}
     \centering
     \includegraphics[scale=0.32]{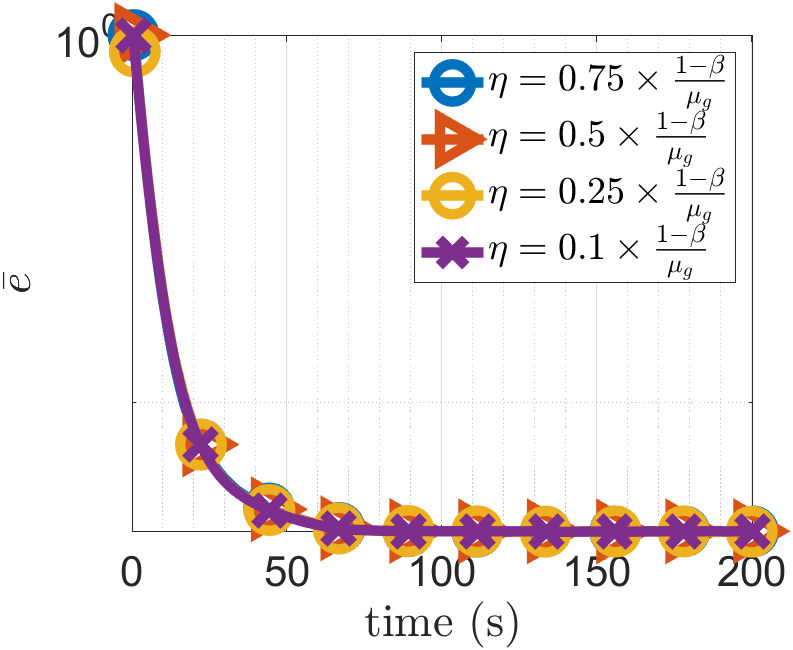}
     \quad
     \includegraphics[scale=0.32]{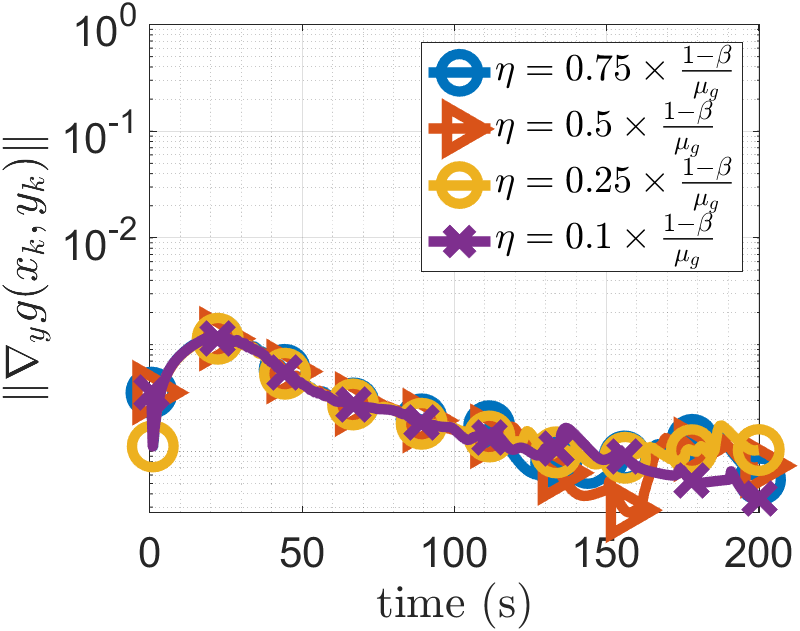}
     \quad
     \includegraphics[scale=0.32]{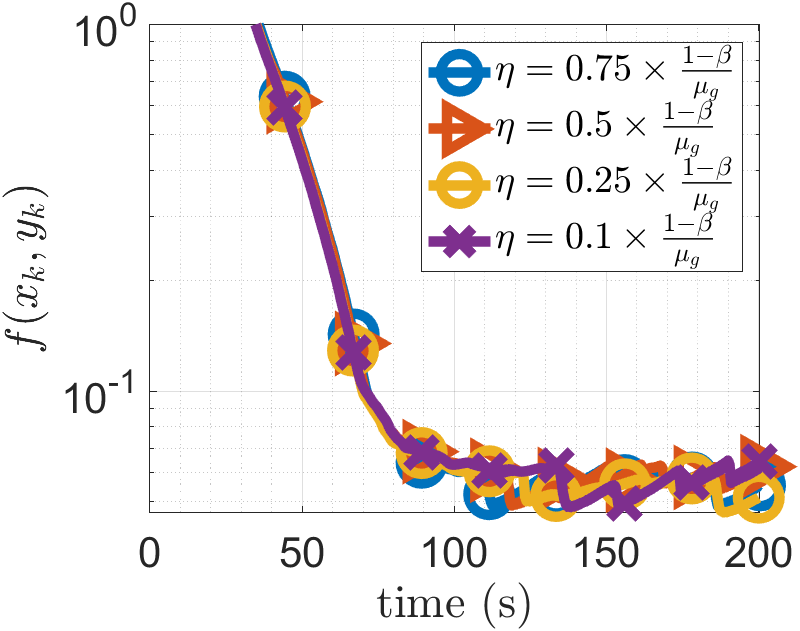}
     \caption{The performance of IBCG on Problem \eqref{ex:Matrix_comp} for fixed step-sizes $\gamma$ and $\alpha$. Plots from left to right are trajectories of normalized error $(\Bar{e})$, 
      $\|\nabla_y g(\bx_k,\by_k)\|$, 
     and $f(\bx_k,\by_k)$ over time.}
     \label{fig:matrixcom_gammafixed}
 \end{figure}
 
In Fig.~\ref{fig:matrixcom_gammafixed}, we fixed the step-sizes $\alpha=2/(\mu_g+L_g)$ and $\gamma = 0.1 \times \frac{1}{\sqrt{K}}$ and changed the value of step-size $\eta= c_2 \times\frac{1-\beta}{\mu_g}$ with constants $c_2 \in \{ 0.75, 0.5, 0.25, 0.1\}$. The performance of the IBCG is robust due to the various values of step-size $\eta$. This indicates that the choice of $\eta$ does not significantly affect the convergence rate, suggesting that the IBCG method is not overly sensitive to this parameter within the tested range. The algorithm achieves comparable accuracy levels in the end, regardless of the initial choice of $c_2$, signifying a level of stability that can be beneficial in practical applications where the optimal step-size may not be known at first.

\subsection{Multi-Task Learning}\label{appen:MTL}

\noindent\textbf{Dataset 1.} In this experiment, we consider a collection of datasets: \texttt{bodyfat}, \texttt{housing}, \texttt{mg}, \texttt{mpg}, and \texttt{space-ga} from LibSVM library \cite{CC01a}. We consider each dataset as distinct data points for each task. To standardize the feature space across these datasets, we extend the number of features in each by adding columns of zeros to the datasets that originally had fewer features. Furthermore, all datasets are scaled to fit within the range of $[-1, 1]$.  

\noindent\textbf{Initialization.} For both methods we choose $\lambda_1 = \lambda_2 = 0.1$ and start from the same initial point $\Omega_0$ and $W_0$. We terminate the algorithms either when the maximum number of iterations $K_\mathrm{max}=10^4$ or the maximum time limit $T_\mathrm{max} = 100$ seconds are achieved.

\noindent\textbf{Implementation Details.} For our method IBCG, we choose the step-sizes as $\gamma = \frac{1}{10^{4} \times \sqrt{K}}$, and set $\eta=0.9\times\frac{1-\beta}{\mu_g}$.  We tuned the step-sizes $\eta_k,\gamma_k, \rho_k$ in the SBFW method by multiplying all by a factor of $10^{3}$. Moreover, we scale the parameter $a_0$ by multiplying it by a factor of 100. Empirically, we observe that these modifications lead to faster convergence in SBFW.

\begin{table}[]\scriptsize
\centering
\begin{tabular}{ccccccc}
\hline
\multirow{2}{*}{\textbf{Model}} & \multicolumn{5}{c}{\textbf{Dataset1}}           & \multirow{2}{*}{\textbf{Average}} \\ \cline{2-6}
                                & bodyfat & housing & mg     & mpg    & space-ga &                                   \\ \hline
\textbf{Individual regression}  & 0.1771  & 7.6547  & 0.6333 & 1.7412 & 0.5232   & 2.1459                            \\ \hline
\textbf{MTL\_IBCG}              & 0.0599  & 1.1630  & 0.0480 & 3.2205 & 0.0245   & 0.9032                           \\ \hline
\textbf{MTL\_SBFW}              & 0.0783  & 0.9675  & 0.0540 & 3.4654 & 0.0253   & 0.9181                            \\ \hline
\end{tabular}
\caption{Test error of applying Ridge regression on each dataset vs test error of IBCG and SBFW methods for multi-task learning problem presented in \eqref{ex:MTL_bi}}
\label{tab:MTL_err1} 
\end{table}

Furthermore, we compare the performance of the MTL model considered in \eqref{ex:MTL_bi} over the test dataset after training with IBCG and SBFW algorithms and compared with the performance of each dataset individually trained by a ridge regression (RR) model. The results of the test error are depicted in Table~\ref{tab:MTL_err1}. The considered bilevel MTL model improves the test accuracy for 4 out of 5 datasets considered. In fact, the average test error of RR models is 2.1459 while our method achieves 0.9032. Moreover, comparing the performance of our proposed method with SBFW we observe that the test error for IBCG is lower in most of the datasets with an average of 0.9032 while SBFW leads to an average test error of 0.9181. 

\noindent\textbf{Dataset 2.} We consider the 1979 National Longitudinal Survey of Youth (NLSY79)\footnote{\href{https://www.bls.gov/nls/nlsy79.htm}{NLSY79 Data Overview}} dataset which consists of a data matrix $\bA\in \reals^{{n \times d}}$ {with $n = 6213$ instances and $d=19$ attributes}
and an outcome vector $\bb\in \reals^{n}$. We assign 60\% of the dataset as the training set $(\bA^{\mathrm{tr}},\bb^{\mathrm{tr}})$, 20\% as the validation set $(\bA^{\mathrm{val}},\bb^{\mathrm{val}})$ and the rest as the test set $(\bA^{\mathrm{test}},\bb^{\mathrm{test}})$. To align the dataset with the structure of our problem, we partition the data matrix \( \mathbf{A} \in \mathbb{R}^{n \times d} \) and the associated response vector \( \mathbf{b} \in \mathbb{R}^{n} \) into \( T =10\) distinct segments. This ensures that each task is allocated \( n_i \) data points i.e $\bA_i \in \reals^{{n_i \times d}}, \mathbf{b_i} \in \mathbb{R}^{n_i}$, with the cumulative total across all tasks \( \sum_{i=1}^T n_i \) equating to \( n \).

\noindent\textbf{Initialization.} For both methods we choose $\lambda_1 = \lambda_2 = 0.1$ and start from the same initial point $\Omega_0$ and $W_0$. We terminate the algorithms either when the maximum number of iterations $K_\mathrm{max}=10^4$ or the maximum time limit $T_\mathrm{max} = 100$ seconds are achieved.

\noindent\textbf{Implementation Details.} For our method IBCG, we choose the step-sizes as $\gamma = \frac{1}{10^{8} \times \sqrt{K}}$ to avoid instability due to large initial step-sizes, and set $\alpha= \frac{2}{(\mu_g+L_g)}$ and $\eta=0.9\times\frac{1-\beta}{\mu_g}$.  We tuned the step-sizes $\eta_k,\gamma_k, \rho_k$ in the SBFW method by multiplying all by a factor of 100. Moreover, we scale the parameter $a_0$ by multiplying it by a factor of 10. Empirically, we observe that these modifications lead to a better performance of SBFW in terms of both gradient norm of the lower-level problem and upper-level objective function value.

\begin{figure*}[ht!]
     \centering
     \includegraphics[scale=0.2]{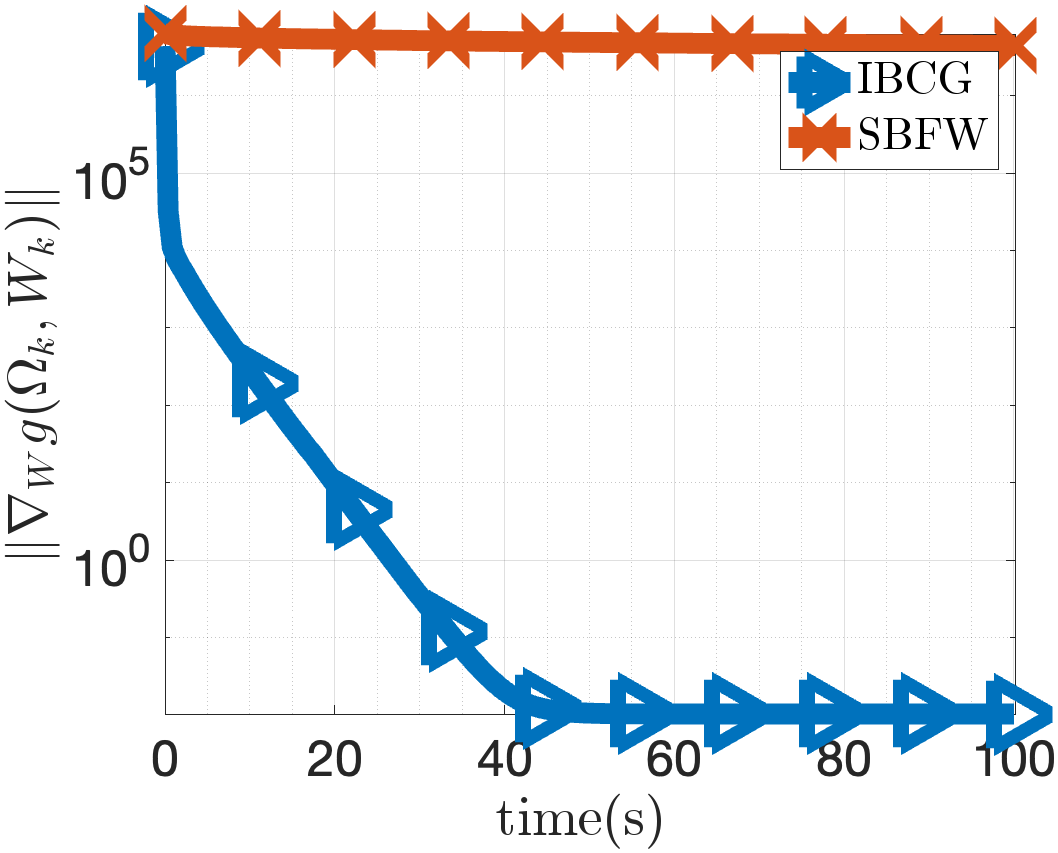}
     \includegraphics[scale=0.2]{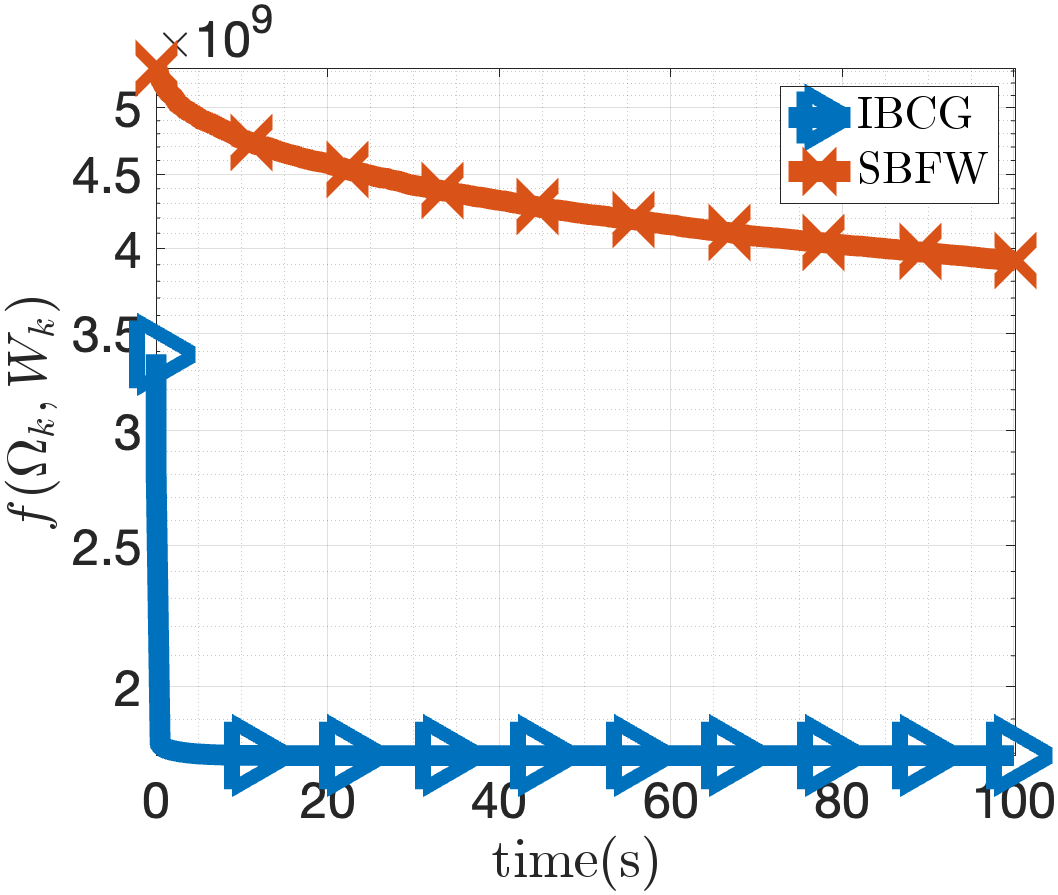}
     \qquad
     \includegraphics[scale=0.2]{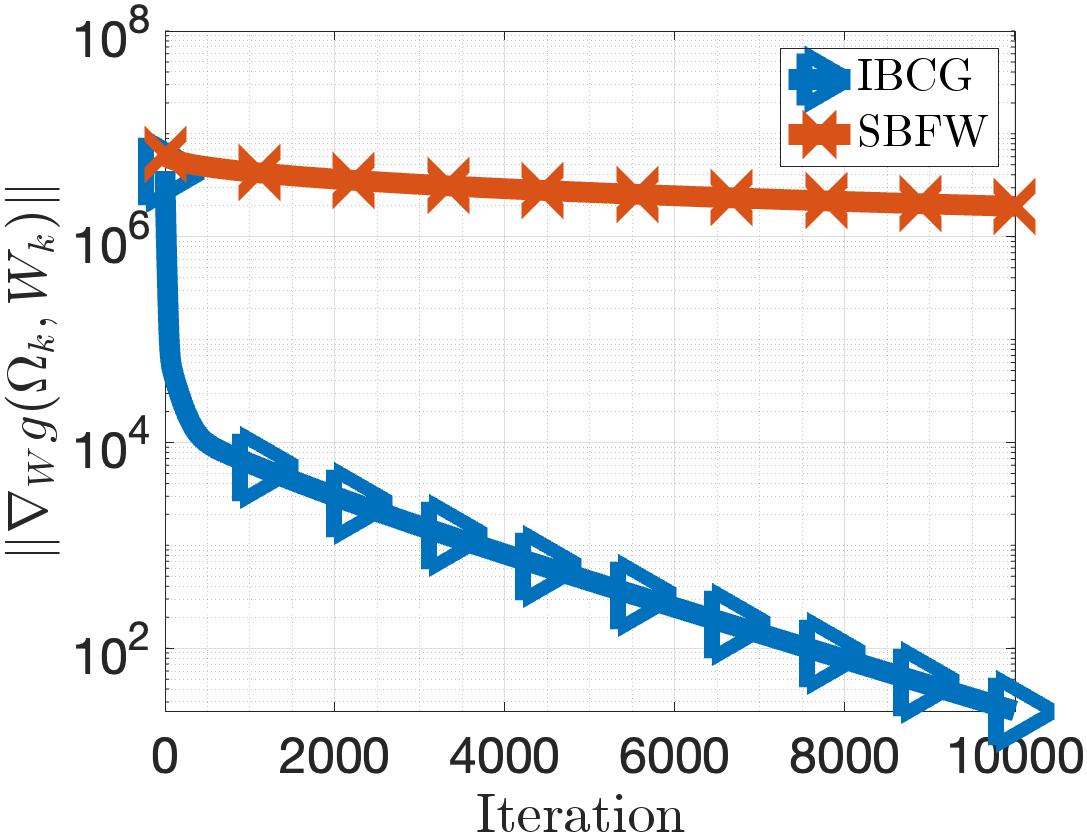}
     \includegraphics[scale=0.2]{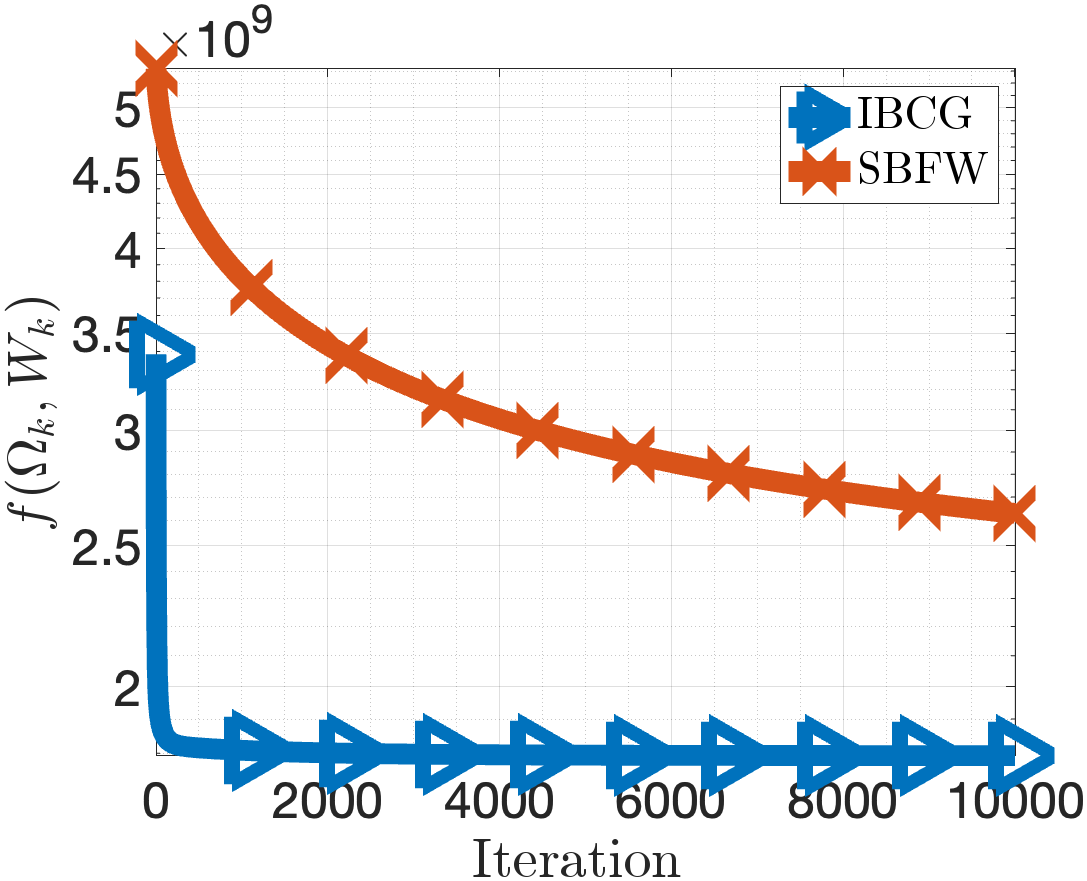}
     \vspace{-2mm}
     \caption{Performance of IBCG  vs SBFW on problem \eqref{ex:MTL_bi} for real dataset2. Plots from left to right: 
      $\|\nabla_W g(\Omega_k,W_k)\|$, 
     and $f(\Omega_k,W_k)$ over time and fixed number of iterations, respectively.}
     \label{fig:MTL_1}
     \vspace{-3mm}
 \end{figure*}

The performance comparison between our IBCG method and the SBFW method for problem~\eqref{ex:MTL_bi} on real dataset 2 is depicted in Fig.~\ref{fig:MTL_1}. In terms of lower-level gradient norm, IBCG shows a rapid initial decrease and then, indicates a fast convergence rate, however, SBFW exhibits a slower descent. The upper-level function value plots further corroborate these findings, with IBCG achieving a substantially lower function value quickly, demonstrating its superior performance. 

Similar to the experiment for Dataset1, we compare the performance of the MTL model in \eqref{ex:MTL_bi} over the test dataset after training with IBCG and SBFW algorithms and compared with the performance of each partition individually trained by an RR model. The results of the test error are depicted in Table~\ref{tab:MTL_err2}. The average test error of RR models is 7.3027 while our method achieves 1.1830. Moreover, comparing the performance of our proposed method with SBFW we observe that the test error for IBCG is lower in all of the datasets with an average of 1.1830 while SBFW leads to an average test error of 1.7759.


 \begin{table}[]\scriptsize
\centering
\begin{tabular}{cccccccccccc}
\hline
\multirow{2}{*}{\textbf{Model}} & \multicolumn{10}{c}{\textbf{Dataset2}}                                                   & \multirow{2}{*}{\textbf{Average}} \\ \cline{2-11}
                                & 1      & 2      & 3      & 4      & 5      & 6      & 7      & 8      & 9      & 10     &                                   \\ \hline
\textbf{Individual regression}  & 5.620 & 5.852 & 14.358 & 2.993 & 6.910 & 13.590 & 14.463 & 2.444 & 3.804 & 2.993 & 7.3027                            \\ \hline
\textbf{MTL\_IBCG}              & 1.2853 & 1.0021 & 1.1868 & 1.1131 & 1.4108 & 0.9580 & 1.1280 & 1.1849 & 1.3763 & 1.1845 & 1.1830                            \\ \hline
\textbf{MTL\_SBFW}              & 1.7794 & 1.6111 & 1.8025 & 1.5937 & 1.9606 & 1.4405 & 1.7451 & 1.8579 & 2.0640 & 1.9039 & 1.7759                            \\ \hline
\end{tabular}
\caption{Test error of applying Ridge regression on each dataset vs test error of IBCG and SBFW methods for multi-task learning problem presented in \eqref{ex:MTL_bi}}
\label{tab:MTL_err2}
\end{table}
\end{document}